\newtheorem{teo}{Theorem}[section]
\newtheorem{lemma}[teo]{Lemma}
\newtheorem{prop}[teo]{Proposition}
\theoremstyle{definition}
\newtheorem{defi}[teo]{Definition}
\newtheorem{definition}[teo]{Definition}
\newtheorem{remark}[teo]{Remark}
\newcommand{\R}{\mathbb R}
\newcommand{\C}{\mathbb C}
\newcommand{\B}{\mathbb B}
\newcommand{\D}{\mathbb D}
\newcommand{\N}{\mathbb N}
\newcommand{\HH}{\mathbb H}
\def\v{\varphi}
\theoremstyle{remark} 
\numberwithin{equation}{section}
\begin{document}

\title[Models via rescaling]{Canonical models on strongly convex domains\\ via the squeezing function}

\author[A. Altavilla]{Amedeo Altavilla$^\dag$}\address{A. Altavilla: Dipartimento di Matematica, Universit\`a degli Studi di Bari Aldo Moro, Via E. Orabona 4, 70125 Bari, Italy}\email{amedeo.altavilla@uniba.it}
\author[L. Arosio]{Leandro Arosio$^\ddag$}
\address{L. Arosio: Dipartimento Di Matematica, Universit\`a di Roma "Tor Vergata", Via Della Ricerca Scientifica 1, 00133, Roma, Italy} 
\email{arosio@mat.uniroma2.it}
\author[L. Guerini]{Lorenzo Guerini}
\address{L. Guerini: Korteweg de Vries Institute for Mathematics, University of Amsterdam, Science Park 107, 1090GE Amsterdam, the Netherlands}
\email{lorenzo.guerini92@gmail.com}

\thanks{$^{\dag}$
GNSAGA of INdAM; supported by the SIR grant {\sl ``NEWHOLITE - New methods in holomorphic iteration''} n. RBSI14CFME and SIR grant {\sl AnHyC - Analytic aspects in complex and hypercomplex geometry} n. RBSI14DYEB}
\thanks{$^{\ddag}$  Supported by the SIR grant {\sl``NEWHOLITE - New methods in holomorphic iteration''} no. RBSI14CFME. Partially supported by the MIUR Excellence Department Project awarded to the Department of Mathematics, University of Rome Tor Vergata, CUP E83C18000100006.}
\subjclass[2010]{Primary 32H50; Secondary 32A40, 32T15, 37F99}
\keywords{Strongly convex domains, iteration theory, squeezing function, canonical models}
\date{\today}

\begin{abstract} 
We prove that if a holomorphic self-map $f\colon \Omega\to \Omega$ of a bounded  strongly convex domain  $\Omega\subset \C^q$ with smooth boundary is  hyperbolic then it admits a natural semi-conjugacy with a hyperbolic automorphism of a possibly lower dimensional ball $\B^k$. We also obtain the dual result for a holomorphic self-map $f\colon \Omega\to \Omega$ with a boundary repelling fixed point.
Both results are obtained by rescaling the dynamics of $f$ via the squeezing function.
\end{abstract}
\maketitle
{\small\tableofcontents}
\section{Introduction}
%
When  studying the dynamics of a holomorphic self-map $f$ of the unit ball $\B^q\subset \C^q$,
an important role is played  by  fixed points at the boundary, where the map is not necessarily continuous.
A point $\zeta\in \partial \B^q$ is a {\sl boundary regular fixed point}  if
\begin{enumerate}
\item  for every sequence $(z_n)$ converging to $\zeta$ inside a Koranyi region
$$K(\zeta,M):=\left\{z\in \B ^q\,\Big|\,\frac{|1-\langle z,\zeta\rangle|}{1-\Vert z\Vert}<M\right\},$$
where $M>1$, we have that $f(z_n)$ converges to $\zeta$, and
\item   the {\sl dilation} $\lambda_\zeta$ defined as
\begin{equation}\label{dilationball}
\lambda_\zeta:=\liminf_{z\to \zeta}\frac{1-\|f(z)\|}{1-\|z\|},
\end{equation}
is finite. If $\lambda_\zeta>1$ the point $\zeta$ is called a {\sl boundary repelling fixed point}.
\end{enumerate}

%
%
%

The classical Denjoy--Wolff Theorem illustrates the relevance of this notion.
\begin{teo}
 Let $f\colon \B^q\to \B^q$ be a holomorphic self-map without interior fixed points. Then 
there exists a boundary regular fixed point $\xi\in \partial \B^q$ with dilation  
$0<\lambda_\xi\leq 1$, called the {\sl Denjoy--Wolff point}, such that  the sequence of iterates $(f^n)$ converges to $\xi$.
\end{teo}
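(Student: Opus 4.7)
The plan is to prove the Denjoy--Wolff theorem through Wolff's classical lemma on the invariance of horospheres. First, the family $\{f^n\}$ is normal on $\B^q$ by Montel's theorem, so every subsequence admits a further convergent subsequence $f^{n_k}\to g$. Using the Kobayashi contraction together with the strict convexity of $\partial \B^q$, one verifies that either $g(\B^q)\subset \B^q$, in which case an iteration argument produces an interior fixed point for $f$ (contradicting the hypothesis), or $g(\B^q)\subset \partial \B^q$, in which case $g$ is constant with value in $\partial \B^q$. Next, for each $r\in(0,1)$, the map $f_r:=rf$ sends $\B^q$ into its relatively compact subset $r\overline{\B^q}\Subset \B^q$, and hence has a unique fixed point $z_r\in \B^q$ (by Brouwer, or by Banach contraction in the Kobayashi metric). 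Since $f$ has no interior fixed point, these $z_r$ must escape every compact subset of $\B^q$ as $r\to 1$; extracting a subsequence $r_n\uparrow 1$ yields a limit point $\xi\in \partial \B^q$, which will be the Denjoy--Wolff point.

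The heart of the proof is Wolff's lemma: the horospheres $E(\xi,R):=\{z\in \B^q:|1-\langle z,\xi\rangle|^2<R(1-\|z\|^2)\}$ are $f$-invariant. To obtain this, apply the Schwarz--Pick inequality to $f_{r_n}$ with basepoint $z_{r_n}$, expand the Kobayashi distance on $\B^q$ explicitly in horospheric coordinates, and pass to the limit $n\to \infty$ using the fixed-point relation $f_{r_n}(z_{r_n})=z_{r_n}$. The resulting estimate
$$\frac{|1-\langle f(z),\xi\rangle|^2}{1-\|f(z)\|^2}\leq \frac{|1-\langle z,\xi\rangle|^2}{1-\|z\|^2},\qquad z\in \B^q,$$
is precisely horosphere invariance at $\xi$.

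Horosphere invariance then forces Denjoy--Wolff convergence: any subsequential limit $g$ of $(f^n)$ is constant with value on $\partial \B^q$ by the first paragraph, and must lie in the closure of every horosphere $E(\xi,R)$ by invariance. Since those closures intersect $\partial \B^q$ only at $\xi$, we conclude $g\equiv \xi$, and hence $f^n\to \xi$ uniformly on compact subsets. Finally, the Wolff inequality evaluated along radial sequences $z=t\xi$, $t\to 1^-$, produces the Julia inequality $\lambda_\xi\leq 1$, so the dilation \eqref{dilationball} is finite, and the Julia--Wolff--Carath\'eodory theorem upgrades $\xi$ to a boundary regular fixed point with $K$-limit convergence and strictly positive dilation. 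The main technical obstacle lies in the passage to the limit in the Schwarz--Pick inequality, which requires a careful expansion of the Kobayashi distance near $\partial \B^q$ in horospheric coordinates.
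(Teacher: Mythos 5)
The paper does not actually prove this statement: it is quoted as the classical Denjoy--Wolff theorem in the ball (due to Herv\'e, MacCluer, Abate), and the paper only uses it as background. So there is no ``paper's proof'' to compare against; the relevant question is whether your sketch is sound.

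Your outline follows the standard route (normality, approximate fixed points $z_r$ of $rf$, Wolff's lemma via a limit of Schwarz--Pick inequalities, horosphere invariance, and the Julia--Wolff--Carath\'eodory theorem to get the dilation bound). Most of the steps are correct and classical. However, the step you describe as ``an iteration argument produces an interior fixed point for $f$'' conceals the most delicate part of the proof and is a genuine gap as written. What is actually needed is the compact-divergence dichotomy for taut manifolds (cf.\ \cite[Corollary 2.1.32]{Ab}): if some subsequential limit $g$ of $(f^n)$ maps $\B^q$ into $\B^q$, then one must produce, via a diagonal argument on the index gaps $n_{k+1}-n_k$, a holomorphic retraction $\rho$ of $\B^q$ onto the \emph{limit manifold} $M=\rho(\B^q)$, show that $f|_M$ is an automorphism of the retract $M\cong\B^m$, and only then conclude (using the known dynamics of $\mathrm{Aut}(\B^m)$, or the elliptic case of the statement) that $f|_M$ has an interior fixed point. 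This is not a simple Banach/Brouwer iteration; it relies on the structure theory of holomorphic retracts and on the classification of ball automorphisms, and without it the argument ``$g(\B^q)\subset\B^q\Rightarrow$ interior fixed point'' is unjustified. Note also that you cannot sidestep this step by horosphere invariance alone: when $\lambda_\xi=1$ (the parabolic case) the inequality $h_{\xi,0}\circ f\le h_{\xi,0}$ only traps orbits inside a fixed horosphere without forcing them to $\xi$, so compact divergence must be established independently.

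The remaining steps, once compact divergence is available, are fine. The Schwarz--Pick passage to the limit is the Herv\'e/MacCluer computation (applying $k_{\B^q}(f_r(z),z_r)\le k_{\B^q}(z,z_r)$, rewriting the Kobayashi balls around $z_r$ as ellipsoids, and letting $r\to1$ to obtain horospheres). Horosphere invariance together with ``every subsequential limit is a boundary constant'' yields $f^n\to\xi$, and the JWC theorem gives $0<\lambda_\xi\le1$.
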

As a consequence, the family of holomorphic self-maps of $\B^q$ is partitioned in three classes:  $f$ is {\sl elliptic} if it admits a fixed point $z\in \B^q$, and if $f$ is not elliptic, it is {\sl parabolic} if the dilation $\lambda_\xi$ at its Denjoy--Wolff point  is 1 and it is {\sl hyperbolic} if $\lambda_\xi<1$.

The automorphisms of $\B^q$  have explicit normal forms, which show that they have a simple dynamical behaviour. For example, a hyperbolic automorphism has only two boundary regular fixed points, one is the Denjoy--Wolff $\xi$, and the other is a boundary repelling fixed point $\zeta$ with dilation $\lambda_\zeta=1/\lambda_\xi.$ 
The normal form of hyperbolic automorphisms is easily described. Recall that the {\sl Siegel half-space} $\HH^q:=\{(z_1,z')\in \C\times{\C^{q-1}}\colon{\rm Im}\,z_1>\|z'\|^2\}$ is biholomorphic to the ball $\B^q$.
Given any hyperbolic automorphism $\tau$ of the ball there exists a biholomorphism $\Psi\colon \B^q\to \HH^q$ sending the Denjoy--Wolff point $\xi$ to $\infty$, such that
$$\Psi\circ \tau\circ \Psi^{-1}(z_1,z')=\left(\frac{1}{\lambda_\xi} z_1,\frac{e^{it_1}}{\sqrt \lambda_\xi}z'_1,\dots, \frac{e^{it_{q-1}}}{\sqrt \lambda_\xi}z'_{q-1} \right),$$
with $t_j\in \R$ (a similar normal form can be obtained sending the repelling point to $\infty$). 

In order to understand the forward or backward dynamics of a holomorphic self-map $f$ it is natural to search for a semi-conjugacy between $f$ and an automorphism of the ball. In this direction, the following results were recently proved in \cite{ Ar, Ar2, ArBr, ArGu} using the theory of canonical models (the cases $q=1$  are the classical  results of Valiron \cite{Valiron} and Poggi-Corradini \cite{PC1}).
\begin{teo}[Forward iteration]\label{teopallaavanti}
Let $f\colon \B^q\to \B^q$ be a hyperbolic holomorphic self-map with  Denjoy--Wolff point $\xi$. Then there exists an integer $1\leq k\leq q$, a holomorphic map $h\colon \B^q\to \HH^k$ and a hyperbolic automorphism $\tau$ of $\HH^k$ of the form
$$\tau(z_1,z')=\left(\frac{1}{\lambda_\xi} z_1,\frac{e^{it_1}}{\sqrt \lambda_\xi}z'_1,\dots, \frac{e^{it_{k-1}}}{\sqrt \lambda_\xi}z'_{k-1} \right)$$
such that $$h\circ f= \tau\circ h.$$  
\end{teo}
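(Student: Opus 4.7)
The plan is to realize the semi-conjugacy $h$ as a limit of suitably rescaled iterates of $f$. I would work in the Siegel half-space model, passing through a Cayley transform so that the Denjoy--Wolff point $\xi$ corresponds to $\infty\in\partial\HH^q$. The candidate model automorphism is the diagonal dilation $\tau(z_1,z')=(\lambda_\xi^{-1}z_1,\lambda_\xi^{-1/2}z')$ on $\HH^q$; the extra phases $e^{it_j}$ and the dimension drop from $q$ to $k$ should come out of the limiting procedure, not be put in by hand.

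Fix a base point $w_0\in\HH^q$ and consider the forward orbit $w_n:=f^n(w_0)$. Hyperbolicity of $f$, combined with the Julia--Wolff--Carath\'eodory inequalities, forces $w_n\to\infty$ non-tangentially, with $\Im(w_n)_1$ growing like $\lambda_\xi^{-n}$ and transverse coordinates controlled accordingly. For each $n$ I would choose an automorphism $\sigma_n$ of $\HH^q$ --- a dilation composed with a Heisenberg translation --- that normalizes the orbit, say $\sigma_n(w_n)=\tilde w_0$ for a fixed reference point $\tilde w_0$. Define then
\[
h_n:=\sigma_n\circ f^n\colon\B^q\longrightarrow\HH^q.
\]

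The family $(h_n)$ is normal because every $h_n$ maps $w_0$ to the fixed point $\tilde w_0$ and takes values in $\HH^q$, so Montel delivers a subsequential limit $h\colon\B^q\to\HH^q$. Writing $h_{n+1}\circ f=(\sigma_{n+1}\circ\sigma_n^{-1})\circ h_n$ and passing to the limit, the semi-conjugacy $h\circ f=\tau\circ h$ follows provided $\sigma_{n+1}\circ\sigma_n^{-1}$ converges to an automorphism we can identify with the model $\tau$. The normalization $\sigma_n(w_n)=\tilde w_0$ together with the sharp asymptotic geometry of the orbit (essentially the boundary dilation at $\xi$ and a version of the boundary Schwarz lemma) forces this limit to be exactly the claimed diagonal dilation, up to the phases $e^{it_j}$ which encode the asymptotic rotation of transverse directions. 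Setting $k:=\mathrm{rk}(dh_{w_0})$, the image $h(\B^q)$ is a $\tau$-invariant complex submanifold through $\tilde w_0$ which, after linearization of $\tau$, is identified with a linearly embedded $\HH^k\hookrightarrow\HH^q$ on which $\tau$ restricts to the displayed normal form.

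The main obstacle is to show that $(h_n)$ converges as a full sequence (not merely along subsequences) and that the limit is non-degenerate. Non-degeneracy comes essentially for free from hyperbolicity: since $h_n(w_0)=\tilde w_0$ for every $n$, no limit can escape to $\partial\HH^q$. Full convergence, by contrast, requires that the normalizing automorphisms $\sigma_n$ be \emph{canonically} determined by the orbit, up to asymptotically negligible error; this canonicity is the technical heart of the argument, and is precisely what the canonical models machinery of \cite{Ar,Ar2,ArBr,ArGu} is designed to supply. In the sequel, I expect the squeezing function to take over the role of the automorphisms $\sigma_n$ when $\B^q$ is replaced by a general bounded strongly convex domain, exploiting that the squeezing function tends to $1$ near the boundary so that local ball-models can be patched into a global rescaling.
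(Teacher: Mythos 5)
Your rescaling strategy---normalize the forward orbit by automorphisms $\sigma_n$ of the model and take Montel limits of $\sigma_n\circ f^n$---is indeed the mechanism the paper uses (for $\Omega=\B^q$ the maps $\psi_m$ realizing the squeezing function are automorphisms of the ball), so the overall plan is the right one. The first genuine gap is the dimension drop: you set $k:=\mathrm{rk}(dh_{w_0})$ and assert that $h(\B^q)$ is a $\tau$-invariant complex submanifold biholomorphic to $\HH^k$, but neither constancy of the rank nor the submanifold property follows from holomorphy alone. The paper performs a \emph{second} rescaling, producing $\alpha:=\lim_h\alpha_{\nu(h)}\circ\psi_{\nu(h)}^{-1}$ and showing it is a holomorphic retraction of $\B^q$; the base space is $Z:=\alpha(\B^q)$, a retract and hence automatically biholomorphic to some $\B^k$, and the semi-model condition $\bigcup_n\alpha_n(\Omega)=Z$ is a separate nontrivial argument (Lemma~\ref{grosso}). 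Without an analogue of the retraction $\alpha$ your submanifold claim is unsupported.

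The second gap is the one you flag as ``the technical heart'': the $\sigma_n$ are determined only up to the isotropy at $\tilde w_0$, so $\sigma_{n+1}\circ\sigma_n^{-1}$ depends on choices and full-sequence convergence is not available directly. The paper does \emph{not} remove this ambiguity; it works throughout with a diagonal subsequence and instead establishes the universal property of a canonical Kobayashi hyperbolic semi-model (Proposition~\ref{core}(d), Remark~\ref{tango}), which characterizes $(\B^k,\ell,\tau)$ up to isomorphism of semi-models independently of the subsequence and of the normalizations chosen. Finally, $k\geq 1$ and the hyperbolic normal form of $\tau$ with dilation $\lambda_\xi$ are not obtained ``for free'': the paper derives them from the divergence-rate identity $c(\tau)=c(f)$ of Theorem~\ref{astratto}(2) combined with Proposition~\ref{ugualedil}, which gives $c(f)=-\log\lambda_\xi>0$. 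Your non-degeneracy remark only prevents the limit from escaping to $\partial\HH^q$; it does not rule out a constant limit, which is exactly what the divergence-rate computation excludes.
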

\begin{teo}[Backward iteration]\label{teopallaindietro}
Let $f\colon \B^q\to \B^q$ be a  holomorphic self-map and let $\zeta$ be a boundary repelling fixed point.
Then there exists an integer $1\leq k\leq q$, a holomorphic map $h\colon\HH^k\to \B^q$ and a hyperbolic automorphism $\tau$ of $\HH^k$ of the form
$$\tau(z_1,z')=\left(\frac{1}{\lambda_\zeta} z_1,\frac{e^{it_1}}{\sqrt \lambda_\zeta}z'_1,\dots, \frac{e^{it_{k-1}}}{\sqrt \lambda_\zeta}z'_{k-1} \right)$$
such that 
$$f\circ h= h\circ \tau.$$
\end{teo}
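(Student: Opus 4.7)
The plan is to construct the model automorphism $\tau$ and the intertwining map $h$ as limits of appropriately rescaled dynamics along a backward orbit of $f$ converging to $\zeta$. First, using the Julia--Wolff--Carath\'eodory estimates at the boundary repelling point, I would produce a sequence $(z_n)_{n\in\N}\subset \B^q$ with $f(z_n)=z_{n-1}$, $z_n\to\zeta$ non-tangentially, and consecutive Kobayashi steps $k_{\B^q}(z_{n-1},z_n)\to \tfrac{1}{2}\log\lambda_\zeta$. The existence of such a backward orbit is a standard consequence of the dilation condition $\lambda_\zeta>1$, which guarantees that iterating a suitable inverse branch of $f$ on points approaching $\zeta$ non-tangentially preserves both the non-tangential approach and the hyperbolic step size.

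Second, I would rescale the dynamics at each step. Fix a base point $p\in \HH^q$ and pick biholomorphisms $\gamma_n\colon \HH^q\to \B^q$ with $\gamma_n(p)=z_n$, normalized so that the frames at $p$ align with a chosen non-tangential approach frame at $\zeta$. The renormalized maps
$$f_n:=\gamma_{n-1}^{-1}\circ f\circ \gamma_n\colon \HH^q\to\HH^q$$
are self-maps of $\HH^q$ fixing $p$, hence form a normal family. A convergent subsequence yields a holomorphic $\tau\colon\HH^q\to\overline{\HH^q}$ with $\tau(p)=p$. The dilation condition at $\zeta$, propagated through the rescaling, constrains the spectrum of $d\tau(p)$: one eigenvalue has modulus $1/\lambda_\zeta$ and the remaining ones have modulus $1/\sqrt{\lambda_\zeta}$. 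A Schwarz lemma argument then forces $\tau$ to restrict to a hyperbolic automorphism of a $\tau$-invariant slice $\HH^k\subset\HH^q$ (for some $1\leq k\leq q$) of the normal form claimed in the statement.

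Finally, $h$ is obtained by a diagonal Montel extraction applied to suitably normalized compositions of the $\gamma_n$'s with iterates of $\tau$ restricted to the slice. The approximate intertwining $f\circ \gamma_n\approx \gamma_{n-1}\circ \tau$ yields, after the correct time-shift, an approximate relation $f\circ h_n\approx h_{n-1}\circ \tau$ between the candidate maps $h_n\colon \HH^k\to \B^q$; a common limit $h$ of $(h_n)$ and $(h_{n-1})$ along a subsequence then satisfies $f\circ h=h\circ \tau$. The main obstacle is the second step: pinning down the correct integer $k$ and certifying that $\tau$ truly restricts to an automorphism of $\HH^k$, rather than degenerating to a holomorphic retraction onto a smaller subdomain, requires a sharp boundary Schwarz--Pick rigidity analysis at $\zeta$ to precisely identify the spectrum of $d\tau(p)$ and to prevent collapse of the non-tangential directions in the rescaling limit.
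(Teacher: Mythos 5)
Your renormalization $f_n := \gamma_{n-1}^{-1}\circ f\circ\gamma_n$ with $\gamma_n(p)=z_n$ satisfies $f_n(p)=p$ by construction, so any locally uniform limit $\tau$ also fixes $p\in\HH^q$. This is fatal: a hyperbolic automorphism of $\HH^k$ has \emph{no} fixed points in $\HH^k$ (in the stated normal form the fixed points are $0$ and $\infty$, both on $\partial\HH^k$), so $\tau$ cannot restrict to one on any slice containing $p$. Your own spectral claim confirms the collapse: if $d\tau(p)$ has eigenvalues $1/\lambda_\zeta$ and $e^{it_j}/\sqrt{\lambda_\zeta}$, all of modulus $<1$, then by Schwarz--Pick $\tau$ is strictly contractive at $p$, is not an automorphism in any direction, and its iterates converge to $p$. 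You are conflating the constant derivative of the Siegel normal form (a linear map on $\HH^k$, evaluated at generic points) with the spectrum of $d\tau$ at an interior fixed point; the latter, for an automorphism, would have to be unitary.

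The correct route (Proposition~\ref{backcore} and Theorem~\ref{backastratto}) does not anchor the rescaling so that the limit fixes an interior point. One picks automorphisms $\psi_m\in{\rm Aut}(\B^q)$ with $\psi_m(z_m)=0$ and forms $\alpha_n:=\lim_h f^{m_n(h)-n}\circ\psi_{m_n(h)}^{-1}$, which land in a holomorphic retract $Z\cong\B^k$ and satisfy $\alpha_n(0)=z_n$; the automorphism $\tau$ is then defined \emph{abstractly} as the unique map with $\alpha_n\circ\tau=f\circ\alpha_n=\alpha_{n-1}$, i.e.\ it is the shift, not a limit of conjugates of $f$. This $\tau$ genuinely moves the base point: $\tau(0)$ corresponds, via the injective map $\Psi$ of Lemma~\ref{backgrosso}, to the translated backward orbit $(z_{m-1})\neq(z_m)$, so $\tau(0)\neq 0$. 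Hyperbolicity is then extracted from the divergence rate $c(\tau)=\lim_m s_m(z_\cdot)/m=\log\lambda_\zeta>0$, not from a fixed-point spectral analysis. Finally, your existence step is too casual: $f$ need not be invertible near $\zeta$, so one cannot simply ``iterate a suitable inverse branch''; the construction of a backward orbit converging to $\zeta$ with step $\log\lambda_\zeta$ is itself a nontrivial theorem (Theorem~\ref{backward}; see \cite{O,ArGu} for the ball).
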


In both theorems the function $h$ intertwines the map $f$ with a hyperbolic holomorphic automorphism of $\HH^k$. In 
Theorem \ref{teopallaavanti} one obtains an automorphism with Denjoy--Wolff point at $\infty$ and dilation $\lambda_\infty=\lambda_\xi$. In Theorem \ref{teopallaindietro} one obtains an automorphism with a repelling boundary fixed point at $\infty$ with dilation $\lambda_\infty=\lambda_\zeta$. The semi-conjugacy provided by $h$ satisfies a universal property and thus is  unique up to biholomorphisms. Such a semi-conjugacy is  called a {\sl canonical model} for $f$, see Sections \ref{canonicalforward}, \ref{canonicalbackward} for  definitions.

In this paper we are interested in extending these results to the case where $f\colon \Omega\to \Omega$ is a holomorphic self-map of a strongly convex domain  $\Omega\subset \subset\C^q$ whose boundary is  $C^3$. 
For such map $f$, the concepts of  boundary regular fixed point and dilation can be defined intrinsically in terms of the Kobayashi distance $k_\Omega$, see Section \ref{background} for definitions. 
In \cite{Ab2} Abate proved that  the Denjoy--Wolff theorem still holds in this setting. Hence we can partition the family of holomorphic self-maps of $\Omega$ in elliptic, parabolic and hyperbolic maps  exactly as in the case of the ball.

Let thus $f\colon \Omega\to \Omega$ be a holomorphic self-map of a strongly convex domain, which is either hyperbolic or which admits a boundary repelling fixed point $\zeta$. When trying to generalize Theorems  \ref{teopallaavanti} and \ref{teopallaindietro}, the first  obstacle that one encounters is that the proofs of these theorems rely heavily on the fact that the automorphism group of the ball is transitive, whereas  by Wong--Rosay's theorem any strongly convex domain which is not biholomorphic to the ball cannot have a transitive group of automorphisms. Moreover, it is  natural to search  for a semi-conjugacy of $f$ with a  hyperbolic automorphism of a (possibly lower-dimensional) strongly convex domain   $\Lambda\subset \subset \C^k$, but it follows again by   
Wong--Rosay's theorem that if a  strongly convex domain   $\Lambda$ admits a non-elliptic automorphism, then $\Lambda$ is biholomorphic to the ball $\B^k$.

Indeed, we prove that $f$  admits a natural semi-conjugacy with a hyperbolic automorphism of a ball $\B^k$, where $1\leq k\leq q$.
To cope with the lack of transitivity, we use the fact that the squeezing function $S_\Omega$ of $\Omega$ converges to $1$ at the boundary $\partial \Omega$, which roughly speaking means that the geometry of $\Omega$ resembles more and more the geometry of the ball as we approach the boundary. 
Hence we can rescale the dynamics of $f$  obtaining in the limit the desired intertwining mappings with automorphisms of a (possibly lower-dimensional) ball. 

Our main results are the following.
\begin{teo}[Forward iteration]\label{teoconvessoavanti}
Let $\Omega\subset\subset \C^q$ be a strongly convex domain with $C^3$ boundary.
Let $f\colon \Omega\to \Omega$ be a hyperbolic holomorphic self-map with  Denjoy--Wolff point $\xi$. Then there exists an integer $1\leq k\leq q$, a holomorphic map $h\colon \Omega\to \HH^k$ and a hyperbolic automorphism $\tau$ of $\HH^k$ of the form
$$\tau(z_1,z')=\left(\frac{1}{\lambda_\xi} z_1,\frac{e^{it_1}}{\sqrt \lambda_\xi}z'_1,\dots, \frac{e^{it_{k-1}}}{\sqrt \lambda_\xi}z'_{k-1} \right)$$
such that $$h\circ f= \tau\circ h.$$  
\end{teo}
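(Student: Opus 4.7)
The plan is to construct the semi-conjugacy $h$ as a subsequential scaling limit of maps built from the squeezing function along the orbit of $f$, with the key input being that the squeezing function of $\Omega$ tends to $1$ at the boundary so that the conjugated dynamics converges to a hyperbolic automorphism of a ball.

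First I would fix a base point $z_0\in \Omega$ and set $z_n:=f^n(z_0)$. By Abate's generalization of the Denjoy--Wolff theorem to strongly convex domains $z_n\to \xi$, and the successive Kobayashi step $k_\Omega(z_n,z_{n+1})$ stabilizes at $\tfrac{1}{2}\log(1/\lambda_\xi)$. Since $\Omega$ is strongly convex with $C^3$ boundary, the squeezing estimates (Deng--Guan--Zhang, Zimmer) produce holomorphic embeddings $\varphi_n\colon \Omega\hookrightarrow \B^q$ with $\varphi_n(z_n)=0$ and $r_n\B^q\subseteq \varphi_n(\Omega)\subseteq \B^q$, $r_n\to 1$. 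Pick once and for all a reference hyperbolic automorphism $\tau_*$ of $\HH^q$ of the diagonal form in the statement with dilation $\lambda_\xi$ at $\infty$, and a reference orbit $w_n:=\tau_*^n(w_0)$ with $w_0\in\HH^q$ fixed.

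Next I would compose $\Psi\circ \varphi_n\colon \Omega\to \HH^q$ (with $\Psi$ the Cayley transform) with an automorphism $A_n\in {\sf Aut}(\HH^q)$ chosen so that the map $h_n:=A_n\circ\Psi\circ\varphi_n$ satisfies $h_n(z_n)=w_n$ and so that the component of $dh_n(z_n)$ along the $\tau_*$--contracting direction is normalized in a fixed way. Since $r_n\to 1$, the conjugated increments $\varphi_{n+1}\circ f\circ \varphi_n^{-1}$ are, up to unitary corrections, $C^1$--close to a hyperbolic automorphism of $\B^q$ with dilation $\lambda_\xi$ on Kobayashi balls of bounded radius around the origin. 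A Montel/normal family argument then produces a subsequential limit $h$ of $(h_n)$ taking values in $\HH^q$ and satisfying $h(z_n)=\tau_*^n(h(z_0))$ for every $n$; holomorphic continuation from the accumulation of the orbit at $\xi$ upgrades this to the exact intertwining $h\circ f=\tau_*\circ h$.

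Let $k$ be the rank of $dh$ at $z_0$. The $\tau_*$--invariance and connectedness of $h(\Omega)$, combined with an analysis of how $d\tau_*^n$ acts normally to $h(\Omega)$, forces $h(\Omega)$ to lie in a $\tau_*$--invariant affine slice biholomorphic to $\HH^k$ on which $\tau_*$ restricts to the required normal form (the unitary angles $t_j$ being inherited from the action on the slice). The universal property of canonical models, developed in Section~\ref{canonicalforward}, then identifies $(h,\HH^k,\tau)$ as \emph{the} canonical model of $f$. The main obstacle will be producing the rescaling automorphisms $A_n$ so that $(h_n)$ is a normal family with a non--degenerate limit: the normalizations needed to pin the orbit in standard position could in principle push $h_n$ off to the boundary, collapsing the limit. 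This is precisely where $S_\Omega(z_n)\to 1$ is crucial, as it forces the rescaled maps to be close to maps appearing in the analogous construction on $\B^q$ (Theorem~\ref{teopallaavanti}), where non--degeneracy is already established, and transferring this closeness into uniform control on $\Omega$ while keeping track of the possibly lower limit dimension $k$ is where the bulk of the technical work lies.
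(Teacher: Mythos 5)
The core idea---rescaling via the squeezing function along the orbit and extracting normal-family limits---is indeed the engine of the paper's proof (Proposition~\ref{core} and Theorem~\ref{astratto}). But your plan has several gaps that would not close as written.

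The most serious one: the angles $t_1,\dots,t_{k-1}$ cannot be prescribed in advance. You fix $\tau_*$ "once and for all" in diagonal form with dilation $\lambda_\xi$, but its unitary part is an invariant of $f$, not a free parameter. If $\tau_*$ is already diagonal, restricting to a $\tau_*$-invariant coordinate slice merely picks out a subset of the angles you chose, so the assertion that "the unitary angles $t_j$ are inherited from the action on the slice" is circular: they come from your initial choice, not from $f$. The paper avoids this by constructing $\tau$ abstractly via the universal property in Proposition~\ref{core}(d), proving that it is an automorphism, computing its dilation via Proposition~\ref{ugualedil} ($\log\lambda_\xi=-c(f)=-c(\tau)$), and only then reducing $\tau$ to the normal form~\eqref{lomo1}.

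Second, the claim that $\varphi_{n+1}\circ f\circ\varphi_n^{-1}$ is $C^1$-close to a hyperbolic automorphism cannot be right as stated: those maps fix the origin, since $\varphi_n(z_n)=0=\varphi_{n+1}(z_{n+1})$, while a hyperbolic automorphism of $\B^q$ has no interior fixed point. You presumably mean the $h$-increments $h_{n+1}\circ f\circ h_n^{-1}$, but matching at the single orbit point $w_n\mapsto w_{n+1}$ does not give closeness to $\tau_*$, and the paper neither proves nor needs any convergence of conjugated one-steps. Similarly, "holomorphic continuation from the accumulation of the orbit at $\xi$" is not available: $\xi\in\partial\Omega$, the orbit has no interior accumulation point, and the identity principle does not apply.

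Third, you drop exactly the devices that make the paper's argument close. Normalizing $\psi_m(z_m)=0$ and passing to limits of $\psi_m\circ f^{m-n}$ produces a compatible family $\alpha_n$ with $\alpha_n(z_n)=0$, so Montel applies immediately; your normalization $h_n(z_n)=w_n\to\infty$ gives no a priori reason why $(h_n)$ is not compactly divergent. The limit $\alpha=\lim_h\alpha_{\nu(h)}\circ\psi_{\nu(h)}^{-1}$ is a holomorphic retraction of $\B^q$, whose image $Z=\alpha(\B^q)$ is automatically biholomorphic to $\B^k$; this produces the possibly lower-dimensional base without any invariant-slice analysis of $d\tau_*^n$. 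And the universal property of Proposition~\ref{core}(d) is what supplies the inverse of $\tau$ and hence shows $\tau$ is an automorphism---you cannot simply invoke the universal property of canonical models after the fact, since establishing it for your construction is precisely the missing content. (Also a minor imprecision: in the paper's conventions $\lim_n k_\Omega(z_n,z_{n+1})=k_{\B^k}(\ell(z_0),\tau(\ell(z_0)))\geq -\log\lambda_\xi$; it depends on $z_0$ and the factor $\tfrac12$ should not appear.)
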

\begin{teo}[Backward iteration]\label{teoconvessoindietro}
Let $\Omega\subset\subset \C^q$ be a strongly convex domain with $C^4$ boundary.
Let $f\colon \Omega\to \Omega$ be a  holomorphic self-map and let $\zeta$ be a boundary repelling fixed point.
Then there exists an integer $1\leq k\leq q$, a holomorphic map $h\colon \HH^k\to \Omega$ and a hyperbolic automorphism $\tau$ of $\HH^k$  of the form
$$\tau(z_1,z')=\left(\frac{1}{\lambda_\zeta} z_1,\frac{e^{it_1}}{\sqrt \lambda_\zeta}z'_1,\dots, \frac{e^{it_{k-1}}}{\sqrt \lambda_\zeta}z'_{k-1} \right)$$
such that 
$$f\circ h= h\circ \tau.$$
\end{teo}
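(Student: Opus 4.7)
The strategy is to rescale the dynamics of $f$ near the repelling fixed point $\zeta$ by the squeezing function and to pass to a limit living on a ball, reducing in the limit to Theorem~\ref{teopallaindietro}.

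First I would produce a backward orbit $(z_n)_{n\ge 0}\subset\Omega$ with $f(z_{n+1})=z_n$ converging to $\zeta$ and with bounded hyperbolic step $k_\Omega(z_n,z_{n+1})\le \tfrac12\log\lambda_\zeta$; such an orbit should exist by adapting the constructions of Poggi-Corradini and Ostapyuk from the ball to the strongly convex setting, using Abate's boundary estimates for $k_\Omega$ (this is where the $C^4$ hypothesis is expected to enter, via sharp Kobayashi-geodesic estimates near the boundary). Then, for each $n$ I would pick a squeezing embedding $\varphi_n\colon\Omega\hookrightarrow\B^q$ with $\varphi_n(z_n)=0$ and $s_n\B^q\subset\varphi_n(\Omega)\subset\B^q$, where $s_n=S_\Omega(z_n)\to 1$. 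Composing with a unitary one may further assume that the outward Euclidean normals at the boundary projections of the $z_n$ are sent to a common real direction in $\B^q$; this is consistent because those normals converge to the outward normal at $\zeta$.

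Define $h_n\colon\varphi_n(\Omega)\to\Omega$ by $h_n:=f^n\circ\varphi_n^{-1}$, so $h_n(0)=z_0$ for every $n$, and $T_n:=\varphi_{n+1}\circ\varphi_n^{-1}$, so that $f\circ h_n=h_{n+1}\circ T_n$ holds tautologically. Since $s_n\to 1$, any compact $K\Subset\B^q$ is eventually contained in $\varphi_n(\Omega)$, and the $1$-Lipschitz inequality $k_\Omega(h_n(w),h_n(w'))\le k_{\varphi_n(\Omega)}(w,w')$ combined with the comparison of $k_{\varphi_n(\Omega)}$ with $k_{\B^q}$ and $k_{s_n\B^q}$ yields, by Montel, a subsequential locally uniform limit $h\colon\B^q\to\Omega$. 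The bounded hyperbolic step forces $T_n(0)=\varphi_{n+1}(z_n)$ to stay in a fixed compact subset of $\B^q$; refining the subsequence, applying the same compactness argument to $T_n^{-1}$, and using Hurwitz, one extracts a locally uniform limit $T_n\to\tau\in\mathrm{Aut}(\B^q)$. Passing to the limit, $f\circ h=h\circ\tau$.

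It remains to recognise $\tau$ as a \emph{hyperbolic} automorphism with the prescribed dilation and to pass to the canonical dimension $k$. The hyperbolicity of $\tau$ with dilation $\lambda_\zeta$ at its repelling fixed point should follow from the construction: $T_n$ encodes the shift along a backward orbit of exact hyperbolic step $\tfrac12\log\lambda_\zeta$, and the limit of such shifts in $\mathrm{Aut}(\B^q)$ is forced to be hyperbolic with the prescribed dilation. A Cayley transform then converts $\tau$ into the stated Siegel form, and the correct integer $k$ together with the factorisation through $\HH^k$ is obtained by invoking the universal property of canonical models established in \cite{Ar2, ArBr, ArGu}: one quotients $\B^q$ by the maximal holomorphic retraction through $h$ that is preserved by $\langle\tau\rangle$.

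The step I expect to be the main obstacle is the identification of $\tau$ as a genuine, hyperbolic automorphism of $\B^q$. In the ball case, the analogue of $T_n$ already belongs to $\mathrm{Aut}(\B^q)$ by construction; here each $T_n$ is only a biholomorphism between two perturbed near-balls $\varphi_n(\Omega)$ and $\varphi_{n+1}(\Omega)$, and proving locally uniform convergence to a global automorphism with the correct Siegel normal form requires combining the Hausdorff-type convergence $\varphi_n(\Omega)\to\B^q$ coming from $S_\Omega(z_n)\to 1$ with refined estimates on the behaviour of the squeezing maps $\varphi_n$ along the backward orbit; this is precisely where the rescaling method of the paper is expected to do the crucial work.
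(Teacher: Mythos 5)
Your high-level strategy — rescale the backward dynamics by squeezing embeddings, pass to a subsequential limit on a ball, and obtain the intertwining with a hyperbolic ball automorphism — matches the paper's. However, the proposal has several genuine gaps, one of which hides the hardest and most original part of the paper's argument.

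\textbf{The backward orbit construction is the crux, and it is not a routine adaptation.} You write that a backward orbit converging to $\zeta$ with bounded step ``should exist by adapting the constructions of Poggi-Corradini and Ostapyuk\dots using Abate's boundary estimates.'' This is precisely where the paper must introduce a new technique. In the ball one runs an iterative stopping-time process against a fixed horosphere and then \emph{rescales by automorphisms of the ball} to keep the process inside a compact set; the transitivity of ${\rm Aut}(\B^q)$ is used in an essential way. In a strongly convex $\Omega$ this is unavailable. The paper first constructs (Proposition \ref{changeofcoordinates}, Lemma \ref{coordinates}) a \emph{global} change of coordinates in ${\rm Aut}(\C^q)$ under which $\Omega$ contains a $\B^q$-horosphere $E_{\B^q}(0,e_1,R)$ and is locally contained in $\B^q$ near $e_1$ — combining Fefferman's normalization of the boundary (requiring $C^4$), a parabolic push $T$, and Anders\'en–Lempert jet interpolation to upgrade a local biholomorphism to an automorphism of $\C^q$. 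Only then can one compare $k_\Omega$ and $k_{\B^q}$ on small $\B^q$-horospheres (Lemma \ref{eqregion}, Proposition \ref{changeofcoordinates}) and run a shrinking-horosphere stopping-time argument (Lemma \ref{jojorabbit}, Proposition \ref{limits}). None of this is visible in your plan.

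\textbf{The claimed step bound is impossible.} You propose $k_\Omega(z_n,z_{n+1})\le \tfrac12\log\lambda_\zeta$. For any backward orbit converging to a boundary repelling point with dilation $\lambda_\zeta>1$, the definition of the dilation forces $\liminf_n\,[k_\Omega(p,z_{n+1})-k_\Omega(p,z_n)]\ge\log\lambda_\zeta$, so the asymptotic step is at least $\log\lambda_\zeta$, not at most half of it. The paper's Theorem \ref{backward} produces an orbit with step \emph{equal to} $\log\lambda_\zeta$.

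\textbf{The passage to the limit for $T_n$ is not justified.} You set $T_n:=\varphi_{n+1}\circ\varphi_n^{-1}$ and claim $T_{n_j}\to\tau\in{\rm Aut}(\B^q)$ along a subsequence, together with $h_{n_j}\to h$, and conclude $f\circ h=h\circ\tau$. But the identity $f\circ h_n=h_{n+1}\circ T_n$ involves $h_{n_j+1}$, not $h_{n_{j+1}}$; there is no reason the limit of $h_{n_j+1}$ along your subsequence equals $h$. This index mismatch is exactly what the paper avoids by building, via a diagonal extraction, a \emph{whole family} $(\alpha_m)_{m\ge0}$ with $f\circ\alpha_{m+1}=\alpha_m$, and then obtaining $\tau$ from the \emph{universal property} of a canonical inverse limit (Proposition \ref{backcore}): $\tau$ is defined as the unique self-map of the retract $Z=\alpha(\B^q)$ with $\alpha_n\circ\tau=f\circ\alpha_n$, and shown to be an automorphism by exhibiting an explicit inverse via the same universal property. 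Trying to get $\tau$ as a raw locally uniform limit of transition maps between two perturbed near-balls skips this step.

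\textbf{Hyperbolicity is not established.} You assert that ``the limit of such shifts in ${\rm Aut}(\B^q)$ is forced to be hyperbolic with the prescribed dilation,'' but a limit of translations by bounded hyperbolic distance can perfectly well be elliptic. The paper pins down hyperbolicity via the divergence rate: it shows $c(\tau)=\inf_n s_n((z_m))/n=\log\lambda_\zeta>0$ using both the Julia-type lower bound coming from the dilation and the exact step $\log\lambda_\zeta$ of the constructed orbit, and then concludes $\tau$ is hyperbolic with the right dilation. Your sketch contains none of this quantitative control; in particular, with the erroneous step bound $\tfrac12\log\lambda_\zeta$, the argument would break even if the other gaps were filled.

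\textbf{Minor points.} The unitary normalization of the squeezing maps (aligning normals to a common direction) is neither needed nor clearly meaningful for the construction, and the invocation of Hurwitz for the automorphism limit is out of place — what is actually needed is the $\sigma\circ\tau={\rm id}$, $\tau\circ\sigma={\rm id}$ argument on compacta together with the non-degeneracy coming from bounded step, which your text gestures at but does not carry out.
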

In both cases the semi-conjugacy provided by $h$ is again a canonical model for $f$ and as such it is  unique up to biholomorphisms. Notice also that in the forward iteration case we obtain a similar result also for parabolic nonzero-step maps, see Theorem \ref{parabolicresult}.

The strong convexity of the domain $\Omega$, which implies the fact that the squeezing function converges to 1 at the boundary $\partial \Omega$, is  essential  in our proofs.
Indeed, a key ingredient of the proof of Theorem \ref{teoconvessoavanti} is that every holomorphic self-map of $\Omega$ with an orbit converging to the boundary admits a canonical model biholomorphic to a (possibly lower-dimensional) ball. This is not true on a weakly convex domain.
 As an example,
consider  the {\rm egg domain} $$\Omega:=\{(z_1,z_2)\in \C^2\colon |z_1|^2+|z_2|^4<1\},$$ 
which is strongly convex at every point of $\partial \Omega$ except for those with $\{z_2=0\}$ (where the squeezing function $S_\Omega$ does not converge to $1$).
Consider the automorphism $f\colon \Omega\to \Omega$ defined changing holomorphic coordinates to the unbounded realization $\{(z_1,z_2)\colon {\sf Im}\,z_1 >|z_2|^4\}$ of $\Omega$ and considering  $(z_1,z_2)\mapsto (\frac{1}{\lambda} z_1,\frac{1}{\sqrt[4]{\lambda}}z_2)$, with $0<\lambda<1$.
Every forward orbit of the automorphim $f\colon \Omega\to \Omega$ converges to the point $(1,0)$.
 A canonical model for the automorphism $f$ is simply given by the identity map ${\sf id}\colon \Omega\to \Omega$ intertwining $f$ with itself. But the canonical model is unique up to biholomorphisms, and $\Omega$ is not biholomorphic to the ball.
Similar considerations hold in the backward iteration case.

When dealing with the backward iteration case, an additional difficulty arises in the proof of Theorem \ref{teoconvessoindietro}. Indeed, in order to apply Theorem \ref{backastratto} one needs  to  construct  a backward orbit $(z_n)$ converging to $\zeta$ and satisfying
$$\lim_{n\to\infty} k_{\Omega}(z_n,z_{n+1})=\log\lambda_\zeta.$$
In  the case of the ball $\B^q$, such orbit is obtained in \cite{ArGu} using a fixed horosphere centered in $\zeta$ to define the stopping time of an iterative process. Transitivity of the automorphism group of $\B^q$ guarantees the convergence of this process.  In the case of a strongly convex domain $\Omega$ we need first to find a change of coordinates  in ${\rm Aut}(\C^q)$ so that in the new coordinates the  domain $\Omega$ contains a $\B^q$-horosphere centered in $e_1$, and is locally contained in $\B^q $ near $e_1$.
This is done  assuming that  $\partial\Omega$ is $C^4$-smooth, composing Fefferman's change of coordinates with a parabolic ``push", and using Anders\'en-Lempert jet interpolation to obtain an automorphism of $\C^q$. As a consequence we obtain that $k_{\B^q}$ and $k_\Omega$ are very close on small $\B^q$-horospheres centered in $e_1$.
We then define an iterative process using smaller and smaller   $\B^q$-horospheres   as stopping times, and we prove the convergence of the process by rescaling it with automorphisms of the ball.

\medskip {\bf Acknowledgements.} We want to thank Luka Boc Thaler for useful discussions and Andrew Zimmer for suggesting the proof of Proposition \ref{filippo}.

\section{Background}\label{background}
\subsection{Real geodesics}
\begin{definition}
Let $(X,d)$ be a metric space.  A {\sl real geodesic} is a map $\gamma$ from an interval $I\subset \R$ to $X$ which is an isometry with respect to the euclidean distance on $I$ and the distance on $X$, that is for all $s,t\in I$, $$d(\gamma(s),\gamma(t))=|t-s|.$$
If the interval is closed and bounded (resp. $[0,+\infty)$,  $(-\infty,+\infty)$) we call $\gamma$ a {\sl geodesic segment}  (resp. {\sl geodesic ray}, {\sl geodesic line}).
\end{definition}
\subsection{The ball} 
Horospheres and Koranyi regions play a central role in the study of strongly convex domains. They are generalizations of corresponding notions appearing in the setting of the unit ball $\B^q$, where they are defined in euclidean terms. Here we recall their definitions.

The {\sl horosphere} of  {\sl center} $\zeta\in\partial \B^q$ and {\sl radius} $R>0$ is  defined as
$$E(\zeta,R):=\left\{z\in \B ^q\colon \frac{|1-\langle z,\zeta\rangle|^2}{1-\Vert z\Vert^2}<R\right\}.$$
The {\sl Koranyi region} of {\sl center} $\zeta\in\partial \B^q$ and {\sl amplitude} $M>1$ is defined as
$$K(\zeta,M):=\left\{z\in \B ^q\colon\frac{|1-\langle z,\zeta\rangle|}{1-\Vert z\Vert}<M\right\}.$$
When working with horospheres, it is sometime convenient to consider their expression in the {\sl Siegel half-space} $\HH^q:=\{(z_1,z')\in\mathbb C\times \C^{q-1}\colon \textrm{Im}\, z_1> \Vert z'\Vert^2\}$, which is biholomorphic to $\B^q$ under the {\sl Cayley transform} $\mathcal C\colon \B^q\to \HH^q$ 
\begin{equation}
\label{cayley}
\mathcal C(z_1,z'):=\left(i\frac{1+z_1}{1-z_1},\frac{z'}{1-z_1}\right).
\end{equation}
Notice that also the bihomolomorphism $(z_1,z')\mapsto \left(i\frac{1+z_1}{1-z_1},i\frac{z'}{1-z_1}\right)$ from $\B^q$ to $\HH^q$  is commonly referred to as the Cayley transform.

With this change of holomorphic coordinates, the point $e_1=(1,0,\dots,0)$  is sent to $\infty$ and the horosphere $E(e_1, R)$ becomes 
$$
E(\infty,R):=\left\{(z_1,z')\in \HH ^q\colon \textrm{Im}\, z_1>\Vert z'\Vert^2+\frac{1}{R}\right\}.
$$

The automorphism group $\rm{Aut}(\B^q)$ is transitive, and this property characterizes the ball among strongly pseudoconvex domains.
\begin{teo}[Wong--Rosay \cite{Wo}]
Let $\Omega\subset \C^q$ be a domain. Suppose that there exist $x_0\in\Omega$ and a sequence  $(\varphi_n)$ in $\rm{Aut}(\Omega)$ such that $\varphi_n(x_0)\to \zeta\in\partial \Omega$, and that $\partial \Omega$ is $C^2$ and strongly pseudoconvex near $\zeta$. Then $\Omega$ is biholomorphic to $\B^q$.
\end{teo}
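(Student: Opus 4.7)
The plan is to apply the Pinchuk-style scaling method at the boundary point $\zeta$. Since $\partial\Omega$ is $C^2$ and strongly pseudoconvex near $\zeta$, I would first choose a local holomorphic chart $\Phi$ on a neighbourhood $U$ of $\zeta$, sending $\zeta$ to $0$, so that the local defining function of $\Phi(\Omega\cap U)$ takes the form
\[
\rho(w)=-\Im w_1+\|w'\|^2+O(|w|^3),
\]
i.e.\ $\Phi(\Omega\cap U)$ is a third-order perturbation of the Siegel half-space $\HH^q$ at the origin.

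Next I would introduce the non-isotropic dilations $\Lambda_\varepsilon(w_1,w'):=(\varepsilon^{-2}w_1,\varepsilon^{-1}w')$, which are automorphisms of $\HH^q$ magnifying the local picture at $0$. Writing $\Phi(\varphi_n(x_0))=w^{(n)}$, I would pick $\varepsilon_n>0$ comparable to $\sqrt{\Im w^{(n)}_1}$ and automorphisms $T_n$ of $\HH^q$ sending $\Lambda_{\varepsilon_n}(w^{(n)})$ to a fixed base point $p\in\HH^q$, and then form
\[
F_n:=T_n\circ\Lambda_{\varepsilon_n}\circ\Phi\circ\varphi_n\colon \Omega\to\C^q.
\]
By the cubic bound on $\rho$, the rescaled local images $T_n\circ\Lambda_{\varepsilon_n}\circ\Phi(\Omega\cap U)$ converge on compacta (in the Hausdorff sense) to $\HH^q$, and since each $\varphi_n$ is an automorphism of $\Omega$, the same holds for $F_n(\Omega)$.

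The third step is to pass to a limit. By construction $F_n(x_0)=p$, and for every compact $K\subset\Omega$ the images $F_n(K)$ eventually lie in a fixed compact subset of $\HH^q$, so Kobayashi-hyperbolicity of $\HH^q$ plus Montel yield a subsequential limit $F\colon\Omega\to\overline{\HH^q}$; the maximum principle then forces $F(\Omega)\subset \HH^q$. Running the symmetric argument with the inverses, I would consider $G_n:=\varphi_n^{-1}\circ\Phi^{-1}\circ\Lambda_{\varepsilon_n}^{-1}\circ T_n^{-1}$, defined on larger and larger subsets of $\HH^q$, and extract a limit $G\colon\HH^q\to\Omega$. Passing to the limit in $\varphi_n^{-1}\circ\varphi_n=\mathrm{id}$ and $\varphi_n\circ\varphi_n^{-1}=\mathrm{id}$ gives $G\circ F=\mathrm{id}_\Omega$ and $F\circ G=\mathrm{id}_{\HH^q}$, and since $\HH^q\cong \B^q$ via the Cayley transform \eqref{cayley}, we conclude $\Omega\cong\B^q$.

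The main obstacle is controlling the rescaled inverses $G_n$: one must guarantee that for every compact $K\subset\HH^q$ the sets $\Phi^{-1}\circ\Lambda_{\varepsilon_n}^{-1}\circ T_n^{-1}(K)$ remain in the chart $U$ (and stay at definite distance from $\partial\Omega$ inside that chart) for $n$ large, and that the resulting maps $G_n$ form a normal family with a non-degenerate limit. Both issues reduce to quantitative estimates of the Kobayashi (or Carath\'eodory) pseudometric on the rescaled local models showing that they converge, uniformly on compacta of $\HH^q$, to that of $\HH^q$ itself. This is precisely where the hypothesis of $C^2$ strong pseudoconvexity near $\zeta$ is used in an essential way, through the comparison of $\Omega$ with its osculating ball at $\zeta$ and the standard localization principle for the Kobayashi metric near strongly pseudoconvex boundary points.
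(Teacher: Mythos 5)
The paper does not prove this statement: it is recalled as a classical theorem with the citation \cite{Wo}, and no argument is given. So there is nothing in the paper to compare your proof against. Your proposal is, in outline, the standard Pinchuk-type scaling proof of the Wong--Rosay theorem (Wong's original argument went through the Bergman metric and the quotient of Carath\'eodory and Kobayashi--Eisenman volume forms, and Rosay's is different again; the scaling route came later). It is a legitimate route, but as written there are two genuine gaps you should address.

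First, the scaling must be centered at the moving foot points, not at $\zeta$. You dilate around the fixed point $\zeta=0$ using $\Lambda_{\varepsilon_n}$ and then compensate with $T_n\in\mathrm{Aut}(\HH^q)$ to bring $\Lambda_{\varepsilon_n}(w^{(n)})$ back to a base point $p$. If the orbit $\varphi_n(x_0)$ approaches $\zeta$ tangentially, then $\|w'^{(n)}\|/\varepsilon_n\to\infty$, so $\Lambda_{\varepsilon_n}(w^{(n)})$ leaves every compact subset of $\HH^q$; the maps $T_n$ are then an \emph{unbounded} sequence in $\mathrm{Aut}(\HH^q)$, and the local convergence $\Lambda_{\varepsilon_n}\circ\Phi(\Omega\cap U)\to\HH^q$ no longer transfers to $T_n\circ\Lambda_{\varepsilon_n}\circ\Phi(\Omega\cap U)\to\HH^q$: an unbounded sequence of automorphisms can push the region where you have Hausdorff convergence out to infinity. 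The standard fix is to choose, for each $n$, the nearest boundary point $\zeta_n\in\partial\Omega$ to $\varphi_n(x_0)$, apply $n$-dependent affine normalizations $A_n$ (converging, by $C^2$ regularity and strong pseudoconvexity, to a limit) carrying $\zeta_n$ to $0$ and the Levi form at $\zeta_n$ to the identity, and then dilate by $\delta_n=\mathrm{dist}(\varphi_n(x_0),\partial\Omega)$. With that centering, the rescaled point sits at a fixed interior point of $\HH^q$ automatically, no $T_n$ is needed, and the convergence of the domains is clean regardless of tangential or non-tangential approach.

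Second, two smaller points. With only a $C^2$ boundary you cannot write the remainder as $O(|w|^3)$; you only have $-\Im w_1+\|w'\|^2+o(|w|^2)$. That is still enough, since under $\Lambda_\varepsilon$ the remainder becomes $o(1)$ uniformly on compacta, but you should not claim cubic decay. And the whole argument tacitly requires $\Omega$ to be Kobayashi hyperbolic (e.g.\ bounded), both to get the normal family statements and, crucially, to invoke the localization of the Kobayashi metric near strongly pseudoconvex boundary points so as to conclude that $\varphi_n(K)\to\zeta$ for every compact $K\subset\Omega$ (which you use implicitly when asserting that $F_n$ is eventually defined on $K$ and that $F_n(K)$ stays compact in $\HH^q$). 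The statement in the paper says only ``domain,'' but in the context in which it is used $\Omega$ is bounded and strongly convex, so this is harmless there; it should still be stated as a hypothesis in a self-contained proof.
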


\subsection{Strongly convex domains}

 We start by recalling the definition of \textit{strong convexity}.

\begin{definition}
A bounded convex domain $\Omega\subset\C^q$ with $C^2$ boundary  is {\sl strongly convex} at $\zeta\in \partial \Omega$ if 
for some (and hence for any) defining function $\rho$ for $\Omega$ at $\zeta$, the Hessian $H_\zeta\rho$ is positive definite on the tangent space $T_\zeta\partial\Omega$. The domain $\Omega$ is {\sl strongly convex} if it is strongly convex at every point $\zeta\in \partial\Omega$.
\end{definition}
\begin{remark}
It is well known that a strongly convex domain is also strongly pseudoconvex.
\end{remark}
The analysis of strongly convex domains relies extensively on Lempert's theory of complex geodesics \cite{L}.
\begin{definition} A {\sl complex geodesic} in a Kobayashi hyperbolic manifold $X$ is a holomorphic map $\varphi\colon \D\rightarrow X$ which is an isometry with respect to the Kobayashi distance of the disc $\mathbb{D}\subset \C$ and the Kobayashi distance of $X$.
\end{definition}

\begin{teo}[See e.g. \cite{Ab}]\label{complexgeo}
Let $\Omega\subset \C^q$ be a bounded strongly convex domain with $C^3$ boundary.
\begin{enumerate}
\item For every pair of distinct points $z,w\in \Omega$ , let $r:={\rm tanh}(\frac{1}{2}k_\Omega(z,w))$. Then there exists a unique complex geodesic $\varphi$ such that $\varphi(0)=z$ and $\varphi(r)=w$.
\item  Every complex geodesic $\varphi\colon\D\to \Omega$ extends to a $C^1$ map on $\overline \D$, and the extension is injective.
\item For every $z\in \Omega$ and $\zeta\in\partial \Omega$, there exists a unique complex geodesic with $\varphi(0)=z$ and $\varphi(1)=\zeta$.
\end{enumerate}
\end{teo}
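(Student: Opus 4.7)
I would follow Lempert's original program, splitting the task into (a) a compactness/normal-families argument for existence and (b) a Riemann--Hilbert analysis of \emph{stationary discs} for uniqueness and boundary regularity.

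For existence in (1), the plan is to choose an extremal sequence $\varphi_n\colon \D\to \Omega$ with $\varphi_n(0)=z$ and $\varphi_n(s_n)=w$ such that the Poincar\'e distances $\omega_{\D}(0,s_n)\searrow k_\Omega(z,w)$. Since $\Omega$ is bounded, Montel's theorem produces a subsequential limit $\varphi\colon \D\to\overline\Omega$, and strong pseudoconvexity supplies a holomorphic peak function at every boundary point, which forces $\varphi(\D)\subset\Omega$. The distance-decreasing property of $k_\Omega$ together with $s_n\to r:=\tanh(\tfrac12 k_\Omega(z,w))$ then gives $k_\Omega(\varphi(0),\varphi(r))=\omega_\D(0,r)$, and a triangle-inequality chase along $[0,r]$ promotes this equality to a global isometry on $\D$. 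The same compactness scheme yields (3): take complex geodesics $\varphi_n$ from $z$ to a sequence $w_n\to\zeta\in\partial\Omega$, extract a limit via Montel, and use a peak function at $\zeta$ to force $\varphi(1)=\zeta$ in the $C^0$ sense.

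For uniqueness in (1) and the $C^1$-extension plus injectivity in (2), the central notion is that every complex geodesic $\varphi$ in $\Omega$ is \emph{stationary}: there exists a holomorphic dual map $\widetilde\varphi\colon\D\to \C^q$, continuous up to $\oD$, such that for every $\zeta\in\partial\D$ the covector $\overline\zeta\,\widetilde\varphi(\zeta)$ is a real outward conormal to $\partial\Omega$ at $\varphi(\zeta)$. I would derive stationarity from the dual extremal problem for the Kobayashi metric via Hahn--Banach, using the equality of the Kobayashi and Carath\'eodory metrics on convex domains. The pair $(\varphi,\widetilde\varphi)$ then solves a nonlinear Riemann--Hilbert problem with target the outward conormal bundle of $\partial\Omega$, which under strong convexity is a totally real $C^2$ submanifold of $\C^{2q}$. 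The H\"older theory of Riemann--Hilbert problems (precisely where the hypothesis $\partial\Omega\in C^3$ enters) yields at once the $C^{1,\alpha}$ regularity of $\varphi$ up to $\oD$ and the local uniqueness of stationary discs with prescribed interior data; global uniqueness in (1) follows by a connectedness argument along the segment joining $z$ and $w$, and injectivity of the boundary extension in (2) is a consequence of strict convexity of $\partial\Omega$ (two distinct boundary points of $\D$ mapping to the same boundary point of $\Omega$ would force an interior intersection, contradicting local uniqueness).

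The main obstacle is the Riemann--Hilbert step itself: one must prove that the linearization of the stationary equation along a geodesic is Fredholm of index three, with cokernel spanned exactly by the M\"obius reparametrizations of $\D$, so that modulo these automorphisms the solution is unique and depends smoothly on its data. This is where both hypotheses — strong convexity and $C^3$ smoothness of $\partial\Omega$ — are essential and non-removable. Once this package is established, assertions (1), (2) and (3) all follow from the compactness reductions sketched above.
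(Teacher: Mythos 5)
This result is cited in the paper (``See e.g.\ \cite{Ab}'') rather than proved; it is classical Lempert theory, and the reference is to Abate's book, Chapter~2.6. So there is no ``paper's own proof'' to compare against; what follows is an assessment of your sketch against the standard argument.

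Your overall decomposition — normal families plus convexity for existence, the stationarity/Riemann--Hilbert formalism for uniqueness and boundary regularity — is indeed Lempert's program, and the key structural points (holomorphic peak functions from strong pseudoconvexity, stationarity via the dual extremal problem and Carath\'eodory--Kobayashi equality on convex domains, the $C^3$ hypothesis feeding into $C^{1,\alpha}$ regularity of solutions of the conormal-bundle Riemann--Hilbert problem) are correctly identified. There are, however, a few real gaps. First, your treatment of (3) only addresses existence: you produce a limiting disc, but you never address uniqueness of the geodesic through $z$ with $\varphi(1)=\zeta$, which is a separate statement and is genuinely harder than uniqueness in (1) because the interior parametrizations of two candidate discs need not match at any point other than $0$. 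Second, even for existence in (3), Montel's theorem alone does not give $\varphi(1)=\zeta$: one needs uniform $C^{0,\alpha}(\oD)$ (in fact $C^{1,\alpha}(\oD)$) bounds on the family of geodesics through $z$, which come from Lempert's a priori estimates (or the Huang estimate the paper later quotes), not from the peak-function argument — without them the approximating geodesics could slip past $\zeta$ on the boundary. Third, your Fredholm remark has the roles of kernel and cokernel reversed: the index-three kernel is what absorbs the M\"obius reparametrizations; the cokernel should vanish. Finally, the standard uniqueness argument for (1) in the strictly convex setting is not the connectedness argument you describe but a convexity-averaging trick: if $\varphi_1\neq\varphi_2$ are two geodesics with the same data, then $\tfrac12(\varphi_1+\varphi_2)$ is still a competitor and hence a geodesic, but its boundary values lie strictly inside $\Omega$ wherever $\varphi_1(\zeta)\neq\varphi_2(\zeta)$ on $\partial\D$, contradicting properness. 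This is worth getting right because the injectivity claim in (2) is proved by the same averaging mechanism, not by the ``interior intersection'' heuristic you propose.
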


Complex geodesics are isometries between $(\D,k_\D)$ and $(\Omega,k_\Omega)$, and therefore map real geodesic of $\D$ to real geodesic in $\Omega$. On the other hand every real geodesic of $\Omega$ is contained in some complex geodesic \cite[Lemma 3.3]{GS}, and its pullback is a real geodesic in $\D$.
 Hence every real geodesic in $\Omega$ is $C^\infty$, and for all $x\neq y\in \Omega$ the geodesic segment joining $x$ to $y$ is unique up to isometries of the interval $I$. 
Moreover for every $p\in\Omega$ and $\zeta\in\partial\Omega$ there exists a unique geodesic ray connecting the two points.

 Let $\zeta\in\partial \Omega$ and choose a {\sl pole} $p\in \Omega$. It is proved in \cite[Theorem 2.6.47]{Ab}  that the limit 
\begin{equation}\label{limiteesiste}
\lim_{w\to\zeta}[k_\Omega(z,w)-k_\Omega(p,w)]
\end{equation} exists. We denote by  
$h_{\zeta,p}\colon \Omega\rightarrow \mathbb R_{>0}$ (sometimes by $h^\Omega_{\zeta,p}$) the continuous function defined as
$$
 h_{\zeta,p}(z):=\textrm{exp}\left(\lim_{w\to\zeta}[k_\Omega(z,w)-k_\Omega(p,w)]\right).
$$
If instead of $p$ we choose a different pole $p'\in \Omega$ the function changes by a multiplicative constant:
\begin{equation}
\label{differentpole}
h_{\zeta,p'}(z)=h_{\zeta,p}(z)h_{\zeta,p'}(p).
\end{equation}
The concepts of horosphere, Koranyi region, boundary regular fixed points and dilation can be carried over to the case of strongly convex domains with $C^3$ boundary, giving intrinsic definitions in terms of the Kobayashi distance.

\begin{definition}
The {\sl horosphere} of {\sl center} $\zeta\in\partial \Omega$, {\sl pole} $p\in \Omega$ and {\sl radius} $R>0$ is the set
\[
E_\Omega(p,\zeta,R):=\left\{z\in \Omega\,|\,h_{\zeta,p}(z)<R\right\}.
\]
The {\sl Koranyi region}  of {\sl center} $\zeta\in\partial \Omega$, {\sl pole} $p\in \Omega$ and {\sl amplitude} $M>1$ is the set
\[
K_\Omega(p,\zeta,M):=\left\{z\in \Omega\,|\,\log h_{\zeta,p}(z)+k_\Omega(p,z)<2\log M\right\}.
\]
\end{definition}

\begin{remark}
When $\Omega=\B^q$, horospheres and Koranyi regions with pole $p=0$ and center $\zeta\in\partial \B^q$ coincide with the regions $E(\zeta,R)$ and $K(\zeta,M)$ defined previously (see \cite[Propositions 2.2.20 and 2.7.3]{Ab}).
\end{remark}

\begin{definition}\label{defdil}
Let $\Omega\subset \C^q $ be a strongly convex domain with $C^3$ boundary.
A holomorphic map $f\colon \Omega \to \C^m$ has $K$-limit $\sigma$ at $\zeta\in\partial \Omega$ if
for every sequence $(z_n)$ converging to $\zeta$ inside a Koranyi region we have that $f(z_n)$ converges to $\sigma$.
If $f\colon \Omega\to \Omega$ is a holomorphic self-map, a point $\zeta\in \partial\Omega $ is a {\sl boundary fixed point} if 
$$K\hbox{-}\lim_{z\to \zeta}f(z)=\zeta.$$
Given $\zeta\in \partial \Omega$, the  {\sl dilation} of $f$ at $\zeta$ with {\sl pole} $p\in \Omega$ is the number  $\lambda_{\zeta,p}\in\mathbb R_{>0} $ defined as
\[
\log \lambda_{\zeta,p}=\liminf_{z\to\zeta}[k_\Omega(p,z)-k_\Omega(p,f(z))].
\]
\end{definition}
\begin{remark}
By \cite[Lemma 1.3]{AbRa} the dilation coefficient $\lambda_{\zeta,p}$ at a boundary fixed point does not depend on $p\in \Omega$, thus we can write $\lambda_\zeta=\lambda_{\zeta,p}.$ The proof of this fact is  based on the existence of complex geodesics and of the limit (\ref{limiteesiste}).
\end{remark}
\begin{definition}
A boundary fixed point $\zeta\in \partial \Omega$ for $f\colon \Omega \to \Omega$ is {\sl regular} if its dilation $\lambda_\zeta$ is finite.
\end{definition}
\begin{remark}
If $\Omega=\B^q$, then a straightforward calculation shows that
$$\liminf_{z\to\zeta}[k_\Omega(0,z)-k_\Omega(0,f(z))]=\liminf_{z\to\zeta}\log\frac{1-\Vert f(z)\Vert}{1-\Vert z\Vert},$$
in agreement with \eqref{dilationball}.
\end{remark}

We will need the following version of Julia's Lemma (see e.g. \cite[Theorem 2.4.16, Proposition 2.7.15]{Ab})
\begin{prop}
\label{julialem}
Let $\Omega\subset \C^q$ be a bounded strongly convex domain with $C^3$ boundary, and let  $f\colon\Omega\to \Omega$ be a holomorphic self-map. Let $\zeta\in\partial \Omega$ be a boundary regular fixed point, and let $p\in \Omega$. Then
\begin{align*}
f(E_\Omega(p,\zeta,R))\subset E_\Omega(p,\zeta,\lambda_{\zeta}R),\quad \forall R>0.
\end{align*}
\end{prop}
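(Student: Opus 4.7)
The plan is to adapt the classical Julia's lemma argument to the strongly convex setting by combining the distance-decreasing property of $f$ with the characterization of horospheres as renormalized level sets of the Kobayashi distance. Fix $z \in E_\Omega(p,\zeta,R)$, so that $h_{\zeta,p}(z) < R$; I aim to show $h_{\zeta,p}(f(z)) \leq \lambda_\zeta h_{\zeta,p}(z)$, which immediately yields $h_{\zeta,p}(f(z)) < \lambda_\zeta R$ and hence $f(z) \in E_\Omega(p,\zeta,\lambda_\zeta R)$.

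The first step, which I expect to be the most delicate, is to produce a sequence $(w_n) \subset \Omega$ with $w_n \to \zeta$ and $f(w_n) \to \zeta$ along which the dilation $\liminf$ is realized:
$$\lim_{n\to\infty}\bigl[k_\Omega(p,w_n) - k_\Omega(p,f(w_n))\bigr] = \log \lambda_\zeta.$$
The natural strategy is to insist that $(w_n)$ approach $\zeta$ inside some Koranyi region $K_\Omega(p,\zeta,M)$, because then the $K$-limit hypothesis built into the definition of the boundary fixed point $\zeta$ forces $f(w_n)\to \zeta$ automatically. That the $\liminf$ defining $\lambda_\zeta$ is actually attained along a non-tangential sequence is the content of the strongly convex Julia--Wolff--Carath\'eodory machinery of Abate (see in particular \cite[Proposition 2.7.15]{Ab} and \cite[Lemma 1.3]{AbRa}), which in turn rests on the existence of complex geodesics furnished by Theorem \ref{complexgeo} and on the fact that the limit \eqref{limiteesiste} exists.

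Once such a sequence $(w_n)$ is in hand, the computation is routine. Since both $w_n \to \zeta$ and $f(w_n) \to \zeta$, the limit defining the horosphere function can be evaluated along each:
$$\log h_{\zeta,p}(z) = \lim_{n\to\infty}\bigl[k_\Omega(z,w_n) - k_\Omega(p,w_n)\bigr], \qquad \log h_{\zeta,p}(f(z)) = \lim_{n\to\infty}\bigl[k_\Omega(f(z),f(w_n)) - k_\Omega(p,f(w_n))\bigr].$$
Applying the Kobayashi contraction $k_\Omega(f(z),f(w_n)) \leq k_\Omega(z,w_n)$ and inserting $\pm k_\Omega(p,w_n)$ on the right, I obtain the pointwise estimate
$$k_\Omega(f(z),f(w_n)) - k_\Omega(p,f(w_n)) \leq \bigl[k_\Omega(z,w_n) - k_\Omega(p,w_n)\bigr] + \bigl[k_\Omega(p,w_n) - k_\Omega(p,f(w_n))\bigr].$$
Passing to the limit in $n$ and exponentiating yields $h_{\zeta,p}(f(z)) \leq \lambda_\zeta\, h_{\zeta,p}(z)$, as desired.

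The main obstacle, as indicated, is the preparatory existence of the sequence $(w_n)$: the $\liminf$ in Definition \ref{defdil} is taken over arbitrary sequences in $\Omega$, whereas the $K$-limit hypothesis only propagates convergence of $f(w_n)$ to $\zeta$ when $(w_n)$ approaches $\zeta$ non-tangentially. Bridging this gap requires the non-tangential refinement of the dilation (a genuine input from the strongly convex boundary regularity and from Lempert theory); the remainder of the argument is a direct transposition of the one-dimensional Julia lemma via the triangle inequality.
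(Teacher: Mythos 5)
Your triangle-inequality computation is correct and is indeed the engine of every proof of Julia's lemma: once one has a sequence $(w_n)\to\zeta$ with $f(w_n)\to\zeta$ and $k_\Omega(p,w_n)-k_\Omega(p,f(w_n))\to\log\lambda_\zeta$, the contraction of $k_\Omega$ under $f$ together with the insertion of $\pm k_\Omega(p,w_n)$ yields $h_{\zeta,p}(f(z))\le\lambda_\zeta\, h_{\zeta,p}(z)$. The weak spot is precisely where you flag it, but the way you propose to fill it is in danger of circularity: \cite[Proposition 2.7.15]{Ab} is essentially the statement being proved, and \cite[Lemma 1.3]{AbRa} gives pole-independence of $\lambda_\zeta$, not the non-tangential realization of the $\liminf$. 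That realization (the Julia--Wolff--Carath\'eodory refinement) is itself typically derived \emph{from} Julia's lemma, so invoking it here requires a careful check that one is not arguing in a circle.

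The route actually taken in the cited sources sidesteps this. Take \emph{any} sequence $(w_n)\to\zeta$ realizing the $\liminf$, and pass to a subsequence so that $f(w_n)\to\tau\in\overline\Omega$. Since $k_\Omega(p,f(w_n))\ge k_\Omega(p,w_n)-\log\lambda_\zeta-o(1)\to\infty$, one has $\tau\in\partial\Omega$. Your estimate then applies verbatim with $\tau$ in place of $\zeta$ on the left-hand side, giving $f(E_\Omega(p,\zeta,R))\subset E_\Omega(p,\tau,\lambda_\zeta R)$ for all $R>0$: the unconditional form of Julia's lemma, valid with no fixed-point hypothesis. Only now does the assumption that $\zeta$ is a boundary regular fixed point enter. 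If $(u_n)\to\zeta$ inside a Koranyi region, then $\log h_{\zeta,p}(u_n)<2\log M - k_\Omega(p,u_n)\to-\infty$, so $u_n$ eventually lies in $E_\Omega(p,\zeta,R)$ for every fixed $R>0$; hence $f(u_n)\in E_\Omega(p,\tau,\lambda_\zeta R)$, and since horospheres shrink to their center as $R\to 0$, this forces $f(u_n)\to\tau$. Thus the $K$-limit of $f$ at $\zeta$ is $\tau$, and the fixed-point hypothesis pins $\tau=\zeta$. This version never needs to know that the $\liminf$ is attained along a non-tangential sequence, and is therefore the cleaner and self-contained route.
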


Complex geodesics are also useful in order to compute dilation coefficients. Recall for example the following result \cite[Lemma 3.1]{AbRa}.
\begin{lemma}
\label{AbateRaissy}
Let $\Omega\subset \C^q$ be a bounded strongly convex domain with $C^3$ boundary, and let $f\colon \Omega\rightarrow \Omega$ be a holomorphic self-map. Let $\zeta\in\partial \Omega$ be a boundary regular fixed point of $f$, and let $\varphi\colon \D\rightarrow \Omega$ be  a complex geodesic with $\varphi(1)=\zeta$. Then
$$
\lim_{t\to 1, t\in \R\cap \D}k_\Omega(\varphi(t),f(\varphi(t)))=|\log \lambda_\zeta|.
$$

\end{lemma}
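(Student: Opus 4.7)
The plan is to reduce the statement to the classical one-variable Julia--Wolff--Carath\'eodory limit on the unit disc, via Lempert's holomorphic retraction onto the complex geodesic $\varphi(\D)$. Set $p:=\varphi(0)$. Lempert's theory provides a holomorphic retraction $\rho_\varphi\colon\Omega\to\varphi(\D)$ with $\rho_\varphi\circ\varphi=\varphi$; I define the induced holomorphic self-map of the disc
$$\tilde f:=\varphi^{-1}\circ\rho_\varphi\circ f\circ\varphi\colon\D\to\D.$$
Since $\varphi$ is an isometric embedding for the Kobayashi distances, taking the defining limit of the horocycle function along $w=\varphi(s)$ as $s\to 1$ gives $h^\Omega_{\zeta,p}(\varphi(\xi))=h^\D_{1,0}(\xi)$ for every $\xi\in\D$. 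Applying the $1$-Lipschitz property of $\rho_\varphi$ to the pair $(z,\varphi(s))$ and passing to the limit in $s$ yields the horocycle monotonicity
$$h^\Omega_{\zeta,p}(\rho_\varphi(z))\leq h^\Omega_{\zeta,p}(z),\qquad z\in\Omega.$$

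Next I verify that $\tilde f$ admits $1$ as a boundary regular fixed point with dilation equal to $\lambda_\zeta$. The relation $\tilde f(t)\to 1$ as $t\to 1^-$ follows from the $K$-limit of $f$ at $\zeta$ together with the continuous extension of $\rho_\varphi$ near $\zeta$. For the dilation, Julia's Lemma (Proposition \ref{julialem}) applied to $f$ combined with the horocycle monotonicity gives
$$h^\D_{1,0}(\tilde f(t))=h^\Omega_{\zeta,p}(\rho_\varphi(f(\varphi(t))))\leq h^\Omega_{\zeta,p}(f(\varphi(t)))\leq \lambda_\zeta\,h^\D_{1,0}(t),$$
so the disc Julia lemma forces $\lambda_{\tilde f,1}\leq \lambda_\zeta$. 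The opposite inequality $\lambda_{\tilde f,1}\geq \lambda_\zeta$ comes from the estimate $k_\D(0,\tilde f(t))=k_\Omega(p,\rho_\varphi(f(\varphi(t))))\leq k_\Omega(p,f(\varphi(t)))$ combined with the liminf definition of $\lambda_\zeta$. The one-variable Julia--Wolff--Carath\'eodory theorem then yields $\lim_{t\to 1^-}k_\D(t,\tilde f(t))=|\log\lambda_\zeta|$.

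Finally I compare the two distances. The lower bound
$$k_\Omega(\varphi(t),f(\varphi(t)))\geq k_\Omega(\varphi(t),\rho_\varphi(f(\varphi(t))))=k_\D(t,\tilde f(t))$$
is immediate from the $1$-Lipschitz property of $\rho_\varphi$ together with the fact that $\varphi$ is an isometry. The matching upper bound follows from the triangle inequality
$$k_\Omega(\varphi(t),f(\varphi(t)))\leq k_\D(t,\tilde f(t))+\varepsilon(t),\qquad \varepsilon(t):=k_\Omega(\rho_\varphi(f(\varphi(t))),f(\varphi(t))),$$
provided one shows $\varepsilon(t)\to 0$ as $t\to 1^-$. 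I expect this last point to be the main obstacle: it amounts to saying that the transverse displacement of $f(\varphi(t))$ from $\varphi(\D)$ is Kobayashi-negligible in the limit, a statement trivial in dimension one that in higher dimension relies on the Julia--Wolff--Carath\'eodory theorem for bounded strongly convex domains with $C^3$ boundary, which delivers sharp restricted $K$-limits of $f$ at $\zeta$ and sufficient control on the transverse derivatives of $f$ along $\varphi$. Once $\varepsilon(t)\to 0$ is established, combining all the bounds concludes the proof.
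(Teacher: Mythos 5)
This lemma is quoted in the paper from Abate--Raissy \cite[Lemma 3.1]{AbRa} without proof, so there is no in-text argument to compare against; I will assess your outline on its own terms. The reduction to the disc via Lempert's retraction $\rho_\varphi=\varphi\circ\tilde\varphi$ is a sensible route, and the preparatory steps are all correct: the restriction identity for the horocycle function along $\varphi$, the monotonicity $h^\Omega_{\zeta,p}\circ\rho_\varphi\le h^\Omega_{\zeta,p}$, the two-sided estimate pinning the dilation of $\tilde f=\tilde\varphi\circ f\circ\varphi$ at $1$ to exactly $\lambda_\zeta$, the one-variable limit $k_\D(t,\tilde f(t))\to|\log\lambda_\zeta|$, and the lower bound $k_\Omega(\varphi(t),f(\varphi(t)))\ge k_\D(t,\tilde f(t))$.

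The genuine gap is the one you flag yourself: the upper bound requires $\varepsilon(t):=k_\Omega\bigl(\rho_\varphi(f(\varphi(t))),f(\varphi(t))\bigr)\to 0$, and you assert rather than prove this, appealing vaguely to ``the Julia--Wolff--Carath\'eodory theorem for bounded strongly convex domains.'' This is not a routine verification. Already for $\Omega=\B^q$, $\zeta=e_1$, $\varphi(\xi)=\xi e_1$, $\rho_\varphi(z)=(z_1,0)$, one computes ${\rm tanh}\bigl(\varepsilon(t)/2\bigr)=\|(f_2,\dots,f_q)(te_1)\|/\sqrt{1-|f_1(te_1)|^2}$, and sending this to $0$ uses the full transverse part of Rudin's Julia--Wolff--Carath\'eodory theorem (the components of $f$ orthogonal to $e_1$ are $o\bigl((1-z_1)^{1/2}\bigr)$ in the restricted $K$-limit sense) together with the elementary lower bound $1-|f_1(te_1)|^2\ge c(1-t)$. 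In a strongly convex domain the analogous statement---that the displacement of $f(\varphi(t))$ transverse to the geodesic disc is Kobayashi-negligible---is a derivative-type estimate whose known formulations typically require more boundary regularity than $C^3$; you would need either a precise citation adapted to $C^3$ or a direct argument. Until $\varepsilon(t)\to 0$ is actually established, the proof is incomplete exactly where you predicted.
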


The Denjoy--Wolff theorem also carries over to this setting (see e.g. \cite[Theorem 0.6]{Ab2}).
\begin{teo} Let $\Omega\subset \C^q$ be a bounded strongly convex $C^3$ domain. Let $f\colon \Omega\to \Omega$ be a holomorphic self-map without interior fixed points.
Then 
there exists a boundary regular fixed point $\xi\in \partial \Omega$ with dilation  
$0<\lambda_\xi\leq 1$, called the {\sl Denjoy--Wolff point}, such that  the sequence of iterates $(f^n)$ converges to $\xi$.
\end{teo}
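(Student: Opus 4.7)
The plan is to follow the classical Denjoy--Wolff scheme adapted to strongly convex domains, using Julia's lemma (Proposition \ref{julialem}) as the main geometric tool. Fix $z_0\in\Omega$ and set $z_n:=f^n(z_0)$. The argument splits into three parts: first, show that every orbit is forced to the boundary; second, extract a boundary regular fixed point $\xi$ with dilation $\lambda_\xi\leq 1$ as a cluster point of $(z_n)$; third, upgrade cluster convergence to full convergence of $(f^n)$ on compact subsets to the constant map $\xi$.

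For the first part, since $\Omega$ is bounded and Kobayashi hyperbolic it is taut, so Montel's theorem applies to the family $(f^n)$. If $(z_n)$ had an interior cluster point $p\in\Omega$, I would extract a subsequence $f^{n_k}\to g$ for some holomorphic $g\colon\Omega\to\overline{\Omega}$ with $g(z_0)=p\in\Omega$, and then, by a diagonal extraction applied to the gap iterates $f^{n_{k+1}-n_k}$, produce a holomorphic self-map $h\colon\Omega\to\Omega$ satisfying $h\circ g=g$ on $\Omega$. A rigidity argument using tautness and the fact that $g$ is an open map (ruling out degeneration into $\partial\Omega$) then forces $h$ to fix an interior point, and backtracking this fixed point to $f$ via the commutation relation contradicts the hypothesis that $f$ has no interior fixed points.

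For the second part, pass to a subsequence with $z_{n_k}\to\xi\in\partial\Omega$, and for each $k$ let $\xi_k\in\partial\Omega$ be the boundary endpoint of the (unique) real geodesic ray from $z_0$ through $z_{n_k}$, whose existence is guaranteed by Theorem \ref{complexgeo}. Setting $R_k:=h_{\xi_k,z_0}(f(z_0))$ and applying a Julia-type contraction along the finite orbit $z_0,\ldots,z_{n_k}$ inside the horosphere $E_\Omega(z_0,\xi_k,R_k)$, combined with continuity of the horosphere function at boundary points, one obtains in the limit a horosphere $E_\Omega(z_0,\xi,R)$ containing $f(z_0)$ with $f(E_\Omega(z_0,\xi,R))\subset E_\Omega(z_0,\xi,R)$. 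This is exactly the statement that $\xi$ is a boundary regular fixed point with $\lambda_\xi\leq 1$. The third part then follows from Proposition \ref{julialem} applied iteratively: $f^n(z)\in E_\Omega(z_0,\xi,\lambda_\xi^n R(z))$ for every $z\in\Omega$, and on a strongly convex domain with $C^3$ boundary the nested family of horospheres centered at $\xi$ shrinks to $\{\xi\}$ inside $\overline{\Omega}$, which together with Koranyi non-tangential limit control forces $f^n\to\xi$ uniformly on compacta.

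The main obstacle is the orbit escape step: unlike in the disc, one cannot directly invoke a Schwarz--Pick contraction argument, so the contradiction has to be run carefully using tautness, a double extraction of subsequences, and the fact that the limit map $g$ is non-constant (otherwise the identity-principle step collapses). A secondary technical point in the second part is verifying that the limiting horosphere radius $R$ is strictly positive and finite; this is ensured by the uniform bound $k_\Omega(f(z_0),z_{n_k+1})\leq k_\Omega(z_0,z_{n_k})$ coming from the Kobayashi--Schwarz inequality, which prevents the horospheres $E_\Omega(z_0,\xi_k,R_k)$ from either collapsing to $\{\xi\}$ or expanding to all of $\Omega$ in the limit.
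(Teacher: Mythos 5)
The paper does not give a proof of this theorem: it is quoted directly from Abate \cite{Ab2} (Theorem 0.6), so there is no internal argument to compare your attempt against. Treating your proposal on its own terms, there are genuine gaps, one structural and one more local.

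The structural gap is in Part 2, the extraction of the Wolff point. You propose to apply a ``Julia-type contraction along the finite orbit'' inside horospheres $E_\Omega(z_0,\xi_k,R_k)$ centered at the geodesic endpoints $\xi_k$ through $z_{n_k}$, then take a limit. But Julia's lemma (Proposition \ref{julialem}) requires $\xi_k$ to be a boundary regular \emph{fixed} point of $f$, which is precisely what you are trying to construct; there is no a priori reason that $f$ should map a horosphere centered at an arbitrary $\xi_k\in\partial\Omega$ into (or nearly into) itself, and the finite-orbit phrasing has no rigorous content. The standard route (Wolff in one variable, adapted by Abate to convex domains) does not go through geodesic endpoints of the orbit at all: one considers the shrunk maps $f_r(z):=(1-r)z_0+rf(z)$ for $0<r<1$, which by convexity send $\overline\Omega$ into a relatively compact subset of $\Omega$ and hence have interior fixed points $w_r$. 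Since $f$ has no interior fixed point, tautness forces $w_r\to\partial\Omega$, and along a subsequence $w_{r_k}\to\xi\in\partial\Omega$. Subtracting $k_\Omega(z_0,w_{r_k})$ from the non-expansion inequality $k_\Omega(f_{r_k}(z),w_{r_k})\le k_\Omega(z,w_{r_k})$ and passing to the limit using the existence of the Busemann-type limit \eqref{limiteesiste} yields $f(E_\Omega(z_0,\xi,R))\subset E_\Omega(z_0,\xi,R)$ for all $R>0$, and with it boundary regularity of $\xi$ and $\lambda_\xi\le1$. Your orbit-endpoint construction cannot substitute for this approximation step.

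The second gap is in Part 3. The shrinkage $f^n(z)\in E_\Omega(z_0,\xi,\lambda_\xi^nR(z))$ proves nothing when $\lambda_\xi=1$, so the parabolic case is not covered by your argument. The correct final step uses the forward invariance $f^n(E_\Omega(z_0,\xi,R))\subset E_\Omega(z_0,\xi,R)$ for all $R>0$ (not the shrinkage), the escape of orbits to $\partial\Omega$, and the strongly convex geometric fact $\overline{E_\Omega(z_0,\xi,R)}\cap\partial\Omega=\{\xi\}$; together these force $f^n\to\xi$ for all $\lambda_\xi\le1$. Your Part 1 sketch of orbit escape is in the right spirit but misstates the mechanism: the subsequential limit $g$ of $(f^{n_k})$ is a holomorphic retraction onto a possibly lower-dimensional limit manifold $M$, hence not an open self-map of $\Omega$; the rigidity conclusion is that $f|_M$ is an automorphism of $M$ whose iterates are not compactly divergent, hence $f|_M$ is contained in a compact subgroup of ${\rm Aut}(M)$, and an averaging (center-of-mass) argument produces an interior fixed point of $f$, contradicting the hypothesis. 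As written, the open-mapping claim and the ``backtracking'' step do not hold up.
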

This allows to partition the family of holomorphic self-maps of $\Omega$ as in the ball.
\begin{definition}
Let $\Omega\subset \C^q$ be a bounded strongly convex $C^3$ domain. 
A holomorphic self-map $f\colon \Omega\to \Omega$ is called {\sl elliptic} if it admits an interior fixed point. Otherwise it is called {\sl hyperbolic} if the dilation $\lambda_\xi$ at its  Denjoy--Wolff point satisfies $\lambda_\xi<1$, and it is called {\sl parabolic} if $\lambda_\xi=1$.
\end{definition}

\subsection{Squeezing function}
The squeezing function $S_\Omega\colon \Omega\to (0,1]$ of a bounded domain $\Omega\subset \mathbb C^q$  measures how much $\Omega$ resembles the ball $\B^q$.
\begin{definition}
Let $\Omega\subset \mathbb C^q$ be a bounded domain and $z\in\Omega$. If $\varphi:\Omega\rightarrow \B^q$ is an injective holomorphic function with $\varphi(z)=0$ we set
$$
S_{\Omega,\varphi}(z):=\sup\{r>0\colon B(0,r)\subset\varphi(\Omega)\},
$$
and
$$
S_\Omega(z):=\sup_\varphi\{S_{\Omega,\varphi}(z)\}.
$$
The function $S_\Omega:\Omega\rightarrow(0,1]$ is called the \emph{squeezing function} of the domain $\Omega$.
\end{definition}
By a normality argument it follows that the  sup is actually attained. 
\begin{prop}\label{maxattained}
\label{squeezing}
Let $\Omega\subset\mathbb C^q$ be a bounded domain and $z\in\Omega$. Then there exists an injective holomorphic map $\varphi\colon \Omega\rightarrow \B^q$, with $\varphi(z)=0$ such that 
$$
S_{\Omega,\varphi}(z)=S_\Omega(z).
$$
\end{prop}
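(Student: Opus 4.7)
The plan is to take a maximizing sequence and extract a normal-family limit, then check that the limit is still injective and still realizes a ball of radius $S_\Omega(z)$ around the origin inside its image.

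Concretely, let $s:=S_\Omega(z)$ and pick a sequence of injective holomorphic maps $\varphi_n\colon\Omega\to\B^q$ with $\varphi_n(z)=0$ and $S_{\Omega,\varphi_n}(z)\to s$. Since $\varphi_n(\Omega)\subset\B^q$, the family $(\varphi_n)$ is normal by Montel, so after passing to a subsequence $\varphi_n\to\varphi$ locally uniformly on $\Omega$ with $\varphi\colon\Omega\to\overline{\B^q}$ holomorphic. Because $\varphi(z)=0$ and $\Omega$ is connected, the maximum principle applied to $\|\varphi\|$ forces $\varphi(\Omega)\subset\B^q$.

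To see that the limit is still injective and has the required image, fix $0<r<s$. For $n$ large enough we have $r<S_{\Omega,\varphi_n}(z)$, so the inverses $\psi_n:=\varphi_n^{-1}\colon B(0,r)\to\Omega$ are well defined and satisfy $\psi_n(0)=z$. Since $\Omega$ is bounded, $(\psi_n)$ is a normal family on $B(0,r)$; extracting a further subsequence, $\psi_n\to\psi$ locally uniformly to some holomorphic $\psi\colon B(0,r)\to\overline\Omega$. The maximum principle again gives $\psi(B(0,r))\subset\Omega$, and passing to the limit in the identity $\varphi_n\circ\psi_n={\rm id}_{B(0,r)}$ yields $\varphi\circ\psi={\rm id}_{B(0,r)}$. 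In particular $B(0,r)\subset\varphi(\Omega)$ and $\det D\varphi(z)\neq0$.

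The step where one must be a bit careful is promoting this to global injectivity of $\varphi$ on $\Omega$. This is the only place where several-variable phenomena matter: by the Hurwitz-type theorem for injective holomorphic maps in several variables, a locally uniform limit of injective holomorphic maps is either injective or has Jacobian identically zero. Since we just showed $\det D\varphi(z)\neq0$, the second alternative is excluded and $\varphi$ is globally injective. Thus $\varphi$ is an admissible competitor for the definition of $S_\Omega(z)$, and $S_{\Omega,\varphi}(z)\geq r$ for every $r<s$, so $S_{\Omega,\varphi}(z)\geq s$; the reverse inequality is the definition of the supremum, and we get the equality claimed.
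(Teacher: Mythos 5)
Your overall strategy is the correct one and is exactly the ``normality argument'' the paper is alluding to: extract a normal-family limit $\varphi$ of a maximizing sequence $\varphi_n$, extract a limit $\psi$ of the inverses $\psi_n$ on balls $B(0,r)$ with $r<S_\Omega(z)$, and use a several-variable Hurwitz theorem to retain injectivity. The first maximum-principle step, $\varphi(\Omega)\subset\B^q$, is fine (though what one really applies is the maximum modulus principle to $w\mapsto\langle\varphi(w),\varphi(w_0)\rangle$ for a hypothetical $w_0$ with $\|\varphi(w_0)\|=1$, rather than to the non-holomorphic quantity $\|\varphi\|$). The several-variable Hurwitz-type theorem you invoke is also the right tool.

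The gap is the sentence ``The maximum principle again gives $\psi(B(0,r))\subset\Omega$.'' The maximum-principle argument works for the ball target because one can pair against a boundary direction of $\B^q$; for an \emph{arbitrary} bounded domain $\Omega$ there is no maximum principle forcing a holomorphic $\psi\colon B(0,r)\to\overline\Omega$ with $\psi(0)\in\Omega$ to avoid $\partial\Omega$. Indeed $\partial\Omega$ may very well meet ${\rm int}\,\overline\Omega$, and for $q>1$ a Hurwitz-type argument cannot be used to avoid such boundary points either (a single point has complex codimension $q>1$). This is exactly the delicate point in the reference proof (Deng--Guan--Zhang): one first establishes $\det D\varphi(z)\neq 0$ directly from Cauchy's estimates (on a fixed ball $B(z,\delta)\subset\Omega$ the quantities $|\det D\varphi_n(z)|$ are bounded above, and since $\det D\varphi_n(z)\cdot\det D\psi_n(0)=1$ with $|\det D\psi_n(0)|$ likewise bounded above by Cauchy on $\psi_n\colon B(0,r)\to\Omega$, both are also bounded below), so that $\varphi$ and $\psi$ are injective by Hurwitz, and only then argues that $B(0,r)\subset\varphi(\Omega)$, which requires a separate and less trivial argument. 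Your order of reasoning, where $\varphi\circ\psi={\rm id}$ (which already presupposes $\psi(B(0,r))\subset\Omega$) is used to deduce $\det D\varphi(z)\neq 0$, has this dependency backwards and hides the genuine difficulty. Note that for the paper's actual application $\Omega$ is strongly convex, so $\Omega={\rm int}\,\overline\Omega$ and the gap is harmless there; but the proposition as stated covers all bounded domains, and the step you label ``maximum principle again'' is precisely where the real work lies.
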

We will need the following result proved in \cite{DGZ}.
\begin{teo}\label{deng}
If $\Omega\subset \C^q$ is a bounded strongly pseudoconvex domain with $C^2$ boundary, then 
$$\lim_{z\to\partial\Omega}S_{\Omega}(z)=1.$$
\end{teo}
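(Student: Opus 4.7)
The plan is to prove Theorem \ref{deng} by a Pinchuk-type rescaling argument. Fix a sequence $(z_n)\subset \Omega$ with $z_n\to \partial\Omega$; the goal is to produce, for $n$ large, injective holomorphic maps $\varphi_n\colon \Omega\to \B^q$ with $\varphi_n(z_n)=0$ whose images contain Euclidean balls $B(0,r_n)$ with $r_n\to 1$, which will immediately yield $S_\Omega(z_n)\to 1$.

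For each $n$ I would let $\zeta_n\in \partial\Omega$ be a nearest boundary point to $z_n$ and set $\delta_n:=\|z_n-\zeta_n\|\to 0$; after passing to a subsequence, $\zeta_n\to \zeta\in\partial\Omega$. By strong pseudoconvexity and $C^2$-smoothness I can pick affine unitary changes of coordinates $A_n$ centered at $\zeta_n$, aligning the inward normal with the positive real axis of the first coordinate and diagonalizing the Levi form, so that $A_n(\Omega)$ is defined near the origin by
$$\rho_n(w)=-2\,\Re w_1+\|w'\|^2+o(\|w\|^2),$$
where the $o$-term is uniform in $n$ thanks to compactness of $\partial\Omega$, uniform positivity of the Levi form, and $C^2$ regularity. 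Applying the non-isotropic dilation $\Lambda_n(w_1,w'):=(w_1/\delta_n,\,w'/\sqrt{\delta_n})$ and setting $\psi_n:=\Lambda_n\circ A_n$, the point $\psi_n(z_n)$ approaches $(1,0)$, and the rescaled domains $\psi_n(\Omega)$ converge in the local Hausdorff sense (equivalently, in the Carath\'eodory sense of pointed domains) to the Siegel half-space $\Sigma:=\{\Re w_1>\|w'\|^2/2\}$, since the remainder in $\rho_n$ scales by a positive power of $\delta_n$ and vanishes in the limit.

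Finally, I would compose $\psi_n$ with a Cayley-type biholomorphism $C_n\colon \Sigma\to \B^q$ sending $\psi_n(z_n)$ to $0\in \B^q$; since $\psi_n(z_n)\to(1,0)$, the $C_n$ converge to a fixed biholomorphism $\Sigma\to\B^q$. The resulting injective holomorphic maps $\varphi_n:=C_n\circ\psi_n\colon \Omega\to \B^q$ satisfy $\varphi_n(z_n)=0$, and the Hausdorff convergence of pointed domains forces $B(0,r)\subset \varphi_n(\Omega)$ for any fixed $r<1$ and all large $n$. Hence $S_\Omega(z_n)\geq S_{\Omega,\varphi_n}(z_n)\geq r$ eventually, and letting $r\to 1$ yields $S_\Omega(z_n)\to 1$. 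The main technical obstacle is controlling the remainder in $\rho_n$ uniformly in $n$ after the anisotropic rescaling so as to guarantee true Hausdorff convergence (and hence that each fixed ball $B(0,r)$ eventually lies in $\varphi_n(\Omega)$); this is precisely where the uniform strong pseudoconvexity and the $C^2$ regularity of $\partial\Omega$ enter, yielding uniformly positive quadratic part and a uniform error estimate as $\zeta_n$ varies in $\partial\Omega$.
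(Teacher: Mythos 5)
First, a bibliographic point: the paper does not prove Theorem \ref{deng} at all — it cites Deng--Guan--Zhang \cite{DGZ} and uses the statement as a black box. So there is no ``paper's proof'' to compare against; what matters is whether your sketch would actually close the argument. It would not, because of a genuine gap at the very last step.

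The Pinchuk-type dilation does give you that the pointed domains $(\psi_n(\Omega),\psi_n(z_n))$ converge in the local Hausdorff (Carath\'eodory) sense to $(\Sigma,(1,0))$. That convergence has two halves, and you use only the easy one: the \emph{inner} convergence, i.e.\ that every compact $K\subset\Sigma$ is eventually contained in $\psi_n(\Omega)$. But to get a competitor for the squeezing function you must also have that $\varphi_n(\Omega)\subset\B^q$, i.e.\ that the whole scaled image $\psi_n(\Omega)$ sits inside a domain biholomorphic to the ball. Local Hausdorff convergence to $\Sigma$ does \emph{not} imply $\psi_n(\Omega)\subset\Sigma$ for any $n$ — typically it is false, because far from $\psi_n(z_n)$ the anisotropically rescaled image of a fixed bounded domain spills well outside the model half-space. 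Consequently the Cayley map $C_n\colon\Sigma\to\B^q$ is simply not defined on $\psi_n(\Omega)$, and $C_n\circ\psi_n$ is not a map $\Omega\to\B^q$. Your closing estimate ``$B(0,r)\subset\varphi_n(\Omega)$'' establishes the inner inclusion for a map that need not take values in $\B^q$ at all.

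This is not a side issue about uniformity of the $o(\|w\|^2)$ remainder; it is the central technical obstacle. The proofs in the literature (Deng--Guan--Zhang, and the subsequent $C^2$ treatments by Kim--Zhang and Nikolov--Andreev) devote most of their effort exactly to producing a globally defined injective map into a \emph{bounded} model domain $M_n\supset\psi_n(\Omega)$ that is biholomorphic to $\B^q$ and whose image of $\psi_n(z_n)$ is controlled, using either a globalized Narasimhan-type convexification or an exposing/osculation step (compare the role Fefferman's expansion and Anders\'en--Lempert jet interpolation play in Proposition \ref{changeofcoordinates} and Lemma \ref{coordinates} of this very paper, where the authors carefully arrange \emph{both} inclusions $E_{\B^q}(0,e_1,R)\subset\Omega$ \emph{and} $\Omega\cap B(e_1,\rho)\subset\B^q$). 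To repair your argument you would need to supply an outer containment of the scaled images in a shrinking family of ball-biholomorphic domains, which is precisely the missing idea.
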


\part{Forward iteration}

\section{Canonical Kobayashi hyperbolic semi-models}\label{canonicalforward}

In this section we construct a Canonical Kobayashi hyperbolic semi-model for a holomorphic self-map $f$ of a bounded domain $\Omega$, assuming that the squeezing function converges to 1 along  an  orbit. 
\begin{definition}
Let $X$ be a complex manifold and let $f\colon X\to X$ be a holomorphic self-map.
Let $x\in X$, and let $m\geq 1$. The {\sl forward $m$-step} $s_m(x)$ of $f$ at $x$ is the limit
$$s_m(x):=\lim_{n\to\infty}k_X(f^n(x), f^{n+m}(x)).$$ Such a limit exists since the sequence $(k_X(f^n(x), f^{n+m}(x)))_{n\geq 0}$ is non-increasing.
The {\sl divergence rate} $c(f)$ of $f$ is the limit 
\begin{equation}\label{defdivrate}
c(f):=\lim_{m\to\infty}\frac{k_X(f^m(x),x)}{m}.
\end{equation}
 It is shown in \cite{ArBr} that the limit above exists, does not depend on the point $x\in X$ and equals $\inf_{m\in \N}\frac{k_X(f^m(x),x)}{m}$.
\end{definition}
\begin{definition}\label{semimodel}
Let $X$ be a complex manifold and let $f\colon X\to X$ be a holomorphic self-map.
A {\sl semi-model} for $f$ is a triple  $(\Lambda,h,\v)$  where
$\Lambda$ is a complex manifold called the {\sl base space}, $h\colon X\to \Lambda$ is a holomorphic mapping, and $\v\colon \Lambda\to\Lambda$ is an automorphism such that
\begin{equation}
h\circ f=\v\circ h,
\end{equation}
and 
\begin{equation}\label{due}
\bigcup_{n\geq 0} \v^{-n}(h(X))=\Lambda.
\end{equation}

Let $(Z,\ell,\tau)$ and  $(\Lambda, h, \v)$ be two semi-models for the map $f$. A {\sl morphism of semi-models} 
$\hat\eta\colon (Z,\ell,\tau)\to (\Lambda, h, \v)$ is given by a  holomorphic map $\eta: Z\to \Lambda$ 
such that  the following diagram commutes:
\SelectTips{xy}{12}
\[ \xymatrix{X \ar[rrr]^h\ar[rrd]^\ell\ar[dd]^f &&& \Lambda \ar[dd]^\varphi\\
&& Z \ar[ru]^\eta \ar[dd]^(.25)\tau\\
X\ar'[rr]^h[rrr] \ar[rrd]^\ell &&& \Lambda\\
&& Z \ar[ru]^\eta}
\]
If the mapping $\eta \colon Z\to \Lambda$ is a biholomorphism, then we say that $\hat\eta\colon  (Z,\ell,\tau)\to (\Lambda, h, \v)$ is an {\sl isomorphism of semi-models}. Notice that then $\eta^{-1}\colon \Lambda\to Z$ induces a morphism $ {\hat\eta}^{-1}\colon  (\Lambda, h, \v)\to  (Z,\ell,\tau). $
\end{definition}

\begin{definition} 
Let $X$ be a complex manifold and let $f\colon X\to X$ be a holomorphic self-map. Let $(Z, \ell,\tau)$ be a semi-model for $f$  whose base space $Z$ is Kobayashi hyperbolic. We say that  $(Z, \ell,\tau)$ is a {\sl canonical Kobayashi hyperbolic semi-model} for $f$  if for any semi-model
$(\Lambda, h,\varphi )$ for $f$ such that the base space $\Lambda$ is Kobayashi hyperbolic,  there exists a unique   morphism of semi-models $\hat\eta\colon (Z, \ell,\tau)\to (\Lambda, h,\varphi )$.
 \end{definition}
 \begin{remark}\label{tango}
If $(Z, \ell,\tau)$ and $(\Lambda, h,\varphi )$ are two canonical Kobayashi hyperbolic semi-models for $f$, then they are isomorphic.
\end{remark}

In this section we prove the following  result.
\begin{teo}\label{astratto}
Let $\Omega\subset \C^{q}$ be a bounded domain, $f\colon \Omega\to \Omega$ be a holomorphic self-map and assume that there exists an orbit $(z_m)$ with $S_\Omega(z_m)\to 1$.
Then there exists a canonical Kobayashi hyperbolic semi-model $(\B^k,  \ell,\tau)$ for $f$ with $0\leq k\leq q$. Moreover, the following holds: 
\begin{enumerate}
\item for all $n\geq 0$, 
$$\lim_{m\to \infty}(f^m)^*k_\Omega=(\tau^{-n}\circ \ell)^*k_{\B^k},$$
\item the divergence rate of $\tau$ satisfies 
$$
c(\tau)=c(f)=\lim_{m\to \infty}\frac{s_m(x)}{m}=\inf_{m\in \N}\frac{s_m(x)}{m}.
$$
\end{enumerate}
\end{teo}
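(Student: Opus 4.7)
I plan to construct the semi-model by rescaling the dynamics of $f$ along the orbit $(z_m)$, exploiting the near-ball geometry provided by the squeezing function. For each $m\ge 0$, Proposition~\ref{maxattained} yields an injective holomorphic $\varphi_m\colon\Omega\to\B^q$ with $\varphi_m(z_m)=0$ and $B(0,r_m)\subset\varphi_m(\Omega)\subset\B^q$, where $r_m:=S_\Omega(z_m)\to 1$. I set
\[
h_m:=\varphi_m\circ f^m\colon\Omega\to\B^q,\qquad T_m:=\varphi_m\circ\varphi_{m+1}^{-1},
\]
so that $h_m(z_0)=0$ and the crucial intertwining identity $h_m\circ f=T_m\circ h_{m+1}$ holds. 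Since $k_{\B^q}(0,T_m(0))\le k_\Omega(z_m,z_{m+1})\le k_\Omega(z_0,z_1)$, with the analogous bound for $T_m^{-1}(0)$, Montel's theorem together with a diagonal extraction over an exhaustion of $\B^q$ by the balls $B(0,r_m)$ allows me to pass to a subsequence along which $h_m\to h$, $T_m\to\tau$ and $T_m^{-1}\to\tau^{-1}$ locally uniformly. Hence $\tau\in\mathrm{Aut}(\B^q)$ and $h\circ f=\tau\circ h$; moreover $h(\Omega)\subset\B^q$ because the plurisubharmonic function $\|h\|^2$ cannot equal $1$ anywhere in $\Omega$ given $h(z_0)=0$.

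The squeezing hypothesis then identifies the limit pseudometric. From $B(0,r_m)\subset\varphi_m(\Omega)\subset\B^q$ and the biholomorphicity of $\varphi_m$ onto its image,
\[
k_{\B^q}(w,w')\le k_{\varphi_m(\Omega)}(w,w')\le k_{B(0,r_m)}(w,w')\qquad\text{for }w,w'\in B(0,r_m),
\]
and $k_{B(0,r_m)}\to k_{\B^q}$ locally uniformly as $r_m\to 1$. Specializing to $w=h_m(x)$, $w'=h_m(y)$, which lie eventually in $B(0,r_m)$ on compacta of $\Omega$, and using $k_{\varphi_m(\Omega)}(h_m(x),h_m(y))=(f^m)^*k_\Omega(x,y)$, I conclude $\lim_m(f^m)^*k_\Omega=h^*k_{\B^q}$.

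To extract the base space, let $V$ be the intersection with $\B^q$ of the complex linear span of $h(\Omega)$ through the origin; then $V$ is a $k$-dimensional totally geodesic sub-ball, so $k_V=k_{\B^q}|_V$. Since $\tau(h(\Omega))=h(f(\Omega))\subset h(\Omega)\subset V$ and $\tau(V)$ is a $k$-dimensional totally geodesic sub-ball containing $h(\Omega)$, by minimality of $V$ we have $\tau(V)=V$, so $\tau|_V\in\mathrm{Aut}(\B^k)$. Setting $\ell:=h$ with target restricted to $V\cong\B^k$, property (1) follows from the identity of the previous paragraph combined with the isometry $(\tau^{-n})^*k_{\B^k}=k_{\B^k}$.

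The union condition $\bigcup_{n\ge 0}\tau^{-n}(\ell(\Omega))=\B^k$ requires an extra dynamical argument: in the non-elliptic case a Julia-type estimate provides a $\B^k$-horosphere at the Denjoy--Wolff point of $\tau$ lying inside $\ell(\Omega)$, whose $\tau^{-n}$-iterates exhaust $\B^k$; in the elliptic case the limit pseudometric collapses and $k=0$. Canonicality follows by a standard factoring argument: if $(\Gamma,g,\sigma)$ is any Kobayashi hyperbolic semi-model, then $g^*k_\Gamma\le(f^m)^*k_\Omega$ for every $m$, hence $g^*k_\Gamma\le\ell^*k_{\B^k}$ in the limit, so $\ell(x)=\ell(y)$ implies $g(x)=g(y)$ (using hyperbolicity of $\Gamma$), and defining $\eta(\tau^{-n}p):=\sigma^{-n}\eta(p)$ on $\tau^{-n}(\ell(\Omega))$ gives the unique morphism of semi-models. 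Property (2) is then obtained from (1) via the identity $s_m(z_0)=k_{\B^k}(0,\tau^m(0))$ and Fekete's subadditive lemma. The main obstacle is the union condition, which genuinely depends on the detailed dynamical type of the limiting automorphism $\tau$.
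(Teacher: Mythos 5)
Your rescaling framework (extract the squeezing maps $\varphi_m$, normalize at the orbit, and pass to a limit) is the right idea and matches the paper's strategy, but the key algebraic identity $h\circ f=\tau\circ h$ does not follow from the extraction you perform. You pass to a subsequence $S$ along which $h_m\to h$, $T_m\to\tau$, and $T_m^{-1}\to\tau^{-1}$. Taking the limit in $h_m\circ f=T_m\circ h_{m+1}$ along $m\in S$ yields $h\circ f=\tau\circ h'$, where $h'=\lim_{m\in S}h_{m+1}$. But $(h_{m+1})_{m\in S}$ is evaluated along the shifted index set $S+1$, which is not a subset of $S$, so $h'$ need not exist and, even if you force it to converge, there is no reason it should equal $h$: the only exact relation is $h'=\tau^{-1}\circ h\circ f$, which is precisely the statement you are trying to prove. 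The quantity $k_{\B^q}\bigl(T_m^{-1}(h_m(z)),h_{m+1}(z)\bigr)\le k_\Omega(f^m(z),f^{m+1}(z))$ tends to the positive forward step $s_1(z)$, so $T_m^{-1}\circ h_m$ and $h_{m+1}$ genuinely stay apart. The paper avoids this by building a \emph{nested} family of subsequences, one for each shift $n$, producing maps $\alpha_n=\lim_h \psi_{m_n(h)}\circ f^{m_n(h)-n}$ with the exact relations $\alpha_m\circ f^{m-n}=\alpha_n$ built in by construction, and only then extracting the retraction and the automorphism via the universal property. Your transition-map approach would need the same iterated extraction to make the limit of $h_{m+1}$ well defined and compatible; as written it skips the decisive step. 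This gap also propagates into your identification of the base space: $\tau(h(\Omega))=h(f(\Omega))$, hence $\tau(V)\subset V$ and the invariance of $V$, all rest on the unproven intertwining.

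A second, smaller gap is the union condition $\bigcup_{n\ge0}\tau^{-n}(\ell(\Omega))=\B^k$. You propose deriving it from a Julia-type estimate giving a horosphere of $\B^k$ contained in $\ell(\Omega)$, but nothing you have established shows that $\ell(\Omega)$ contains any full horosphere, and this would need a separate argument. The paper's proof of the union condition is purely local and does not invoke dynamics: it shows $\beta_{\nu(h)}=\alpha_{\nu(h)}\circ\psi_{\nu(h)}^{-1}$ converges to the identity on $Z$, hence is eventually injective with image containing any prescribed point of $Z$, which yields surjectivity of $\Psi$ and thus $\bigcup_n\alpha_n(\Omega)=Z$. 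You should replace the Julia sketch with this (or some equally explicit) argument; as it stands the exhaustion of $\B^k$ by $\tau^{-n}(\ell(\Omega))$ is asserted rather than proved.
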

\begin{remark}
By Theorem \ref{deng} the assumptions of the theorem are satisfied when $\Omega$ is bounded strongly pseudoconvex with $C^2$ boundary and $(z_m)$ converges to $\partial\Omega$. 
\end{remark}
The proof of Theorem \ref{astratto} is based on  the following result.
\begin{prop}\label{core}
Let $\Omega\subset \C^{q}$ be a bounded domain, $f\colon \Omega\to \Omega$ be a holomorphic self-map and assume that there exists an orbit $(z_m)$ such that $S_\Omega(z_m)\to 1$.
Then there exists a family of holomorphic maps $(\alpha_n\colon \Omega\to Z)$, where $Z$ is an holomorphic retract of $\B^q$, such that the following hold:
\begin{itemize}
\item[(a)] for all $m\ge n\ge 0$,
$$
\alpha_m\circ f^{m-n}=\alpha_n,
$$
\item[(b)]  for every $n\ge 0$ we have $\alpha_n(\Omega)\subset\alpha_{n+1}(\Omega)$ and
\begin{equation}\label{therewolf}
\bigcup_{n\in\mathbb N}\alpha_n(\Omega)=Z,
\end{equation}
\item[(c)]  for all $n\ge 0$,
\begin{equation}\label{ulula}
\lim_{m\to\infty}  (f^m)^*\, k_\Omega=\alpha_n^* \, k_Z,
\end{equation}
\item[(d)]  {\bf Universal property}: let $Q$ be a Kobayashi hyperbolic complex manifold and $(\gamma_n\colon \Omega\to Q)$ a family of holomorphic mappings satisfying $\gamma_m\circ f^{m-n}=\gamma_n$ for all  $m\geq n\geq 0$. Then there exists a unique holomorphic map $\Gamma\colon Z\to Q$ such that $\gamma_n=\Gamma\circ \alpha_n$ for all $n\geq 0$.
\end{itemize}
\end{prop}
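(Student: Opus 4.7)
My plan is to use the squeezing hypothesis to embed $\Omega$ into $\B^q$ near each point $z_n$ of the orbit, and then to extract diagonal limits of the rescaled compositions to produce the $\alpha_n$. By Proposition \ref{maxattained}, for every $n \geq 0$ I can choose an injective holomorphic map $\varphi_n\colon \Omega \to \B^q$ with $\varphi_n(z_n) = 0$ and $B(0, S_\Omega(z_n)) \subset \varphi_n(\Omega)$. The key rescaled maps are
$$
\psi_{n,m} := \varphi_m \circ f^{m-n}\colon \Omega \longrightarrow \B^q, \qquad m \geq n \geq 0,
$$
which all send $z_n$ to $0$ and satisfy the tower identity $\psi_{n,m} = \psi_{n',m} \circ f^{n'-n}$ for $n \leq n' \leq m$. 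Since $(\psi_{n,m})_m$ is a normal family in $\B^q$, a diagonal extraction produces a subsequence $m_k \to \infty$ along which, for every $n$, $\psi_{n,m_k}$ converges locally uniformly to a holomorphic map $\alpha_n\colon \Omega \to \B^q$; passing to the limit in the tower identity yields property (a) and the inclusion $\alpha_n(\Omega) \subset \alpha_{n+1}(\Omega)$ in (b).

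Next I would identify the limiting Kobayashi pseudo-distance. Since $\varphi_m$ is injective holomorphic,
$$
k_\Omega(f^{m-n}(x), f^{m-n}(y)) = k_{\varphi_m(\Omega)}(\psi_{n,m}(x), \psi_{n,m}(y)),
$$
and the contraction bound $k_{\varphi_m(\Omega)}(\psi_{n,m}(x), 0) \leq k_\Omega(x, z_n)$ keeps the image points in a compact $K \subset \B^q$ independent of $m$. Because $B(0, S_\Omega(z_m)) \subset \varphi_m(\Omega) \subset \B^q$ with $S_\Omega(z_m) \to 1$, the Kobayashi distance on $\varphi_m(\Omega)$ converges uniformly on $K \times K$ to the Kobayashi distance on $\B^q$, producing
$$
\lim_{m \to \infty} k_\Omega(f^{m-n}(x), f^{m-n}(y)) = k_{\B^q}(\alpha_n(x), \alpha_n(y)).
$$
Re-indexing $m \mapsto m - n$ on the left then gives the identity in (c), once $Z$ is realized as a subset of $\B^q$ with $k_Z = k_{\B^q}|_Z$.

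To produce $Z$ and its retract structure, I would set $W := \bigcup_n \alpha_n(\Omega)$ and extract a further diagonal subsequence so that the maps $r_n := \alpha_n \circ \varphi_n^{-1}$, defined on the exhausting domains $\varphi_n(\Omega) \supset B(0, S_\Omega(z_n))$ and all sending $0$ to $0$, converge locally uniformly on $\B^q$ to a holomorphic map $r\colon \B^q \to \overline{\B^q}$. Using the tower identity together with the uniform Kobayashi-metric convergence established above, one checks that $r$ takes values in $W$ and restricts to the identity on $W$, so $r$ is a holomorphic retraction of $\B^q$ onto $W$. Setting $Z := W$ with $k_Z := k_{\B^q}|_Z$ completes (b) and (c). For the universal property (d), the distance-decreasing inequality $k_Q(\gamma_n(x), \gamma_n(y)) \leq k_\Omega(f^{m-n}(x), f^{m-n}(y))$ passed to the limit gives $k_Q(\gamma_n(x), \gamma_n(y)) \leq k_Z(\alpha_n(x), \alpha_n(y))$, so Kobayashi hyperbolicity of $Q$ forces the assignment $\Gamma(\alpha_n(x)) := \gamma_n(x)$ to be well-defined; holomorphicity of $\Gamma$ follows from the submanifold structure of $W$, and uniqueness is automatic from $W = \bigcup_n \alpha_n(\Omega)$.

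The hard part will be verifying the retraction property $r|_W = \mathrm{id}_W$: the maps $r_n$ carry no a-priori compatibility with the individual $\alpha_n$, so some genuine work is needed to turn the asymptotic isometry of $\varphi_n$ near $z_n$ (guaranteed by $S_\Omega(z_n) \to 1$) into the required fixed-point property for $r$ on $W$.
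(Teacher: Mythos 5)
Your proposal takes essentially the same route as the paper: extract $\alpha_n$ as diagonal limits of the compositions $\varphi_m\circ f^{m-n}$, then extract a further limit $r$ of the rescalings $r_n:=\alpha_n\circ\varphi_n^{-1}$ and try to realize $Z$ as a holomorphic retract of $\B^q$. But the step you flag at the end as the ``hard part'' is in fact the routine one, and the real gap sits elsewhere. For $z=\alpha_j(x)\in W$ and any extracted index $m\ge j$ you have the \emph{exact} identity
$$\alpha_j(x)=\alpha_m\circ f^{m-j}(x)=r_m\bigl(\varphi_m\circ f^{m-j}(x)\bigr),$$
and since the inner term $\varphi_m\circ f^{m-j}(x)$ converges to $\alpha_j(x)$ while $r_m\to r$ locally uniformly, passing to the limit gives $r(z)=z$. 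This is precisely the paper's lemma $\alpha\circ\alpha_j=\alpha_j$, and it needs no further compatibility between the $r_n$ and the $\alpha_n$ beyond what the diagonal extraction already provides; idempotence of $r$ then follows by applying it to $j=m$ and taking the limit.

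The genuine gap is the other inclusion you assert in passing, namely that $r$ \emph{takes values in} $W=\bigcup_n\alpha_n(\Omega)$, i.e.\ that every point of the retract $r(\B^q)$ is realized as $\alpha_n(x)$ for some $n$ and $x\in\Omega$. This does not follow from the limit identities alone; it needs a Hurwitz-type open-mapping argument: on a small relatively compact neighbourhood $U\subset r(\B^q)$ of a given point, the restrictions $r_n|_U=\alpha_n\circ\varphi_n^{-1}|_U$ converge uniformly to the identity, hence are eventually injective and their images eventually contain the given point, which therefore lies in some $\alpha_n(\Omega)$. Without this surjectivity step you cannot conclude $Z=W$, nor that $W$ is a closed complex submanifold of $\B^q$ (the ``submanifold structure of $W$'' you invoke for the holomorphicity of $\Gamma$ in (d) is itself a consequence of $W$ being a retract, which needs $r(\B^q)\subset W$). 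The paper reuses exactly the same local-biholomorphism observation to express $\Gamma$ locally as $\gamma_{m'}\circ\varphi_{m'}^{-1}\circ\sigma$ and deduce holomorphy. In short: your outline is correct and matches the paper, but the Hurwitz/degree argument behind ``$r$ takes values in $W$'' is the key missing ingredient, while the retraction identity $r|_W=\mathrm{id}_W$ that worries you is actually straightforward.
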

\begin{remark}
Such family $(\alpha_n)$ is a canonical Kobayashi hyperbolic direct limit for the sequence of iterates $(f^{m-n}\colon\Omega\to\Omega)$, see \cite[Definition 2.7]{Ar}.
\end{remark}
Once Proposition \ref{core} is proved, the proof of Theorem \ref{astratto} is the same as that of \cite[Theorem 4.6]{Ar}. We  present a sketch of the construction of the semi-model for the convenience of the reader.

\begin{proof}[Proof of Theorem \ref{astratto}]
Define $\ell:=\alpha_0$ and $\gamma_n:=\alpha_n\circ f$. It is not hard to show that $(\gamma_n\colon\Omega\rightarrow Z)$ is a family of holomorphic mappings satisfying $\gamma_m\circ f^{m-n}=\gamma_n$ for all $m\ge n\ge 0$. Therefore by the universal property of the family $(\alpha_n)$ there exists a unique holomorphic map $\tau\colon Z\rightarrow Z$ such that for all $n\ge 0$,
$$
\tau\circ\alpha_n=\gamma_n=\alpha_n\circ f,
$$
in particular $\tau\circ \ell=\ell\circ f$.

Similarly if we define $\tilde\gamma_n:=\alpha_{n+1}$ we obtain a holomorphic map $\delta:Z\rightarrow Z$ such that $\tilde\gamma_n=\delta\circ \alpha_n$ for all $n\ge 0$. It is easy to see that
$$
\tau\circ\delta\circ\alpha_n=\delta\circ\tau\circ\alpha_n=\alpha_n.
$$
By the universal property of the family $(\alpha_n)$ described in Proposition \ref{core}, we conclude that $\delta=\tau^{-1}$, proving that $\tau$ is an automorphism of $Z$.
Since for all $n\geq 0$, $$\tau^n\circ \alpha_n=\alpha_n\circ f^n=\ell,$$ it follows that $\alpha_n=\tau^{-n}\circ \ell$.
The triple $(Z,\ell,\tau)$ is a semi-model thanks to \eqref{therewolf} Notice that $Z$ being a holomorphic retract of $\B^q$, it is biholomorphic to a ball of dimension $0\le k\le q$. 
The universal property of the canonical Kobayashi hyperbolic semi-model $(Z,\ell,\tau)$ is a direct consequence of the universal property of the family $(\alpha_n)$. 

Point (1) follows immediately from (\ref{ulula}), hence we are left with proving  (2). From \cite[Proposition 2.7]{ArBr} it follows that $c(f)=\lim_{m\to \infty}\frac{s_m(x)}{m}=\inf_{m\in \N}\frac{s_m(x)}{m}$.  Equation (\ref{ulula}) immediately gives  the following formula for the forward $m$-step, which implies $(2)$,
$$s_m(x)=k_Z(\alpha_0(x),\alpha_0\circ f^m(x))=k_Z(\ell(x), \tau^{m}\circ\ell(x)).$$

\end{proof}

The proof of Proposition \ref{core} is articulated  in several intermediate lemmas. Let $(z_m)$ be an $f$-orbit in $\Omega$ with $S_\Omega(z_m)\to 1$. By Proposition \ref{maxattained} there exists a sequence $(\psi_m\colon \Omega\rightarrow \B^q)$ of holomorphic injective maps with $\psi_m(z_m)=0$ and
$$
\psi_m(\Omega)\supset B(0,S_\Omega(z_m)).
$$
Notice that for every compact subset $K\subset \B^q$ the inverse map $\psi_m^{-1}$ is defined on $K$ for $m$ sufficiently large.

Since $\psi_{m}\circ f^m(z_0)=0$ for all $m\ge 0$ and since $\B^q$ is taut, there exists a subsequence $(m_0(h))$ such that the sequence $(\psi_{m_0(h)}\circ f^{m_0(h)} )$ converges uniformly on compact subsets to a holomorphic map $\alpha_0\colon \Omega\to \B^q$ and $\alpha_0(z_0)=0$. Similarly, there exists a subsequence  $(m_1(h))$ of $(m_0(h))$ such  that the sequence $(\psi_{m_1(h)}\circ f^{m_1(h)-1} )$ converges uniformly on compact subsets to a holomorphic map $\alpha_1\colon \Omega\to \B^q$ and $\alpha_1(z_1)=0$.
Iterating this procedure we obtain a family of subsequences $\{(m_n(h))_{h\geq 0}\}_{n\geq 0}$ and a family of holomorphic maps
$$(\alpha_n\colon \Omega\to \B^q)_{n\geq 0}$$ 
such that  $$\psi_{m_n(h)}\circ f^{m_n(h)-n}  \stackrel{h\to\infty}\longrightarrow \alpha_n$$ uniformly on compact subsets and $\alpha_n(z_n)=0$.
Notice that for all $m\geq n\geq 0$,
\begin{equation}\label{jonsnow}
\alpha_{m}\circ f^{m-n}=\alpha_{n}.
\end{equation}
Let $\nu(h):=m_h(h)$ be the diagonal subsequence, which for all $j\geq 0$ is eventually a subsequence of $(m_j(h))_{h\geq 0}.$

Consider the sequence $\beta_{\nu(h)}:=\alpha_{\nu(h)}\circ \psi_{\nu(h)}^{-1}$. Given a compact subset $K\subset \B^q$ and $h$ large enough, the map $\beta_{\nu(h)}$ is well defined on $K$ and $\beta_{\nu(h)}(K)\subset \B^q$. Notice that  $\beta_{\nu(h)}(0)=0$  for all $h\geq 0$. By the tautness of $\B^q$ up to extracting a further subsequence of $\nu(h)$ we have that the sequence $(\beta_{\nu(h)})$ converges uniformly on compact subsets to a holomorphic map $\alpha\colon \B^q\to \B^q.$

\begin{lemma}
For all $j\geq 0$,
\begin{equation}\label{stabilizza}
\alpha\circ \alpha_j=\alpha_j.
\end{equation}
\end{lemma}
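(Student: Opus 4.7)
The idea is to exploit the diagonal subsequence $\nu(h)$ and rewrite $\alpha_j$ in two compatible ways so that $\alpha$ appears as the outer factor of the composition in the limit.

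First I would use the compatibility relation \eqref{jonsnow} with $m=\nu(h)$ (valid for all $h$ large enough that $\nu(h)\geq j$): this gives
$$\alpha_{\nu(h)}\circ f^{\nu(h)-j}=\alpha_j.$$
Inserting $\psi_{\nu(h)}^{-1}\circ\psi_{\nu(h)}={\sf id}_\Omega$ and recalling that $\beta_{\nu(h)}=\alpha_{\nu(h)}\circ\psi_{\nu(h)}^{-1}$, this can be rewritten as
$$\beta_{\nu(h)}\circ\bigl(\psi_{\nu(h)}\circ f^{\nu(h)-j}\bigr)=\alpha_j.$$
The entire identity $\alpha\circ\alpha_j=\alpha_j$ will then fall out by letting $h\to\infty$ on both factors of the left-hand side.

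The key observation allowing to pass to the limit is that $\nu(h)=m_h(h)$ is, for every fixed $j$, eventually a subsequence of $(m_j(h))_{h\geq 0}$; hence
$$\psi_{\nu(h)}\circ f^{\nu(h)-j}\xrightarrow{h\to\infty}\alpha_j$$
uniformly on compact subsets of $\Omega$, by the construction of $\alpha_j$. On the other hand, by construction $\beta_{\nu(h)}\to\alpha$ uniformly on compact subsets of $\B^q$.

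Finally I would check that the composition converges: given a compact $K\subset\Omega$, the image $\alpha_j(K)$ is a compact subset of $\B^q$, and by the uniform convergence on $K$ the sets $(\psi_{\nu(h)}\circ f^{\nu(h)-j})(K)$ are eventually contained in a slightly enlarged compact set $K'\Subset\B^q$; on $K'$, $\beta_{\nu(h)}$ converges uniformly to $\alpha$. A standard $\varepsilon$-argument (uniform convergence composed with equi-continuous maps into a compact set) then yields
$$\beta_{\nu(h)}\circ\bigl(\psi_{\nu(h)}\circ f^{\nu(h)-j}\bigr)\xrightarrow{h\to\infty}\alpha\circ\alpha_j$$
uniformly on $K$, so the limit of the left-hand side is $\alpha\circ\alpha_j$ and the limit of the right-hand side is $\alpha_j$, giving \eqref{stabilizza}. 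No serious obstacle is expected here; the only point that requires a touch of care is the two-factor limit passage, which is purely a matter of combining uniform convergence with the compactness of $\alpha_j(K)$.
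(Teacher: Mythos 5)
Your proof is correct and takes essentially the same route as the paper's: it inserts $\psi_{\nu(h)}^{-1}\circ\psi_{\nu(h)}$ into the relation $\alpha_{\nu(h)}\circ f^{\nu(h)-j}=\alpha_j$, identifies the two factors with $\beta_{\nu(h)}$ and $\psi_{\nu(h)}\circ f^{\nu(h)-j}$, and passes to the limit using that $\nu(h)$ is eventually a subsequence of $(m_j(h))$. The only difference is that you spell out the compact-uniform argument for the composed limit, while the paper states the same passage pointwise; the underlying mechanism is identical.
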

\begin{proof}
Let $z\in \Omega$. For all positive integers $h$ such that $\nu (h)\geq j$, we have, using (\ref{jonsnow}),
$$\alpha_j(z)=(\alpha_{\nu(h)}\circ f^{\nu(h)-j})(z)=(\alpha_{\nu(h)}\circ \psi_{\nu(h)}^{-1}\circ \psi_{\nu(h)}\circ f^{\nu(h)-j}) (z)\stackrel{h\to\infty}\longrightarrow (\alpha\circ \alpha_j)(z).$$
\end{proof}
\begin{lemma}
The map $\alpha\colon  \B^q\to \B^q$ is a holomorphic retraction, that is
$$\alpha\circ \alpha=\alpha.$$
\end{lemma}
\begin{proof}
Let $z\in \B^q$.
From (\ref{stabilizza}) 
 we get, for all $h\geq 0$ big enough,
$$(\alpha\circ\beta_{\nu(h)})(z)=(\alpha\circ \alpha_{\nu(h)}\circ\psi_{\nu(h)}^{-1})(z)=(\alpha_{\nu(h)}\circ\psi_{\nu(h)}^{-1})(z)=\beta_{\nu(h)}(z),$$
and the result follows since $\beta_{\nu(h)}\to \alpha$.
\end{proof}
Define $Z:=\alpha(\B^q)$. Being a holomorphic retract, it is a closed complex submanifold of $\B^q$, biholomorphic to a $k$-dimensional ball $\B^k$, with $0\leq k\leq q$.
By \eqref{stabilizza} it follows that, for all $j\geq 0$, $$\alpha_j(\Omega)\subset Z.$$

Let $(A, \Lambda_{n})$ be the direct limit of the dynamical system $(f^n\colon \Omega\to\Omega)$. Recall that
$A:=(\Omega\times \mathbb N)/_\sim,$
where $(x,n)\sim(y,u)$ if and only if $f^{m-n}(x)=f^{m-u}(y)$ for $m$ large enough, and  the equivalence class of $(x,n)$ is denoted by $[x,n]$. The map $\Lambda_n:\Omega\to A$ is defined by $\Lambda_n(x)=[x,n]$.

By the universal property of direct limits, there exists a unique map $\Psi:A\to Z$
such that, for all $n\geq 0$,
$$
\alpha_{n}=\Psi\circ\Lambda_{n}.
$$
The mapping $\Psi$ sends the point $[x,n]\in A$ to $\alpha_n(x)$. Define on $A$ the following equivalence relation:
$$
[x,n]\simeq [y,u]\quad \iff \quad k_{\Omega}(f^{m-n}(x),f^{m-u}(y))\stackrel{m\to \infty}\longrightarrow 0.
$$
	
\begin{lemma}\label{grosso}
The map $\Psi\colon A\to Z$ is surjective and $\Psi([x,n])=\Psi([y,u])$ if and only if $[x,n]\simeq [y,u]$.
\end{lemma}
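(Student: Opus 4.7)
My plan is to prove the two claims separately. Fix throughout the notation from the preceding discussion: $\alpha_n$, the subsequences $\nu(h)$, the maps $\beta_{\nu(h)}=\alpha_{\nu(h)}\circ\psi_{\nu(h)}^{-1}$, their limit $\alpha=\lim_h \beta_{\nu(h)}$, and $Z=\alpha(\B^q)$. I will treat the characterization of the fibers of $\Psi$ first, then surjectivity.

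For the fibers, given representatives $(x,n)$ and $(y,u)$ I may assume $u\geq n$ and replace $x$ by $f^{u-n}(x)$; this uses $[x,n]=[f^{u-n}(x),u]$ in $A$ and $\alpha_n(x)=\alpha_u(f^{u-n}(x))$ from (\ref{jonsnow}). So it suffices to treat the case $n=u$ and prove that $\alpha_u(x)=\alpha_u(y)$ if and only if $k_\Omega(f^{m-u}(x),f^{m-u}(y))\to 0$ as $m\to\infty$. The easy implication follows from distance decreasing of holomorphic maps: using (\ref{jonsnow}) and the fact that $k_Z=k_{\B^q}|_Z$ (which holds for any holomorphic retract of $\B^q$), one has $k_Z(\alpha_u(x),\alpha_u(y))=k_Z(\alpha_m(f^{m-u}(x)),\alpha_m(f^{m-u}(y)))\leq k_\Omega(f^{m-u}(x),f^{m-u}(y))$, and the right-hand side vanishing forces $\alpha_u(x)=\alpha_u(y)$. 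For the converse, assume $\alpha_u(x)=\alpha_u(y)=:w$. Then the convergence $\psi_{\nu(h)}\circ f^{\nu(h)-u}\to\alpha_u$ on compact subsets of $\Omega$ forces both $p_h:=\psi_{\nu(h)}(f^{\nu(h)-u}(x))$ and $q_h:=\psi_{\nu(h)}(f^{\nu(h)-u}(y))$ to converge to the interior point $w\in Z\subset\B^q$. Since $\psi_{\nu(h)}$ is a biholomorphism of $\Omega$ onto its image and $B(0,S_\Omega(z_{\nu(h)}))\subset \psi_{\nu(h)}(\Omega)$, this yields $k_\Omega(f^{\nu(h)-u}(x),f^{\nu(h)-u}(y))=k_{\psi_{\nu(h)}(\Omega)}(p_h,q_h)\leq k_{B(0,S_\Omega(z_{\nu(h)}))}(p_h,q_h)$, and the right-hand side tends to zero because $S_\Omega(z_{\nu(h)})\to 1$ while $p_h,q_h$ collapse to a common interior point of $\B^q$. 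Monotonicity of $m\mapsto k_\Omega(f^m(x),f^m(y))$ then upgrades subsequential convergence to convergence along all $m\to\infty$.

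For surjectivity, pick $w\in Z$ and $v\in\B^q$ with $\alpha(v)=w$; such a $v$ exists since $Z=\alpha(\B^q)$. The retraction $\alpha\colon\B^q\to Z$ is a holomorphic submersion onto the submanifold $Z$, so I would choose a local holomorphic section $\sigma\colon W\to\B^q$ of $\alpha$ on a small neighborhood $W\subset Z$ of $w$ with $\sigma(w)=v$. The composition $\beta_{\nu(h)}\circ\sigma\colon W\to Z$ takes values in $Z$, since $\alpha\circ\beta_{\nu(h)}=\beta_{\nu(h)}$ by (\ref{stabilizza}), and it converges uniformly on compact subsets of $W$ to $\alpha\circ\sigma=\mathrm{id}_W$. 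By a standard inverse-function-theorem argument inside $Z\cong\B^k$, for $h$ sufficiently large $\beta_{\nu(h)}\circ\sigma$ is a local biholomorphism near $w$ whose image contains $w$. Consequently $w\in\beta_{\nu(h)}(\B^q)\subset\alpha_{\nu(h)}(\Omega)$, giving surjectivity of $\Psi$.

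The main obstacle is this last surjectivity step: upgrading from $w$ being a limit of points in $\bigcup_n\alpha_n(\Omega)$ to $w$ actually belonging to some $\alpha_n(\Omega)$ requires quantitative information beyond mere pointwise convergence, which is why I rely on the submersion structure of the retraction $\alpha$ together with inverse-function-theorem stability inside the target ball $\B^k$. The fiber characterization, by contrast, is more mechanical once one records that $k_Z=k_{\B^q}|_Z$ and that the squeezing hypothesis provides the isometric comparison between $k_\Omega$ and $k_{\B^q}$ along the orbit.
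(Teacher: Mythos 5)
Your argument is correct and takes essentially the same route as the paper: the fiber characterization combines distance-decreasing with the squeezing hypothesis exactly as in the paper's proof, and surjectivity is obtained from the uniform convergence of $\beta_{\nu(h)}$ to $\alpha$ on compacta together with a Hurwitz/inverse-function-theorem argument. The local section $\sigma$ is unnecessary machinery here: since $w\in Z$ and $\alpha|_Z=\mathrm{id}_Z$, one can simply take $\sigma$ to be the inclusion of a relatively compact neighborhood $U\subset Z$ of $w$ into $\B^q$, which recovers the paper's direct use of $\beta_{\nu(h)}|_U\to\mathrm{id}_U$.
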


\begin{proof}
We first prove surjectivity. We have to prove that for all $z\in Z$ there
exists $x\in \Omega$ and $n\geq 0$  such that  $\alpha_{n}(x)=z$.
Let $U\subset Z$ be a relatively compact neighborhood in $Z$ of $z$.
The sequence  $(\beta_{\nu(h)}|_U)=(\alpha_{\nu(h)}\circ\psi_{\nu(h)}^{-1}|_{U})$ is well defined for $h$  big enough and converges uniformly to $\alpha|_U={\sf id}_U$, and therefore is eventually injective and its image eventually contains $z$.

If $[x,n]\simeq [y,u]$, then since the Kobayashi distance is non-expansive with respect to holomorphic maps, we have
$$k_Z(\Psi[x,n], \Psi[y,u])=k_{Z}(\alpha_{m}\circ f^{m-n}(x),\alpha_{m}\circ f^{m-u}(y))\leq k_{\Omega}(f^{m-n}(x),f^{m-u}(y))\stackrel{m\to \infty}\longrightarrow 0.$$
As $Z$ is Kobayashi hyperbolic, it follows that $\Psi[x,n]= \Psi[y,u]$.

Conversely, assume that $\Psi([x,n])=\Psi([y,u])$,  and fix $j\geq \max\{n,u\}$. We have
$$\alpha_{j}\circ f^{j-n}(x)=\alpha_{j}\circ f^{j-u}(y).$$
By definition of the map $\alpha_{j}$ it follows that 
$$\lim_{h\to\infty} \psi_{m_j(h)}\circ f^{m_j(h)-n}(x)=\lim_{h\to\infty} \psi_{m_j(h)}\circ f^{m_j(h)-u}(y).$$
We claim that this implies that $[x,n]\simeq [y,u]$.
Notice that, since the sequence
$$\big(k_\Omega(f^{m-n}(x), f^{m-u}(y))\big)_{m\geq \max\{n,u\}}$$ is decreasing, 
 it suffices to show that $$k_{\Omega}(f^{m_j(h)-n}(x),f^{m_j(h)-u}(y))\stackrel{h\to \infty}\longrightarrow 0.$$
Denote $z_h:= \psi_{m_j(h)}\circ f^{m_j(h)-n}(x)$ and $w_h:= \psi_{m_j(h)}\circ f^{m_j(h)-u}(y)$. Then the sequences $(z_h)$ and $(w_h)$ converge to the same point $a\in \B^q$. 

Let $B\subset \subset \B^q$ be a ball centered in $a$. When $h$ is sufficiently large we have that $\psi_{m_j(h)}^{-1}\colon B\rightarrow\Omega$ is well defined and since the Kobayashi distance is not expanding, we conclude that
$$
k_{\Omega}(f^{m_j(h)-n}(x),f^{m_j(h)-u}(y))\le k_{B}(z_h,w_h)\to 0.
$$
 \end{proof}

\begin{lemma}\label{x4}
For all $n\geq 0$,
\begin{equation*}
\lim_{m\to\infty}  (f^m)^*\, k_\Omega=\alpha_n^* \, k_Z.
\end{equation*}
\end{lemma}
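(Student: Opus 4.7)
The plan is to prove both inequalities $k_Z(\alpha_n(x), \alpha_n(y)) \le \lim_{m\to\infty} k_\Omega(f^m(x), f^m(y)) \le k_Z(\alpha_n(x), \alpha_n(y))$ separately, for arbitrary $x,y \in \Omega$. The middle limit exists because the sequence $m \mapsto k_\Omega(f^m(x), f^m(y))$ is non-increasing (the Kobayashi distance is non-expansive under the holomorphic map $f$), and hence coincides with every subsequential limit.

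The first inequality follows immediately from (\ref{jonsnow}): for $m \ge n$, rewriting $\alpha_n = \alpha_m \circ f^{m-n}$ and applying non-expansiveness of $\alpha_m \colon \Omega \to Z$ yields
$$k_Z(\alpha_n(x), \alpha_n(y)) \le k_\Omega(f^{m-n}(x), f^{m-n}(y)),$$
and letting $m \to \infty$ concludes.

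For the reverse inequality, the idea is to exploit the very sequence of squeezing maps $\psi_{m_n(h)}$ that produced $\alpha_n$. Set $r_h := S_\Omega(z_{m_n(h)})$, so $r_h \to 1$ by hypothesis, and define $u_h := \psi_{m_n(h)}(f^{m_n(h)-n}(x))$ and $v_h := \psi_{m_n(h)}(f^{m_n(h)-n}(y))$. By construction $u_h \to \alpha_n(x)$ and $v_h \to \alpha_n(y)$ in $\B^q$, so $u_h, v_h \in B(0, r)$ for some fixed $r < 1$ once $h$ is large; since $r_h \to 1$, we then have $u_h, v_h \in B(0, r_h) \subset \psi_{m_n(h)}(\Omega)$. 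Applying $\psi_{m_n(h)}^{-1}\colon B(0, r_h) \to \Omega$, which is non-expansive for the Kobayashi distances, gives
$$k_\Omega(f^{m_n(h)-n}(x), f^{m_n(h)-n}(y)) \le k_{B(0, r_h)}(u_h, v_h).$$

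The step requiring the most care is the passage to the limit on the right-hand side as $r_h \to 1$. Using that scaling by $1/r_h$ is a biholomorphism $B(0, r_h) \cong \B^q$, one has $k_{B(0, r_h)}(u_h, v_h) = k_{\B^q}(u_h/r_h, v_h/r_h)$; since $u_h/r_h \to \alpha_n(x)$, $v_h/r_h \to \alpha_n(y)$, and $k_{\B^q}$ is continuous on $\B^q \times \B^q$, this converges to $k_{\B^q}(\alpha_n(x), \alpha_n(y))$. Finally, because $Z$ is a holomorphic retract of $\B^q$ (the inclusion $Z \hookrightarrow \B^q$ gives one inequality and the retraction $\alpha\colon \B^q \to Z$ the other), we have $k_Z = k_{\B^q}|_{Z \times Z}$, so the limit equals $k_Z(\alpha_n(x), \alpha_n(y))$. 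Taking $h \to \infty$ in the inequality above, and using that the convergent sequence $k_\Omega(f^m(x), f^m(y))$ agrees with its subsequential limit along $m_n(h)-n \to \infty$, concludes.
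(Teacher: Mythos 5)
Your proof is correct and follows essentially the same strategy as the paper: the $\geq$ inequality via non-expansiveness of $\alpha_m$ and the identity $\alpha_n = \alpha_m\circ f^{m-n}$, and the $\leq$ inequality via the injective maps $\psi_{m_n(h)}$ and their inverses on expanding balls. Your use of the explicit rescaling $k_{B(0,r_h)}(u_h,v_h)=k_{\B^q}(u_h/r_h,v_h/r_h)$ to pass to the limit is a slightly cleaner formulation of the paper's $\epsilon$-$h_0$ argument on a fixed ball $B(0,r)$, but it is the same underlying idea.
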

\begin{proof}
For all $m\geq n\ge 0$ we have $$k_Z(\alpha_n(x),\alpha_n(y))=k_Z(\alpha_m\circ f^{m-n}(x),\alpha_m\circ f^{m-n}(y))\leq k_\Omega(f^{m-n}(x), f^{m-n}(y)).$$
Hence $k_Z(\alpha_n(x),\alpha_n(y))\leq \lim_{m\to\infty}k_\Omega(f^{m}(x), f^{m}(y))$.

To obtain the inverse inequality, denote $z_h:= \psi_{m_n(h)}\circ f^{m_n(h)-n}(x)$ and $w_h:= \psi_{m_n(h)}\circ f^{m_n(h)-n}(y)$. Then  $(z_h)$ converges to $\alpha_n(x)$ and $(w_h)$ converges to  $\alpha_n(y)$. 
Fix $\epsilon>0$, then there exist a ball $B=B(0,r)\subset \B^q$, with radius close enough to $1$ such that it contains both $\alpha_n(x),\alpha_n(y)$, and  such that for some $h_0\ge0$ we have 
$$
k_{B}(z_h, w_h)\leq k_{\B^q}(\alpha_n(x),\alpha_n(y))+\epsilon,\qquad \forall h\ge h_0.
$$
Let $h_1\geq 0$ be such that for all $h\geq h_1$ we have that $\psi_{m_n(h)}(\Omega) \supset B$. Then for all $h\geq \max\{h_0,h_1\}$ we have 
$$
k_{\Omega}(f^{m_n(h)-n}(x), f^{m_n(h)-n}(y))\leq k_B(z_h,w_h)\le k_{\B^q}(\alpha_n(x),\alpha_n(y))+\epsilon.$$
proving that for every $\varepsilon>0$ we have
$$
\lim_{m\to\infty}k_{\Omega}(f^m(x), f^{m}(y))\le k_{Z}(\alpha_n(x),\alpha_n(y))+\epsilon,
$$
where we used the fact that $\alpha_n(x),\alpha_n(y)\in Z$ and that $k_{\B^q}|_Z=k_Z$.
\end{proof}

We are now ready to prove  Proposition \ref{core}. Points $(a)$ and $(c)$ correspond precisely to \eqref{jonsnow} and Lemma \ref{x4}. By \eqref{jonsnow} it is clear that $\alpha_n(\Omega)=\alpha_{n+1}( f(\Omega))\subset \alpha_{n+1}(\Omega)$, and  by Lemma \ref{grosso} we obtain that the union of the sets $\alpha_n(\Omega)$ coincides with $Z$, which proves point $(b)$.

It remains to prove the universal property $(d)$. Let $Q$ be a Kobayashi hyperbolic complex manifold and let $(\gamma_n\colon \Omega\to Q)$ be a family of holomorphic maps satisfying  $\gamma_m\circ f^{m-n}=\gamma_n$ for all  $m\geq n\geq 0$. By the universal property of the direct limit,  there exists a unique map $\Phi\colon A\to Q$ such that $\gamma_n=\Phi\circ \Lambda_n$ for all $n\geq 0$. The map $\Phi$ passes to the quotient to a map  $\hat \Phi\colon A/_\simeq\to Q$.
Indeed, if $[(x,n)]\simeq [(y,u)]$, for all $m\geq n,u$ we have that $\Phi([(x,n)])=\gamma_m\circ f^{m-n}(x)$ and $\Phi([(y,u)])=\gamma_m\circ f^{m-u}(y)$. Hence $$k_{Q}(\Phi([(x,n)]),\Phi([(y,u)]))\leq k_\Omega(f^{m-n}(x),f^{m-u}(y))\stackrel{m\rightarrow\infty} \longrightarrow 0,$$
 and thus $\Phi([(x,n)])=\Phi([(y,u)])$.
Set $$\Gamma\coloneqq \hat\Phi\circ {\hat\Psi}^{-1}\colon Z\to Q.$$
The mapping $\Gamma $ acts in the following way: if $z\in Z$, then  
there exists $x\in \Omega$ and $n\geq 0$ such that $\alpha_n(x)=z$, and then $\Gamma(z)=\gamma_n(x).$ It is thus clear that it is the unique map satisfying $\Gamma\circ \alpha_n=\gamma_n$ for all $n\geq 0$. The map $\Gamma$ is holomorphic. Indeed, if $z\in Z$,  by the proof of Lemma \ref{grosso}, there exist a neighborhood $U$ of  $z$ in $Z$, a point $w\in U$ and $m'\geq 0$  such that $(\alpha_{m'}\circ\psi_{m'}^{-1}|_U\colon U\to Z)$ is defined, holomorphic and  injective and $\alpha_{m'}\circ\psi_{m'}^{-1}(w)=z$.   Thus there exists an open neighborhood $V\subset Z$ of $z$ and a holomorphic map $\sigma\colon V\to U$ such that 
 $$\alpha_{m'}\circ\psi_{m'}^{-1}\circ\sigma={\sf id}_V.$$
 Then, for all $y\in V$,
  $$\Gamma(y)=\Gamma\circ\alpha_{m'}\circ\psi_{m'}^{-1}\circ\sigma(y)=\gamma_{m'}\circ\psi_{m'}^{-1}\circ\sigma(y),$$
that is, $\Gamma$ is holomorphic in $V$, which concludes the proof of $(d)$ and of Proposition \ref{core}.

\section{Main result on strongly convex domains}

In this section we apply the results of the previous section to the case of strongly convex domains, and we prove Theorem~\ref{teoconvessoavanti}. We start with the following proposition in which we compare the dilation $\lambda_{\xi}$ with the divergence rate $c(f)$.

 \begin{prop}\label{ugualedil}
Let $\Omega\subset \C^n$ be a bounded strongly convex domain  with $C^3$ boundary. Let $f\colon \Omega\to \Omega$ be a holomorphic self-map without interior fixed points, and let $\xi$ be its Denjoy--Wolff point. Then 
$$\log \lambda_\xi=-c(f).$$
\end{prop}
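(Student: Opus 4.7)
The plan is to prove the two inequalities $c(f)\le -\log\lambda_\xi$ and $c(f)\ge -\log\lambda_\xi$ separately, each relying on one of the two intrinsic characterizations of the dilation recalled in the Background section. Since $c(f)$ is independent of the base point, I am free to choose the most convenient reference point in each half of the argument.

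For the upper bound $c(f)\le -\log\lambda_\xi$, I would use Lemma \ref{AbateRaissy}. By Theorem \ref{complexgeo}(3), fix any complex geodesic $\varphi\colon \D\to \Omega$ with $\varphi(1)=\xi$. Given $\varepsilon>0$, Lemma \ref{AbateRaissy} provides some $t^\ast\in (0,1)$ with $k_\Omega(\varphi(t^\ast),f(\varphi(t^\ast)))<-\log\lambda_\xi+\varepsilon$. Setting $x:=\varphi(t^\ast)$ and invoking the non-expansiveness of the Kobayashi distance under $f$, one obtains $k_\Omega(f^i(x),f^{i+1}(x))\le k_\Omega(x,f(x))$ for every $i\ge 0$. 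The triangle inequality then yields
\[
\frac{k_\Omega(x,f^m(x))}{m}\le k_\Omega(x,f(x))<-\log\lambda_\xi+\varepsilon,
\]
so letting $m\to\infty$ and then $\varepsilon\to 0$ in the definition \eqref{defdivrate} of $c(f)$ gives $c(f)\le -\log\lambda_\xi$.

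For the lower bound $c(f)\ge -\log\lambda_\xi$, I would use the Julia-type Proposition \ref{julialem}. Fix a pole $p\in\Omega$. Since $h_{\xi,p}(p)=1$, iterating Proposition \ref{julialem} gives
\[
h_{\xi,p}(f^n(p))\le \lambda_\xi^{\,n},\qquad n\ge 0.
\]
On the other hand, from the very definition of $h_{\xi,p}$ as the exponential of the limit \eqref{limiteesiste} and the triangle inequality applied inside that limit, one has $|\log h_{\xi,p}(z)|\le k_\Omega(p,z)$ for every $z\in \Omega$. Applied at $z=f^n(p)$, together with $\log\lambda_\xi\le 0$, this gives
\[
k_\Omega(p,f^n(p))\ge -\log h_{\xi,p}(f^n(p))\ge -n\log\lambda_\xi.
\]
Dividing by $n$ and passing to the limit yields $c(f)\ge -\log\lambda_\xi$.

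Neither step presents a genuine obstacle since both rely on results already quoted (Lemma \ref{AbateRaissy} and Proposition \ref{julialem}). The only point requiring a little care is verifying the elementary inequality $|\log h_{\xi,p}(z)|\le k_\Omega(p,z)$; this is immediate because $|k_\Omega(z,w)-k_\Omega(p,w)|\le k_\Omega(p,z)$ for every $w$, and the bound is preserved in the limit $w\to\xi$ defining $h_{\xi,p}$.
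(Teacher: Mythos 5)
Your proof is correct, and the two inequalities are established by arguments that are close in spirit to the paper's but not identical. For the lower bound $c(f)\ge -\log\lambda_\xi$, your argument and the paper's are essentially the same: both iterate Julia's lemma (Proposition \ref{julialem}) to control $h_{\xi,p}(f^n(\cdot))$ and then convert the resulting horosphere membership into a linear lower bound on $k_\Omega(p,f^n(\cdot))$; you evaluate at $z=p$ (using $h_{\xi,p}(p)=1$ and the inequality $|\log h_{\xi,p}(z)|\le k_\Omega(p,z)$), while the paper starts from a point in $E_\Omega(p,\xi,1)$, which amounts to the same thing. For the upper bound $c(f)\le -\log\lambda_\xi$, you diverge: you invoke Lemma \ref{AbateRaissy} to produce a point $x=\varphi(t^\ast)$ along a complex geodesic terminating at $\xi$ with $k_\Omega(x,f(x))$ as close to $-\log\lambda_\xi$ as desired, and then use $c(f)=\inf_m k_\Omega(x,f^m(x))/m\le k_\Omega(x,f(x))$. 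The paper instead uses the elementary real-sequence inequality $\liminf_n a_n/n \ge \liminf_n (a_{n+1}-a_n)$ applied to $a_n=-k_\Omega(p,f^n(z))$, combined with the fact that the forward orbit converges to $\xi$ by the Denjoy--Wolff theorem; this avoids invoking Lemma \ref{AbateRaissy} and works with any base point, at the cost of being a little less transparent. Both routes are valid; your version makes the structure ``find a point with small one-step displacement'' explicit, while the paper's stays purely within the $\liminf$ definition of the dilation.
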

\begin{proof}
Let $p,z\in \Omega$. We have
\begin{align*}
-c(f)&=\lim_{n\to +\infty}\frac{-k_\Omega(p,f^n(z))}{n}\\&\geq \liminf_{n\to+\infty}[-k_\Omega(p,f^{n+1}(z))+k_\Omega(p,f^n(z))]\\&\geq \liminf_{z\to \xi}[k_\Omega(p,z)-k_\Omega(p, f(z))],
\end{align*}
where we used that for all real sequences $(a_n)$
$$\liminf_{n\to+\infty}\frac{a_n}{n}\geq \liminf_{n\to +\infty}[a_{n+1}-a_n].$$
Hence $\log \lambda_\xi\leq -c(f).$

If $\lambda_\xi=1$, the result follows. If $0<\lambda_\xi<1$, we obtain the converse inequality in the following way.
We claim that if $z\in E_\Omega(p,\xi,R)$, then $k_\Omega(p,z)\geq -\log R.$ Indeed we have that $k_\Omega(p,z)\geq k_\Omega(p,w)-k_\Omega(z,w)$ for all $w\in \Omega$ and thus
$$k_\Omega(p,z)\geq \lim_{w\to \xi} [k_\Omega(p,w)-k_\Omega(z,w)]>-\log R.$$
Let $z\in E_\Omega(p,\xi, 1)$. 
It follows from Proposition \ref{julialem} that $f^n(z)\in E_\Omega(p,\xi, \lambda_\xi^n)$. Hence
$$\frac{k_\Omega(p, f^n(z))}{n}\geq -\frac{\log \lambda_\xi^n}{n}=-\log \lambda_\xi.$$

\end{proof}

\begin{remark}
 Proposition  \ref{ugualedil} shows why the concept of divergence rate is relevant in this context. Indeed, let $f\colon \Omega\to \Omega$ be a holomorphic self-map without interior fixed points, and let $(\B^k,\ell,\tau)$ be the canonical Kobayashi  hyperbolic semi-model given by Theorem \ref{astratto}. 
Assume that $\tau$ has no interior fixed points.  Since $c(f)=c(\tau)$ it follows  that the dilation of $f$ and $\tau$ at their respective Denjoy-Wolff points is the same. 
\end{remark}

%

We are now ready to state the main result of this section.
 \begin{teo}\label{fin1}
 Let $\Omega\subset  \C^q$ be a bounded strongly convex domain with $C^3$ boundary.
Let $f\colon \Omega\to \Omega$ be a hyperbolic holomorphic self-map, with Denjoy--Wolff point $\xi$.
Then there exist
\begin{enumerate}
\item an integer $k$ such that $1\leq k\leq q$, 
\item a hyperbolic automorphism $\tau\colon \HH^k\to \HH^k$ of the form
\begin{equation}\label{lomo1}
\tau(z_1,z')= \left(\frac{1}{\lambda_\xi} z_1,\frac{e^{it_1}}{\sqrt \lambda_\xi}z'_1,\dots, \frac{e^{it_{k-1}}}{\sqrt \lambda_\xi}z'_{k-1} \right),
\end{equation}
where $t_j\in \R$ for $1\leq j\leq k-1$, 
\item a holomorphic mapping
$\ell\colon \Omega\to \HH^k$,
\end{enumerate}
 such that the triple
 $(\HH^k,\ell,\tau)$ is a  canonical Kobayashi hyperbolic semi-model for $f$.
\end{teo}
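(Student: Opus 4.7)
The plan is to apply Theorem \ref{astratto} to produce a canonical Kobayashi hyperbolic semi-model whose base is a ball, and then identify the induced automorphism as a hyperbolic automorphism of $\B^k$ with $k \geq 1$, transporting it to $\HH^k$ via a Cayley transform to obtain the normal form (\ref{lomo1}).

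First I would verify the hypotheses of Theorem \ref{astratto}. Since $f$ has no interior fixed points, the Denjoy--Wolff theorem on $\Omega$ yields that every orbit $(f^n(z))$ converges to $\xi\in\partial\Omega$. The assumption that $\Omega$ is strongly convex with $C^3$ boundary implies in particular that $\Omega$ is strongly pseudoconvex with $C^2$ boundary, so Theorem \ref{deng} guarantees $S_\Omega(f^n(z))\to 1$. Theorem \ref{astratto} then produces a canonical Kobayashi hyperbolic semi-model $(\B^k, \tilde\ell, \tilde\tau)$ with $0\le k\le q$ and
$$c(\tilde\tau)=c(f).$$

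Next I would rule out all dynamical types of $\tilde\tau$ except hyperbolic. Applying Proposition \ref{ugualedil} to $f$ yields $c(f)=-\log\lambda_\xi>0$, using the hyperbolicity assumption $\lambda_\xi<1$. Since the divergence rate of an elliptic automorphism of $\B^k$ is zero (orbits are bounded) and since $c(\tilde\tau)=0$ when $k=0$, we deduce $k\ge 1$ and that $\tilde\tau$ has no interior fixed point. By the Denjoy--Wolff theorem for $\B^k$ there is a Denjoy--Wolff point $\eta\in\partial\B^k$ with dilation $\lambda_\eta\in(0,1]$. Applying Proposition \ref{ugualedil} now to $\tilde\tau$, viewed as a self-map of the strongly convex domain $\B^k$, gives $c(\tilde\tau)=-\log\lambda_\eta$, so
$$\lambda_\eta=\lambda_\xi<1,$$
and $\tilde\tau$ is hyperbolic.

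Finally I would put $\tilde\tau$ in normal form. By the classical description of hyperbolic automorphisms of $\B^k$ recalled in the introduction, there exists a Cayley transform $\mathcal{C}\colon \B^k\to \HH^k$ with $\mathcal{C}(\eta)=\infty$ such that
$$\tau:=\mathcal{C}\circ\tilde\tau\circ\mathcal{C}^{-1}$$
takes the form (\ref{lomo1}) with dilation $\lambda_\xi$ at $\infty$. Setting $\ell:=\mathcal{C}\circ\tilde\ell$, the triple $(\HH^k,\ell,\tau)$ satisfies $\ell\circ f=\tau\circ\ell$, and it is a canonical Kobayashi hyperbolic semi-model because postcomposition with the biholomorphism $\mathcal{C}$ transports the universal property of $(\B^k,\tilde\ell,\tilde\tau)$ verbatim.

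The only genuinely delicate point is the exclusion of the parabolic case for $\tilde\tau$, which I expect to be the main obstacle; it is handled by the quantitative matching of divergence rates via Proposition \ref{ugualedil}, applied both to $f$ on $\Omega$ and to $\tilde\tau$ on $\B^k$. Everything else is bookkeeping, given Theorem \ref{astratto} and the classical normal form for hyperbolic ball automorphisms.
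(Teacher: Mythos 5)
Your proof is correct and follows essentially the same route as the paper: invoke Theorem \ref{astratto} to get a canonical Kobayashi hyperbolic semi-model with ball base, use Proposition \ref{ugualedil} (both on $f$ over $\Omega$ and on the induced automorphism over the ball) to match divergence rates and dilations, conclude hyperbolicity and $k\ge 1$, and pass to $\HH^k$ by a Cayley transform to reach the normal form. The only difference is that you spell out the exclusion of the elliptic and $k=0$ cases in slightly more detail than the paper, which simply asserts these follow immediately from $c(\tau)=c(f)>0$.
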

 \begin{proof}
Thanks to Theorem \ref{astratto} we obtain the existence of a canonical Kobayashi hyperbolic semi-model $(\HH^k,\ell, \tau)$ for $f$ with $c(\tau)=c(f)>0$. It immediately follows that $k>0$. Moreover,  by Proposition \ref{ugualedil} it follows that $\tau$ is a hyperbolic automorphism of $\HH^k$ with dilation  $\lambda_\xi$ at its  Denjoy--Wolff point. Now we can change variables in $\HH^k$ to put $\tau$ the form (\ref{lomo1}), concluding the proof.
\end{proof}
\begin{remark}
It is natural to ask  whether  
\begin{equation}\label{manca}
K\hbox{-}\lim_{z\to \xi}h(z)=\infty,
\end{equation}
which is the case when $\Omega=\B^q$.
If there exists an orbit $(z_n)$ which enters eventually a Koranyi  region with vertex at the Denjoy-Wolff point $\xi$, then 
 (\ref{manca}) follows as in \cite[Theorem 5.6]{ArBr}. Notice that the proof of \cite[Theorem 5.6]{ArBr} is given for the ball $\B^q$, but a similar proof works  for bounded strongly convex domains with smooth boundary.
 If $\Omega=\B^q$ then all orbits eventually enter a Koranyi region.
  It is an open question whether such an orbit $(z_n)$  exists when $\Omega$ is a bounded strongly convex domain with $C^3$ boundary.
\end{remark}

We end this section giving a similar result for parabolic nonzero-step maps.
\begin{definition}
 Let $\Omega\subset\subset  \C^q$ be a  strongly convex domain with $C^3$ boundary.
If $f\colon \Omega\to \Omega$ is a parabolic holomorphic self-map, we say that it is {\sl nonzero-step} if for all $z\in \Omega$ we have that
$s_1(z)>0$.

\end{definition}

\begin{teo}\label{parabolicresult}
 Let $\Omega\subset \C^q$ be a bounded strongly convex domain with $C^3$ boundary.
Let $f\colon \Omega\to \Omega$ be a parabolic nonzero-step holomorphic self-map with  Denjoy--Wolff point $\xi$.
Then there exist
\begin{enumerate}
\item an integer $k$ such that $1\leq k\leq q$, 
\item a parabolic automorphism $\tau\colon \HH^k\to \HH^k$ of the form
\begin{equation}\label{lomo2}
\tau(z_1,z')=(z_1\pm1,e^{it_1}z'_1,\dots e^{it_{k-1}}z'_{k-1}),
\end{equation}
where $t_j\in \R$ for $1\leq j\leq k-1$, or of the form
\begin{equation}\label{lomo3}
\tau(z_1,z')=(z_1-2z'_1+i, z'_1-i,e^{it_2}z'_2,\dots e^{it_{k-1}}z'_{k-1}),
\end{equation}
 where  $t_j\in \R$ for $2\leq j\leq k-1,$
\item a holomorphic mapping
$\ell\colon \B^q\to \HH^k$,
\end{enumerate}
 such that the triple
 $(\HH^k,\ell,\tau)$ is a  canonical Kobayashi hyperbolic semi-model for $f$.
\end{teo}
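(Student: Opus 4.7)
The plan is to mirror the proof of Theorem~\ref{fin1}: apply Theorem~\ref{astratto} to obtain a canonical Kobayashi hyperbolic semi-model $(\B^k,\ell,\tau)$ for $f$, and then use the nonzero-step hypothesis (in place of the positive divergence rate exploited in the hyperbolic case) to show that $k \geq 1$ and that $\tau$ is a parabolic automorphism of $\B^k$. First I would verify that the hypotheses of Theorem~\ref{astratto} hold: since $\Omega$ is strongly convex, hence strongly pseudoconvex, with $C^3$ boundary, Theorem~\ref{deng} gives $S_\Omega(z)\to 1$ as $z\to\partial\Omega$; and since $f$ is parabolic, Abate's Denjoy--Wolff theorem forces every orbit to converge to $\xi\in\partial\Omega$, so $S_\Omega(f^n(z))\to 1$. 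Theorem~\ref{astratto} then yields $(\B^k,\ell,\tau)$ with $0 \leq k \leq q$, $\tau$ an automorphism of $\B^k$, and $c(\tau)=c(f)$. By Proposition~\ref{ugualedil}, $c(f) = -\log \lambda_\xi = 0$, since $f$ parabolic means $\lambda_\xi = 1$.

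I would next use the nonzero-step hypothesis to rule out that $\tau$ has any interior fixed point. Applying Theorem~\ref{astratto}(1) to the pair $(x,f(x))$ and using the intertwining $\ell\circ f = \tau\circ \ell$ gives
\[
s_1(x) \;=\; \lim_{m\to\infty} k_\Omega(f^m(x), f^{m+1}(x)) \;=\; k_{\B^k}(\ell(x), \tau\ell(x)), \qquad \forall x \in \Omega.
\]
Suppose for contradiction that $\tau(p)=p$ for some $p\in\B^k$. The semi-model surjectivity property~\eqref{due} produces $n\geq 0$ and $x\in\Omega$ with $\tau^{-n}(\ell(x))=p$; applying $\tau^n$ and using $\tau^n(p)=p$ yields $\ell(x)=p$, so $s_1(x)=k_{\B^k}(p,p)=0$, contradicting the nonzero-step hypothesis. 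In particular $k\geq 1$, since for $k=0$ the space $\B^0$ is a single point trivially fixed by $\tau$.

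To conclude, $\tau$ is an automorphism of $\B^k$ with $c(\tau)=0$ and no interior fixed point, so the classification of automorphisms of the ball identifies $\tau$ as a parabolic automorphism. The classical normal-form theory for parabolic automorphisms of $\B^k$ (see e.g.~\cite{Ab}) then puts $\tau$, after passing to the Siegel half-space $\HH^k$ with Denjoy--Wolff point at $\infty$, into the affine form $(z_1, z')\mapsto (z_1 + 2i\langle Uz', v\rangle + \mu + i|v|^2,\, Uz' + v)$, for some unitary $U$ on $\C^{k-1}$, some $v\in\C^{k-1}$, and some $\mu\in\R$. When $v=0$, rescaling $z_1$ normalizes $\mu$ to $\pm 1$ and a unitary change of coordinates on $z'$ diagonalizes $U$, producing the normal form~\eqref{lomo2}; when $v\neq 0$, a suitable unitary rotation in $\C^{k-1}$ brings $v$ to $(-i,0,\dots,0)$ and puts the residual action into the form~\eqref{lomo3}. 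The only non-routine step is the exclusion of interior fixed points for $\tau$, resting on the clean interaction between the surjectivity property~\eqref{due} and the nonzero-step assumption; the remainder is a routine application of Theorem~\ref{astratto} together with the classical normal-form theory for parabolic ball automorphisms.
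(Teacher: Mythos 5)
Your proposal is correct and follows essentially the same route as the paper's proof: apply Theorem~\ref{astratto}, use the nonzero-step hypothesis together with the surjectivity property~\eqref{due} (and the fact that $\tau$ is an isometry) to rule out interior fixed points of $\tau$, conclude $k\geq 1$ and $\tau$ non-elliptic, then invoke Proposition~\ref{ugualedil} with $c(\tau)=c(f)=0$ to see $\tau$ is parabolic, and finish with the classical normal form. The only difference is that you spell out the "no interior fixed point" step in full where the paper compresses it into the single line $k_{\HH^k}(z,\tau(z))=s_1(z)>0$ for all $z\in\HH^k$.
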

\begin{proof}
Let $(\HH^k,\ell,\tau)$ be the canonical Kobayashi hyperbolic model for $f$ given by Theorem \ref{astratto}.
Then for all $z\in \HH^k$, it follows from (\ref{ulula}) that  $k_{\HH^k}(z,\tau(z))=s_1(z)>0.$
Hence $k\geq 1$, and $\tau$ is not elliptic. Moreover, since  $c(\tau)=c(f)$, it follows from Proposition \ref{ugualedil} that  $\tau$ is parabolic. Finally we can change holomorphic coordinates and put $\tau$ in the form (\ref{lomo2}) or  (\ref{lomo3}).
\end{proof}

\part{Backward iteration}
%
%
%
\section{Canonical pre-models}\label{canonicalbackward}

In this section we construct a canonical pre-model for a holomorphic self-map $f$ of a bounded taut domain $\Omega$ 
assuming the existence of
a backward orbit with bounded step along which the squeezing function converges to 1.

\begin{definition}
Let $X$ be a complex manifold and let $f\colon X\to X$ be a holomorphic self-map.
Let $\beta=(x_m)_{m\geq 0}$ be a backward orbit for $f$, meaning that $f(x_{m+1})=x_m$ for all $m\geq 0$. The {\sl backward $m$-step} $s_m(\beta)$ of $f$ at $\beta$ is the limit
$$s_m(\beta):=\lim_{n\to\infty}k_X(x_n, x_{n+m}).$$ Such a limit exists since the sequence $(k_X(x_n, x_{n+m}))_{n\geq 0}$ is non-decreasing. We will say that the backward orbit has bounded step if $s_1(\beta)<\infty$.
If $(y_m)$ is a backward orbit for $f$, we denote by $[y_m]$ the family of all backward orbits $(z_m)$ of $f$ such that the sequence
$(k_X(z_m,y_m))$ is bounded.
\end{definition}
\begin{definition}\label{pre-model}
Let $X$ be a complex manifold and let $f\colon X\to X$ be a holomorphic self-map.
A {\sl pre-model} for $f$ is a triple  $(\Lambda,h,\v)$  where
$\Lambda$ is a complex manifold called the {\sl base space}, $h\colon \Lambda\to X$ is a holomorphic mapping, and $\v\colon \Lambda\to\Lambda$ is an automorphism such that
\begin{equation}
f\circ h=h\circ\v.
\end{equation}
Let  $(\Lambda,h,\v)$ and $(Z,\ell,\tau)$  be two pre-models for $f$. A  {\sl morphism of pre-models} $\hat\eta\colon  (\Lambda,h,\v) \to (Z,\ell,\tau) $ is given by a holomorphic mapping  $\eta \colon \Lambda\to Z$ such that the following diagram commutes:
\SelectTips{xy}{12}
\[ \xymatrix{\Lambda \ar[rrr]^h\ar[rd]^\eta\ar[dd]^\v &&& X \ar[dd]^f\\
& Z \ar[rru]^\ell \ar[dd]^(.25)\tau\\
\Lambda\ar'[r][rrr]^(.25)h \ar[rd]^\eta &&& X\\
& Z \ar[rru]^\ell}
\]
If the mapping $\eta \colon \Lambda\to Z$ is a biholomorphism, then we say that $\hat\eta\colon (\Lambda,h,\v) \to (Z,\ell,\tau) $ is an {\sl isomorphism of pre-models}. Notice then that $\eta^{-1}\colon Z\to \Lambda$ induces a morphism $ {\hat\eta}^{-1}\colon  (Z,\ell,\tau)\to   (\Lambda,h,\v). $
\end{definition}
\begin{definition}\label{definizionecanprem}
Let $X$ be a complex manifold, let $f\colon X\to X$ be a holomorphic self-map and let  $(\Lambda,h,\v)$ be a pre-model for $f$.
If  $[y_m]$ is a class of backward orbits, we say that $(\Lambda,h,\v)$ is {\sl associated} with $[y_m]$ if for some (and hence for any) $x\in X$ we have that  $(h\circ \v^{-m}(x))\in [y_m]$.

We say that $(Z, \ell,\tau)$ is a  {\sl canonical pre-model}  for $f$  associated with $[y_m]$ if
\begin{enumerate}
\item  $(Z, \ell,\tau)$ is a pre-model for $f$ associated with $[y_m]$, and
\item for any other pre-model
$(\Lambda,h,\v)$ for $f$ associated with $[y_m]$  there exists a unique morphism of pre-models $\hat\eta\colon (\Lambda,h,\v) \to (Z,\ell,\tau) $.
\end{enumerate}
\end{definition}

\begin{remark}\label{boundedstepremark}
If $(Z, \ell,\tau)$ and $(\Lambda,h,\v)$ are two canonical pre-models for $f$ associated with the same class $[y_m]$, then they are isomorphic. 
Moreover it is is easy to see (see e.g.  \cite[Lemma 7.4]{Ar}) that if a pre-model is associated with a class $[y_m]$ then every backward orbit in $[y_m]$ has bounded step.
\end{remark}

Let $\Omega\subset\subset  \C^{q}$ be a  taut domain and $f\colon \Omega\to \Omega$ be a holomorphic self-map. Let $(\Theta,V_n)$ be the inverse limit of the sequence of iterates  $(f^{m-n}\colon \Omega\to\Omega)$. Recall that  $\Theta$ is defined as
$$
\Theta:=\{(z_m)_{m\ge 0}\in\Omega^\N\colon (z_m)\textrm{ is a backward orbit for $f$}\},
$$
and the map $V_n\colon\Theta\to \Omega$ is defined as $V_n((z_m))=z_n$. 

The following theorem is the analogous of Theorem \ref{astratto} for the backward dynamics. Notice that here we assume that the domain $\Omega$ is taut. This condition will be used in the proof of Proposition \ref{backcore} in order to construct the sequence of maps $\alpha_n:Z\to\Omega$.
\begin{teo}\label{backastratto}
Let $\Omega\subset  \C^{q}$ be a bounded taut domain and $f\colon \Omega\to \Omega$ be a holomorphic self-map. Assume that there exists a backward orbit $(z_m)$ with bounded step and  $S_\Omega(z_m)\to 1$.

Then there exists a canonical pre-model $(\B^k,  \ell,\tau)$ for $f$ associated with $[z_m]$, where $0\leq k\leq q$. Moreover, the following holds: 
\begin{enumerate}
\item the image of the map $\ell$ is
$$\ell(\B^k)=V_0([z_m]),$$
and for all $(w_m)\in [z_m]$, there exists a unique $z\in \B^k$ such that $(\ell\circ\tau^{-m}(z))=(w_m),$
\item we have $$\lim_{m\to \infty}(\ell\circ\tau^{-m})^*k_\Omega=k_{\B^k},$$
\item if $\beta$ is a backward orbit in the class $[z_m]$, then the divergence rate of $\tau$ satisfies 
$$ c(\tau)=\lim_{m\to \infty}\frac{s_m(\beta)}{m}=\inf_{m\in \N}\frac{s_m(\beta)}{m}.$$
\end{enumerate}
\end{teo}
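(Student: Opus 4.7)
The strategy mirrors the proof of Theorem \ref{astratto}: first establish a backward analog of Proposition \ref{core}, producing a holomorphic retract $Z \subset \B^q$ together with a compatible family of holomorphic maps $\alpha_n \colon Z \to \Omega$ that realizes the backward orbit class $[z_m]$ and enjoys a universal property; then extract $(\ell,\tau)$ from this family.

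For the construction, by Proposition \ref{maxattained} pick for each $m$ an injective $\psi_m \colon \Omega \to \B^q$ with $\psi_m(z_m)=0$ and $\psi_m(\Omega) \supset B(0,S_\Omega(z_m))$. Define
$$G_n^{(m)} := f^{m-n} \circ \psi_m^{-1} \colon B(0,S_\Omega(z_m)) \to \Omega$$
for $m\geq n$; these satisfy $G_n^{(m)}(0)=z_n$ and $f \circ G_{n+1}^{(m)} = G_n^{(m)}$. Since $\Omega$ is taut and $S_\Omega(z_m) \to 1$, a diagonal extraction yields a subsequence $(\nu(h))$ so that for every $n$ the maps $G_n^{(\nu(h))}$ converge locally uniformly to a holomorphic $\alpha_n \colon \B^q \to \Omega$ with $\alpha_n(0)=z_n$ and $f \circ \alpha_{n+1} = \alpha_n$. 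The retract structure is obtained by passing to the limit of the self-maps $\rho^{(h)} := \psi_0 \circ G_0^{(\nu(h))} = \psi_0 \circ f^{\nu(h)} \circ \psi_{\nu(h)}^{-1}$ of $\B^q$, which all fix the origin, obtaining $\rho := \psi_0 \circ \alpha_0$. Exploiting that $S_\Omega(z_m) \to 1$ turns $\psi_m$ and $\psi_m^{-1}$ into asymptotic Kobayashi isometries on fixed compacta, a double-extraction argument parallel to Lemma \ref{grosso} yields $\rho \circ \rho = \rho$, so that $Z := \rho(\B^q)$ is a holomorphic retract of $\B^q$, hence biholomorphic to $\B^k$ for some $0 \leq k \leq q$, and each $\alpha_n$ factors as $\alpha_n = \tilde\alpha_n \circ \rho$ with $\tilde\alpha_n \colon Z \to \Omega$. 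The asymptotic isometry
$$\lim_{n \to \infty} (\tilde\alpha_n)^* k_\Omega = k_Z$$
follows from the two-sided comparison
$$k_{\B^q}(0,\psi_m(\alpha_m(w))) \leq k_\Omega(z_m,\alpha_m(w)) \leq k_{B(0,S_\Omega(z_m))}(0,w) \longrightarrow k_{\B^q}(0,w),$$
its companion bound obtained by applying $\psi_0$, and the fact that $k_{\B^q}$ restricts to $k_Z$ on $Z$.

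The canonical pre-model is then extracted as in the forward case. Setting $\ell := \tilde\alpha_0$ (under the identification $Z \simeq \B^k$), the universal property of the family $(\tilde\alpha_n)$ applied to the shifted family $(\tilde\alpha_{n+1})$ produces a holomorphic self-map $\tau^{-1} \colon Z \to Z$ with $\tilde\alpha_{n+1} = \tilde\alpha_n \circ \tau^{-1}$; applying it to the extended family $(f \circ \tilde\alpha_0,\tilde\alpha_0,\tilde\alpha_1,\ldots)$ yields the inverse $\tau$, and the standard uniqueness argument gives $\tau \circ \tau^{-1} = \tau^{-1} \circ \tau = \mathrm{id}$, so $\tau$ is an automorphism with $f \circ \ell = \ell \circ \tau$. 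Property (1) follows from the realization and asymptotic isometry: surjectivity $\ell(\B^k) = V_0([z_m])$ is obtained by extracting, for any $(w_m) \in [z_m]$, a limit point in $Z$ of appropriate projections $\psi_{\nu(h)} \circ w_{\nu(h)}$ via the bounded-step hypothesis, while uniqueness comes from $\lim_m k_\Omega(\ell \circ \tau^{-m}(z),\ell \circ \tau^{-m}(z')) = k_{\B^k}(z,z')$. Property (2) is the asymptotic isometry above, and property (3) follows from the identity $s_m(\beta) = k_{\B^k}(\tau^{-m}(z),z)$ in the limit together with the divergence-rate formula of \cite{ArBr}.

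The main obstacle is the construction of the retract $\rho$ and the proof that $\rho \circ \rho = \rho$. In the forward case this idempotence was immediate from the exact identity $\psi_{\nu(h)}^{-1} \circ \psi_{\nu(h)} = \mathrm{id}$ on relevant compacta; in the backward case no such symmetry is available, and one must instead exploit that $\psi_m$ and $\psi_m^{-1}$ are only asymptotic isometries to simulate this effect through a finer two-parameter diagonal extraction. The bounded-step hypothesis enters essentially here in ensuring that the iterated compositions $f^{\nu(h)-m}$ keep the relevant sets of $\B^q$ inside fixed compacta where the asymptotic estimates apply, and also in ensuring that the morphism from an arbitrary pre-model into $(Z,\ell,\tau)$ is well-defined.
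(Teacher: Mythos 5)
Your overall plan — build a backward analogue of Proposition \ref{core} by rescaling with $\psi_m$, extract a retract $Z\subset\B^q$, and then manufacture $(\ell,\tau)$ from the resulting compatible family — is indeed the route the paper takes, and your construction of the maps $\alpha_n\colon\B^q\to\Omega$ as limits of $f^{\nu(h)-n}\circ\psi_{\nu(h)}^{-1}$ (using tautness of $\Omega$) matches the paper. However, your construction of the retraction is wrong, and this is a genuine gap, not just a point you left to fill in later.

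You set $\rho^{(h)}:=\psi_0\circ f^{\nu(h)}\circ\psi_{\nu(h)}^{-1}$ and take $\rho:=\psi_0\circ\alpha_0$. The idempotence $\rho\circ\rho=\rho$ would require $\alpha_0\circ\psi_0\circ\alpha_0=\alpha_0$, which there is no reason to expect: the composition $\psi_{\nu(h)}^{-1}\circ\psi_0$ that appears when you compose two such maps is a holomorphic self-map of $\Omega$ that sends $z_0$ to $z_{\nu(h)}$, so it is very far from the identity — it in fact carries the basepoint all the way out toward $\partial\Omega$. You correctly sense the difficulty and call it "the main obstacle," but the fix you gesture at (a "finer two-parameter diagonal extraction" exploiting that $\psi_m$ and $\psi_m^{-1}$ are only asymptotic isometries) is not an argument, and in any case it is aimed at rescuing the wrong definition of $\rho$. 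You also assert that the exact cancellation $\psi_{\nu(h)}^{-1}\circ\psi_{\nu(h)}=\mathrm{id}$ used in the forward case "is not available" in the backward setting; that claim is false. The paper instead defines
\[
\beta_{\nu(h)}:=\psi_{\nu(h)}\circ\alpha_{\nu(h)}\colon\B^q\to\B^q,\qquad \alpha:=\lim_h\beta_{\nu(h)},
\]
(note: $\psi_{\nu(h)}$, not $\psi_0$, is used to come back to the ball). With this choice one inserts $\psi_{\nu(h)}^{-1}\circ\psi_{\nu(h)}=\mathrm{id}_\Omega$ into the identity $\alpha_j=f^{\nu(h)-j}\circ\alpha_{\nu(h)}$, which gives $\alpha_j\circ\alpha=\alpha_j$ directly (exactly the same cancellation as in the forward case), and then $\alpha\circ\alpha=\alpha$ follows by applying $\beta_{\nu(h)}$ to $\alpha(z)$ and passing to the limit. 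So the symmetry you thought was unavailable is available; the issue is simply that the "anchor" for the rescaling back to $\B^q$ should be the moving basepoint $z_{\nu(h)}$, not the fixed one $z_0$. Once you replace your $\rho$ with the paper's $\alpha$, the rest of your outline (the universal property, extraction of $\tau$ via the shifted families, the isometry $\lim\alpha_m^*k_\Omega=k_Z$ via the two-sided Kobayashi comparison, the identification $\Psi(Z)=[z_m]$ using bounded step) lines up with the paper's proof.
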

\begin{remark}
The assumptions of the theorem are satisfied when $\Omega$ is bounded strongly pseudoconvex with $C^2$ boundary and $(z_m)$ converges to $\partial\Omega$, since in this case the domain $\Omega$ is taut (see \cite[Corollary 2.1.14]{Ab}) and $S_\Omega(z_m)\to 1$ by Theorem \ref{deng}. 
\end{remark}
The proof of Theorem \ref{backastratto} is based on  the following result.
\begin{prop}\label{backcore}
Let $\Omega\subset  \C^{q}$ be a bounded taut domain and $f\colon \Omega\to \Omega$ be a holomorphic self-map. Assume that there exists a backward orbit $(z_m)$ with bounded step such that  $S_\Omega(z_m)\to 1$.

Then there exists a family of holomorphic maps $(\alpha_n\colon Z\to\Omega)$, where $Z$ is an holomorphic retract of $\B^q$, such that the following hold:

\begin{itemize}
\item[(a)] for all $m\ge n\ge 0$,
$$
\alpha_n=f^{m-n}\circ\alpha_m,
$$
\item[(b)] let $\Psi:Z\rightarrow \Theta$ be the map defined as $\Psi(z)=(\alpha_m(z))$. Then $\Psi$ is injective, $\Psi(0)=(z_m)$ and
 \begin{equation}\label{backtherewolf}
\Psi(Z)=[z_m],
\end{equation}
\item[(c)]\begin{equation}\label{backulula}
\lim_{m\to\infty} \alpha_m^* \, k_\Omega=\, k_Z,
\end{equation}
\item[(d)] {\bf Universal property}: let $Q$ be a complex manifold and $(\gamma_n\colon Q\to\Omega)$ a family of holomorphic mappings satisfying $\gamma_n=f^{m-n}\circ\gamma_m$ for all  $m\geq n\geq 0$, and such that $(\gamma_m(x))\in[z_m]$ for some (and hence for any) $x\in Q$. Then there exists a unique holomorphic map $\Gamma\colon Q\to Z$ such that $\alpha_m\circ \Gamma=\gamma_m$ for all $m\geq 0$.
\end{itemize}
\end{prop}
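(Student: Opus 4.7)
The plan is to dualize the proof of Proposition~\ref{core} by reversing the direction of composition: instead of composing $\psi_m$ after $f^m$ on the source, compose $f^{m-n}$ after $\psi_m^{-1}$ on the target. By Proposition~\ref{maxattained}, there exist injective holomorphic maps $\psi_m\colon\Omega\to\B^q$ with $\psi_m(z_m)=0$ and $\psi_m(\Omega)\supset B(0,S_\Omega(z_m))$. Since $S_\Omega(z_m)\to 1$, for every compact $K\subset\B^q$ the inverse $\psi_m^{-1}$ is eventually defined on $K$. Using that $\Omega$ is taut, a diagonal extraction produces, for each $n\ge 0$, a subsequence $(m_n(h))$, with $(m_n(h))_h$ a subsequence of $(m_{n-1}(h))_h$, such that $f^{m_n(h)-n}\circ\psi_{m_n(h)}^{-1}$ converges on compact subsets of $\B^q$ to a holomorphic map $\widetilde\alpha_n\colon \B^q\to\Omega$ with $\widetilde\alpha_n(0)=z_n$. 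Since $(m_m(h))$ is a subsequence of $(m_n(h))$ for $m\ge n$, this yields $\widetilde\alpha_n=f^{m-n}\circ\widetilde\alpha_m$.

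Let $\nu(h):=m_h(h)$ be the diagonal subsequence. The self-maps $\beta_{\nu(h)}:=\psi_{\nu(h)}\circ\widetilde\alpha_{\nu(h)}\colon\B^q\to\B^q$ fix $0$, so by tautness of $\B^q$, after a further subsequence, $\beta_{\nu(h)}\to\alpha\colon\B^q\to\B^q$ uniformly on compact subsets. Since $f^{\nu(h)-j}\circ\psi_{\nu(h)}^{-1}\circ\beta_{\nu(h)}=\widetilde\alpha_j$ identically for $\nu(h)\ge j$, letting $h\to\infty$ gives $\widetilde\alpha_j\circ\alpha=\widetilde\alpha_j$, and then composing with $\psi_{\nu(h)}$ yields $\beta_{\nu(h)}\circ\alpha=\beta_{\nu(h)}$, hence $\alpha\circ\alpha=\alpha$. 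Thus $Z:=\alpha(\B^q)$ is a holomorphic retract of $\B^q$ biholomorphic to $\B^k$ for some $0\le k\le q$, and I define $\alpha_n:=\widetilde\alpha_n|_Z\colon Z\to\Omega$. Property (a) and $\Psi(0)=(z_m)$ are then immediate.

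For (c), the upper bound $k_\Omega(\alpha_m(z),\alpha_m(z'))\le k_Z(z,z')$ comes from passing to the limit in the contractivity of the maps $f^{m_m(h)-m}\circ\psi_{m_m(h)}^{-1}$, using that $k_{B(0,S_\Omega(z_{m_m(h)}))}\to k_{\B^q}$ as $S_\Omega(z_{m_m(h)})\to 1$. For the matching lower bound, non-expansivity of $\psi_m\circ\widetilde\alpha_m\to\alpha$ yields $k_{\B^q}(\alpha(z),\alpha(z'))\le\lim_m k_\Omega(\alpha_m(z),\alpha_m(z'))$, and for $z,z'\in Z$ the left-hand side equals $k_Z(z,z')$. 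Injectivity of $\Psi$ in (b) then follows from Kobayashi hyperbolicity of $Z$. To prove $\Psi(Z)=[z_m]$, given $(w_m)\in[z_m]$ one uses $k_{\B^q}(\psi_m(w_m),0)\le k_\Omega(w_m,z_m)\le C$ to extract a subsequential limit $z_\infty\in\B^q$; the identity $f^{\nu(h)-n}(w_{\nu(h)})=w_n$ combined with $\widetilde\alpha_n=\lim_h f^{\nu(h)-n}\circ\psi_{\nu(h)}^{-1}$ on compact subsets gives $\widetilde\alpha_n(z_\infty)=w_n$ for all $n$, so $z:=\alpha(z_\infty)\in Z$ satisfies $\alpha_n(z)=w_n$ via $\widetilde\alpha_n\circ\alpha=\widetilde\alpha_n$.

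The universal property (d) is where I expect the main obstacle. The argument from (b) yields pointwise a unique $\Gamma(x)\in Z$ with $\alpha_n(\Gamma(x))=\gamma_n(x)$ for every $n$; the delicate part is proving holomorphicity of $\Gamma$. The crucial observation is that $\psi_m\circ\alpha_m|_Z\to\mathrm{id}_Z$ uniformly on compact subsets of $Z$, so for any $x_0\in Q$ there exist $m'\ge 0$ and a relatively compact neighborhood $V\subset Z$ of $\Gamma(x_0)$ such that $\psi_{m'}\circ\alpha_{m'}|_V\colon V\to\B^q$ is a biholomorphism onto its image, with holomorphic inverse $\sigma$. Using $k_Z(\Gamma(x),0)=\lim_n k_\Omega(\gamma_n(x),z_n)\le k_Q(x,x_0)+C$, continuity of $\Gamma$ follows: any subsequential limit of $\Gamma(x_k)$ for $x_k\to x_0$ lies in a compact subset of $Z$ and must equal $\Gamma(x_0)$ by injectivity of $\Psi$. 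Hence $\Gamma(x)\in V$ for $x$ near $x_0$, and since both $\Gamma(x)$ and $\sigma(\psi_{m'}(\gamma_{m'}(x)))$ lie in $V$ and share the common image $\gamma_{m'}(x)$ under the injective map $\alpha_{m'}|_V$, they agree, displaying $\Gamma$ locally as a composition of holomorphic maps; uniqueness of $\Gamma$ is then part of the injectivity already established in (b).
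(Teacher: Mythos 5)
Your construction follows the paper's blueprint very closely: same squeezing maps $\psi_m$, same diagonal extraction yielding $\widetilde\alpha_n$, $\nu(h)$, $\beta_{\nu(h)}$, the same retraction $\alpha$ and retract $Z=\alpha(\B^q)$, and the same treatment of (a) and (b). Two points are worth singling out because they genuinely diverge from the paper's own proof.

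For the lower bound in (c), you argue that $\psi_{\nu(h)}\colon\Omega\to\B^q$ is non-expansive, so $k_{\B^q}\bigl(\beta_{\nu(h)}(z),\beta_{\nu(h)}(z')\bigr)\le k_\Omega\bigl(\alpha_{\nu(h)}(z),\alpha_{\nu(h)}(z')\bigr)$, and let $h\to\infty$ to get $k_Z(z,z')\le\lim_m k_\Omega(\alpha_m(z),\alpha_m(z'))$. This is correct, and in fact the cleaner route: the paper's Lemma \ref{backx4} labels its second computation ``the inverse inequality'' but the estimate it actually derives, $k_\Omega(\alpha_{\nu(h)}(x),\alpha_{\nu(h)}(y))\le k_B(z_h,w_h)\le k_{\B^q}(x,y)+\varepsilon$, again runs in the direction $\lim_m\le k_Z+\varepsilon$. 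So your proof not only works but supplies the missing half of the equality; you should state explicitly that the monotone sequence $k_\Omega(\alpha_m(z),\alpha_m(z'))$ has a limit equal to its limit along $\nu(h)$, which is what lets you transfer the subsequential estimate to the full limit.

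For holomorphicity of $\Gamma$ in (d), you transport the argument used in the forward Proposition \ref{core}: $\beta_{\nu(h)}|_Z\to{\sf id}_Z$, take $m'=\nu(h)$ large so that $\psi_{m'}\circ\alpha_{m'}|_V$ is a biholomorphism onto its image, prove $\Gamma$ is continuous at $x_0$ via the bound $k_Z(\Gamma(x),0)\le k_Q(x,x_0)+C$ and injectivity of $\Psi$, and then write $\Gamma=\sigma\circ\psi_{m'}\circ\gamma_{m'}$ locally. This works, but note that when $k<q$ the map $\sigma$ is only defined on the $k$-dimensional complex submanifold $\psi_{m'}(\alpha_{m'}(V))\subset\B^q$, and one must observe that $\psi_{m'}\circ\gamma_{m'}$ is holomorphic as a map into that submanifold (its image lands there, and a holomorphic map into the ambient space whose image lies in a locally closed complex submanifold is holomorphic into the submanifold). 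The paper sidesteps all of this with a normal-families argument: the bound $k_{\B^q}(\psi_m\circ\gamma_m(x),0)\le k_\Omega(\gamma_m(x),z_m)<\infty$ shows $(\psi_m\circ\gamma_m)$ is not compactly divergent, a subsequence of $\psi_{\nu(h)}\circ\gamma_{\nu(h)}$ converges to a holomorphic $\beta\colon Q\to\B^q$, and $\Gamma=\alpha\circ\Gamma=\lim_h\psi_{\nu(h)}\circ\alpha_{\nu(h)}\circ\Gamma=\lim_h\psi_{\nu(h)}\circ\gamma_{\nu(h)}=\beta$. That route avoids proving continuity of $\Gamma$ and the submanifold subtlety, and you may prefer it; but your argument, suitably annotated on these two small points, is also correct.

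One small omission: in (b) you should also record the easy inclusion $\Psi(Z)\subset[z_m]$, which follows from $k_\Omega(\alpha_m(z),z_m)=k_\Omega(\alpha_m(z),\alpha_m(0))\le k_Z(z,0)<\infty$.
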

\begin{remark}
Such family $(\alpha_n)$ is a canonical  inverse limit for the sequence of iterates $(f^{m-n}\colon\Omega\to\Omega)$ associated with $[z_m]$, see \cite[Definition 6.9]{Ar}.
\end{remark}

Once Proposition \ref{backcore} is proved, the proof of Theorem \ref{backastratto} is the same as the proof of \cite[Theorem 8.7]{Ar}. We  present a sketch of the construction of the pre-model for the convenience of the reader.
\begin{proof}[Proof of Theorem \ref{backastratto}]
Following the proof of \cite[Theorem 8.7]{Ar} we define $\ell:=\alpha_0$ and $\gamma_n:=f\circ \alpha_n$. It is not hard to show that $(\gamma_n\colon Z\rightarrow \Omega)$ is a family of holomorphic mappings satisfying $\gamma_n=f^{m-n}\circ\gamma_m$ for all $m\ge n\ge 0$. Furthermore since $(z_m)$ has bounded step, we have 
$$
\sup_m k_\Omega(\gamma_m(0),z_m)=\sup_m k_\Omega(z_{m-1},z_m)<\infty.
$$
It follows that $(\gamma_m(x))\in[z_m]$ for every $x\in Q$. 

By the universal property of the family $(\alpha_n)$ there exists a unique holomorphic map $\tau:Z\rightarrow Z$ such that for all $n\ge 0$,
\begin{equation}\label{backcomm}
\alpha_n\circ\tau=\gamma_n= f\circ\alpha_n,
\end{equation}
in particular $\ell\circ\tau= f\circ \ell$.

Similarly if we define $\tilde\gamma_n:=\alpha_{n+1}$ we obtain a holomorphic map $\delta:Z\rightarrow Z$ such that $\tilde \gamma_n=\alpha_n\circ\delta$ for all $n\ge 0$. It is easy to see that, for all $n$,
$$
\alpha_n\circ\tau\circ\delta=\alpha_n\circ\delta\circ\tau=\alpha_n.
$$
By the universal property of the family $(\alpha_n)$ described in Proposition \ref{backcore}, we conclude that $\delta=\tau^{-1}$, proving that $\tau$ is an automorphism of $Z$. 
For all $n\geq 0$ we have that 
$$\alpha_n\circ \tau^n=f^n\circ \alpha_n=\ell,$$
and thus $\alpha_n=\ell\circ \tau^{-n}$ for all $n\geq 0$.
The triple $(Z,\ell,\tau)$ is a pre-model associated with $[z_m]$  thanks to \eqref{backtherewolf}.
Since $Z$ is a holomorphic retract of $\B^q$, it is biholomorphic to a ball of dimension $0\le k\le q$.

The universal property of the canonical pre-model $(Z,\ell,\tau)$ is  a direct consequence of the universal property of the family $(\alpha_n)$.
Since $\ell=V_0\circ \Psi$, point $(1)$ is an immediate consequence of \eqref{backtherewolf}.
Point (2) easily follows from (\ref{backulula}).


Finally, let $\beta:=(w_m)$ be a backward orbit in the class $[z_m]$. By \eqref{backtherewolf} there exists $z\in Z$ so that $(w_m)=(\alpha_m(z))$. Using again \eqref{backulula}, we obtain the following formula for the backward $m$-step of $\beta$, which directly implies $(3)$ 
\begin{align*}
s_m(\beta)=\lim_{n\to\infty}k_\Omega(\alpha_{n-m}(z),\alpha_{n}(z))=\lim_{n\to\infty}k_\Omega(\alpha_n\circ\tau^{m}(z),\alpha_n(z))=k_Z(z,\tau^m(z)).
\end{align*}
 
\end{proof}

The proof of Proposition \ref{backcore} is articulated  in several intermediate lemmas. Let $(z_m)$ be a backward orbit with bounded step satisfying $S_\Omega(z_m)\to 1$. Let $(\psi_m\colon \Omega\rightarrow \B^q)$ be a sequence of holomorphic injective maps with $\psi_m(z_m)=0$ and
$$
\psi_m(\Omega)\supset B(0,S_\Omega(z_m)).
$$

Given a compact subset $K\subset \B^q$, the map $\psi_m^{-1}$ is well defined on $K$ when $m$ is large enough. Since $f^{m}\circ\psi_{m}^{-1} (0)=z_0$ for all $m\ge 0$ and since $\Omega$ is taut, there exists a subsequence $(m_0(h))$ such that the sequence $(f^{m_0(h)}\circ\psi_{m_0(h)}^{-1} )$ converges uniformly on compact subsets to a holomorphic map $\alpha_0\colon \B^q\to\Omega$ and $\alpha_0(0)=z_0$. Similarly, there exists a subsequence  $(m_1(h))$ of $(m_0(h))$ such  that the sequence $(f^{m_1(h)-1}\circ\psi_{m_1(h)}^{-1} )$ converges uniformly on compact subsets to a holomorphic map $\alpha_1\colon \B^q\to\Omega$ and $\alpha_1(0)=z_1$.
Iterating this procedure we obtain a family of subsequences $\{(m_n(h))_{h\geq 0}\}_{n\geq 0}$ and a family of holomorphic maps
$$(\alpha_n\colon \B^q\to\Omega)_{n\geq 0}$$ 
such that  $$f^{m_n(h)-n}\circ\psi_{m_n(h)}^{-1}  \stackrel{h\to\infty}\longrightarrow \alpha_n$$ uniformly on compact subsets and $\alpha_n(0)=z_n$.
Notice that for all $m\geq n\geq 0$,
\begin{equation}
\label{backjonsnow}
\alpha_{n}=f^{m-n}\circ\alpha_m.
\end{equation}
Consider the diagonal sequence $\nu(h):=m_h(h)$ which for all $j\geq 0$ is eventually a subsequence of $(m_j(h))_{h\geq 0}.$

Consider the sequence $\beta_{\nu(h)}:=\psi_{\nu(h)}\circ\alpha_{\nu(h)}$. Notice that  $\beta_{\nu(h)}(0)=0$  for all $h\geq 0$. By the tautness of $\Omega$, up to extracting a further subsequence of $\nu(h)$ if necessary, we may assume that the sequence $(\beta_{\nu(h)})$ converges uniformly on compact subsets to a holomorphic map $\alpha\colon \B^q\to \B^q.$

\begin{lemma}
For all $j\geq 0$, we have
\begin{equation}\label{backstabilizza}
\alpha_j\circ\alpha=\alpha_j.
\end{equation}
\end{lemma}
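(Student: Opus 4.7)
The plan is to mimic the forward iteration argument (Lemma used to show $\alpha \circ \alpha_j = \alpha_j$ in the proof of Proposition \ref{core}), but with the order of composition reversed to reflect that here the maps $\alpha_n$ go from $\B^q$ into $\Omega$ rather than the other way around. The key identity to exploit is \eqref{backjonsnow}, namely $\alpha_j = f^{m-j} \circ \alpha_m$ for all $m \geq j$, combined with the fact that $\psi_{\nu(h)}^{-1}$ is the identity on $\psi_{\nu(h)}(\Omega)$ whenever it is defined.

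First I would fix $j \geq 0$ and an arbitrary $z \in \B^q$. For $h$ sufficiently large we have $\nu(h) \geq j$, so by \eqref{backjonsnow},
$$
\alpha_j(z) = (f^{\nu(h)-j} \circ \alpha_{\nu(h)})(z) = \bigl(f^{\nu(h)-j} \circ \psi_{\nu(h)}^{-1}\bigr)\bigl(\psi_{\nu(h)}(\alpha_{\nu(h)}(z))\bigr) = \bigl(f^{\nu(h)-j} \circ \psi_{\nu(h)}^{-1}\bigr)\bigl(\beta_{\nu(h)}(z)\bigr),
$$
where one needs to note that $\psi_{\nu(h)}(\alpha_{\nu(h)}(z))$ does lie in the domain of definition $\psi_{\nu(h)}(\Omega)$ of $\psi_{\nu(h)}^{-1}$.

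Next I would pass to the limit $h \to \infty$. Since $\nu$ is a diagonal subsequence, eventually $\nu(h)$ is a subsequence of $(m_j(h))_{h\geq 0}$, so $f^{\nu(h)-j} \circ \psi_{\nu(h)}^{-1} \to \alpha_j$ uniformly on compact subsets of $\B^q$. On the other hand $\beta_{\nu(h)}(z) \to \alpha(z)$ by construction, so the sequence $(\beta_{\nu(h)}(z))_h$ lies in a compact subset of $\B^q$. Standard reasoning for uniform convergence on compacts then gives
$$
\bigl(f^{\nu(h)-j} \circ \psi_{\nu(h)}^{-1}\bigr)\bigl(\beta_{\nu(h)}(z)\bigr) \longrightarrow \alpha_j(\alpha(z)),
$$
which combined with the previous display yields $\alpha_j(z) = \alpha_j(\alpha(z))$, as desired.

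The main conceptual point (there is no real obstacle) is the compatibility between pointwise convergence of the ``inner'' sequence $\beta_{\nu(h)}(z)$ and uniform-on-compacts convergence of the ``outer'' sequence $f^{\nu(h)-j} \circ \psi_{\nu(h)}^{-1}$; this is handled by observing that convergent sequences in $\B^q$ are relatively compact. Once this lemma is established, the next natural step (in parallel with the forward case) will be to use it to show that $\alpha \colon \B^q \to \B^q$ is a holomorphic retraction, which then lets us set $Z := \alpha(\B^q)$ and view the $\alpha_j$ as maps into the retract $Z$.
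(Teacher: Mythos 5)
Your proof is correct and is essentially identical to the one in the paper: both insert $\psi_{\nu(h)}^{-1}\circ\psi_{\nu(h)}$ into the identity $\alpha_j=f^{\nu(h)-j}\circ\alpha_{\nu(h)}$, identify $\beta_{\nu(h)}=\psi_{\nu(h)}\circ\alpha_{\nu(h)}$, and pass to the limit using uniform-on-compacts convergence of the outer factor together with pointwise convergence $\beta_{\nu(h)}(z)\to\alpha(z)$. You simply spell out the compatibility of the two limits more explicitly than the paper does.
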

\begin{proof}
Let $z\in \B^q$. For all positive integers $h$ such that $\nu (h)\geq j$,
$$\alpha_j(z)=f^{\nu(h)-j}\circ\alpha_{\nu(h)}(z)=f^{\nu(h)-j}\circ\psi_{\nu(h)}^{-1}\circ\psi_{\nu(h)}\circ\alpha_{\nu(h)}(z)\stackrel{h\to\infty}\longrightarrow \alpha_j\circ\alpha(z).$$
\end{proof}
\begin{lemma}
The map $\alpha\colon  \B^q\to \B^q$ is a holomorphic retraction, that is
$$\alpha\circ \alpha=\alpha.$$
\end{lemma}
\begin{proof}
Let $z\in \B^q$.
From \eqref{backstabilizza} we get, for all $h\geq 0$ big enough,
$$\beta_{\nu(h)}\circ\alpha(z)=\psi_{\nu(h)}\circ \alpha_{\nu(h)}\circ\alpha(z)=\psi_{\nu(h)}\circ\alpha_{\nu(h)}(z)=\beta_{\nu(h)}(z),$$
and the result follows since $\beta_{\nu(h)}\to \alpha$.
\end{proof}
Define $Z:=\alpha(\B^q)$. Being a holomorphic retract, it is a closed complex submanifold of $\B^q$, biholomorphic to a $k$-dimensional ball $\B^k$, with $0\leq k\leq q$. 

By the universal property of the inverse limit $(\Theta, V_n)$, there exists a unique map $\Psi\colon Z\to \Theta$ such that 
$$
\alpha_n=V_n\circ\Psi, \quad \forall\, n\geq 0.
$$
The mapping $\Psi$ sends the point $z\in Z$ to the backward orbit $(\alpha_m(z))_{m\geq 0}$.

\begin{lemma}\label{backgrosso}
The map $\Psi\colon  Z\to\Theta$ is injective and $\Psi(Z)=[z_m]$.
\end{lemma}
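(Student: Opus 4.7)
The plan is to prove the two assertions separately, exploiting in both cases the uniform-on-compacta convergence $\beta_{\nu(h)} \to \alpha$ together with the fact that $\alpha$ restricts to the identity on $Z$.

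For injectivity, suppose that $\Psi(z)=\Psi(w)$ for $z,w\in Z$, so that $\alpha_n(z)=\alpha_n(w)$ for every $n\geq 0$. Applying $\psi_{\nu(h)}$ to the equality $\alpha_{\nu(h)}(z)=\alpha_{\nu(h)}(w)$ gives $\beta_{\nu(h)}(z)=\beta_{\nu(h)}(w)$ for every $h$. Passing to the limit $h\to\infty$ yields $\alpha(z)=\alpha(w)$, and since $z,w\in Z$ we have $\alpha(z)=z$ and $\alpha(w)=w$, so $z=w$.

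For the inclusion $\Psi(Z)\subseteq[z_m]$, fix $z\in Z$ and note that $(\alpha_m(z))$ is a backward orbit thanks to (\ref{backjonsnow}). Since each $\alpha_m$ is a holomorphic map from $\B^q$ to $\Omega$ and $\alpha_m(0)=z_m$, the contraction property of the Kobayashi distance gives
\[
k_\Omega(\alpha_m(z),z_m)=k_\Omega(\alpha_m(z),\alpha_m(0))\leq k_{\B^q}(z,0),
\]
which is a finite bound independent of $m$.

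The main step, and the delicate one, is the reverse inclusion $[z_m]\subseteq\Psi(Z)$. Given $(w_m)\in[z_m]$, put $M:=\sup_{m}k_\Omega(w_m,z_m)<\infty$. Since $\psi_m\colon\Omega\to\B^q$ is holomorphic with $\psi_m(z_m)=0$, the contraction property yields $k_{\B^q}(\psi_m(w_m),0)\leq M$ for all $m$, so the sequence $\psi_{\nu(h)}(w_{\nu(h)})$ remains in a fixed Kobayashi ball around the origin of $\B^q$, hence in a compact subset of $\B^q$. Up to a further subsequence (which I extract once and for all from $\nu(h)$ and still call $\nu(h)$) we may assume $\psi_{\nu(h)}(w_{\nu(h)})\to z$ for some $z\in\B^q$. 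The strategy is now to check simultaneously that $\alpha_n(z)=w_n$ for every $n$ and that $z\in Z$. For the first assertion, since for $h$ large $\nu(h)$ is a tail of $m_n(h)$, we have $f^{\nu(h)-n}\circ\psi_{\nu(h)}^{-1}\to\alpha_n$ uniformly on compact subsets of $\B^q$; evaluating this composition at $\psi_{\nu(h)}(w_{\nu(h)})$ gives exactly $f^{\nu(h)-n}(w_{\nu(h)})=w_n$, and the standard fact that $\phi_h(x_h)\to\phi(x)$ whenever $\phi_h\to\phi$ uniformly on compacta and $x_h\to x$ yields $\alpha_n(z)=w_n$. For the second assertion, once $\alpha_n(z)=w_n$ is known, taking $n=\nu(h)$ gives $\beta_{\nu(h)}(z)=\psi_{\nu(h)}(w_{\nu(h)})\to z$; on the other hand $\beta_{\nu(h)}(z)\to\alpha(z)$, so $\alpha(z)=z$ and therefore $z\in Z$, proving $(w_m)=\Psi(z)$.
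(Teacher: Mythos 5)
Your proof is correct and follows essentially the same route as the paper: for injectivity you pass the equality $\alpha_{\nu(h)}(z)=\alpha_{\nu(h)}(w)$ through $\psi_{\nu(h)}$ and use $\alpha|_Z=\mathrm{id}$; for $\Psi(Z)\subseteq[z_m]$ you apply Kobayashi contraction to $\alpha_m$; and for $[z_m]\subseteq\Psi(Z)$ you bound $\psi_m(w_m)$ in a compact set, extract a convergent subsequence to a point $z$, then show $\alpha_n(z)=w_n$ by evaluating the convergent compositions and finally $\alpha(z)=z$ via the same evaluation with $n=\nu(h)$, exactly as in the paper.
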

\begin{proof}
We first prove injectivity. Let $z,w\in Z$ such that $\alpha_m(z)=\alpha_m(w)$ for all $m\ge 0$. It follows that 
$$
\alpha(z)=\lim_{h\to\infty}\psi_{\nu(h)}\circ\alpha_{\nu(h)}(z)=\lim_{h\to\infty}\psi_{\nu(h)}\circ\alpha_{\nu(h)}(w)=\alpha(w).
$$
Since $\alpha|_Z$ is the identity, we conclude that $z=w$ and that $\Psi$ is injective.

Given $z\in Z$ it follows that
$$
\sup_m k_\Omega(\alpha_m(z),z_m)=\sup_m k_\Omega(\alpha_m(z),\alpha_m(0))\le k_Z(z,0)<\infty,
$$
proving that $\Psi(Z)\subset [z_m]$. On the other hand, given $(w_m)\in[z_m]$ we have
$$
\sup_m k_{\B^q}(\psi_m(w_m),0)=\sup_m k_{\B^q}(\psi_m(w_m),\psi_m(z_m))<\infty,
$$
By taking a subsequence of $\nu(h)$ if necessary, we may therefore assume that the sequence $\psi_{\nu(h)}(w_{\nu(h)})$ converges to a point $w\in \B^q$. Now we notice that for all $m\ge 0$,
$$
\alpha_m(w)=\lim_{h\to\infty}f^{\nu(h)-m}\circ\psi_{\nu(h)}^{-1}\circ\psi_{\nu(h)}(w_{\nu(h)})=\lim_{h\to\infty}f^{\nu(h)-m}(w_{\nu(h)})=w_m.
$$
 Now notice that 
$$
\alpha(w)=\lim_{h\to\infty}\psi_{\nu(h)}\circ\alpha_{\nu(h)}(w)=\lim_{h\to\infty}\psi_{\nu(h)}(w_{\nu(h)})=w,
$$
proving that $w\in Z$, and therefore that $\Psi(Z)= [z_m]$.
 \end{proof}
 
\begin{lemma}\label{backx4}
For all $n\geq 0$,
\begin{equation*}
\lim_{m\to\infty}  \alpha_m^*\,(f^{n})^*\, k_\Omega= k_Z.
\end{equation*}
\end{lemma}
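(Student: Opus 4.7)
By the cocycle identity \eqref{backjonsnow}, $f^n\circ\alpha_m=\alpha_{m-n}$ for $m\geq n$, so $\alpha_m^*(f^n)^*k_\Omega=\alpha_{m-n}^*k_\Omega$, and the claim reduces to showing that $\lim_m\alpha_m^* k_\Omega=k_Z$ pointwise on $Z\times Z$. For fixed $x,y\in Z$ I would set $d_m:=k_\Omega(\alpha_m(x),\alpha_m(y))$ and squeeze this sequence between matching upper and lower bounds.

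For the upper bound I would verify that $(d_m)$ is non-decreasing and bounded above by $k_Z(x,y)$. The monotonicity is immediate from the dual identity $\alpha_m=f\circ\alpha_{m+1}$ (a special case of \eqref{backjonsnow}) together with the distance-decreasing property of $f$, which reverses the direction of the corresponding forward-case argument. The upper bound comes from the fact that $Z$ is a holomorphic retract of $\B^q$ (so $k_Z=k_{\B^q}|_Z$) combined with $\alpha_m\colon Z\to\Omega$ being holomorphic, hence distance-decreasing. Therefore the limit $L:=\lim_m d_m$ exists and satisfies $L\leq k_Z(x,y)$.

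For the matching lower bound, the strategy is to exploit the approximation $\beta_{\nu(h)}=\psi_{\nu(h)}\circ\alpha_{\nu(h)}\to\alpha$ on compact subsets of $\B^q$, where $\alpha|_Z={\sf id}_Z$, so in particular $\beta_{\nu(h)}(x)\to x$ and $\beta_{\nu(h)}(y)\to y$. The key observation is that since each $\psi_{\nu(h)}$ is injective on $\Omega$, it is a biholomorphism onto its image $\psi_{\nu(h)}(\Omega)$; combined with the inclusion $\psi_{\nu(h)}(\Omega)\subset\B^q$ (which forces $k_{\B^q}\leq k_{\psi_{\nu(h)}(\Omega)}$), this yields
\begin{equation*}
k_{\B^q}\bigl(\beta_{\nu(h)}(x),\beta_{\nu(h)}(y)\bigr)\leq k_{\psi_{\nu(h)}(\Omega)}\bigl(\beta_{\nu(h)}(x),\beta_{\nu(h)}(y)\bigr)=k_\Omega\bigl(\alpha_{\nu(h)}(x),\alpha_{\nu(h)}(y)\bigr)=d_{\nu(h)}.
\end{equation*}
Letting $h\to\infty$ and using continuity of $k_{\B^q}$, the left-hand side converges to $k_{\B^q}(x,y)=k_Z(x,y)$, giving $L\geq k_Z(x,y)$. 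Combined with the upper bound this concludes the proof.

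I do not expect a genuine obstacle: the argument is closely parallel to that of Lemma \ref{x4}, and essentially dual to it. The only nuance one must be careful about is the direction of the distance-decreasing inequalities: because here $\alpha_m$ goes from $Z$ into $\Omega$ rather than the other way around, the roles of the two halves of the forward proof are interchanged, so that the upper bound is now the ``easy'' monotonic side while the squeezing approximation via the maps $\psi_{\nu(h)}$ is needed for the lower bound.
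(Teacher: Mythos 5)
Your proof is correct, and it is worth pointing out that the lower-bound argument you give is actually the one that should appear in the paper. The paper's proof reduces to $n=0$ and handles the upper bound $\lim_m k_\Omega(\alpha_m(x),\alpha_m(y))\le k_Z(x,y)$ exactly as you do, by non-expansiveness of $\alpha_m\colon Z\to\Omega$. However, the subsequent passage that announces ``to obtain the inverse inequality'' then reproduces verbatim the ball-$B$ device from Lemma \ref{x4}: it picks $B\subset\subset\B^q$ with $x,y\in B$, uses $\psi_{\nu(h)}(\Omega)\supset B$ to write $k_\Omega(\alpha_{\nu(h)}(x),\alpha_{\nu(h)}(y))\le k_B(z_h,w_h)\le k_{\B^q}(x,y)+\varepsilon$, and concludes $\lim_m k_\Omega(\alpha_m(x),\alpha_m(y))\le k_Z(x,y)+\varepsilon$ --- which is a second upper bound, not the needed lower bound. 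That sign confusion evidently comes from transplanting the forward-case computation (where the easy side is the lower bound and the ball argument supplies the matching upper bound) without interchanging the roles of the two halves, a caveat you flag explicitly.

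Your argument for the lower bound is the right one: since $\psi_{\nu(h)}$ is a biholomorphism onto $\psi_{\nu(h)}(\Omega)\subset\B^q$, one has
\begin{equation*}
k_\Omega(\alpha_{\nu(h)}(x),\alpha_{\nu(h)}(y))=k_{\psi_{\nu(h)}(\Omega)}\bigl(\beta_{\nu(h)}(x),\beta_{\nu(h)}(y)\bigr)\ge k_{\B^q}\bigl(\beta_{\nu(h)}(x),\beta_{\nu(h)}(y)\bigr),
\end{equation*}
and the right-hand side tends to $k_{\B^q}(x,y)=k_Z(x,y)$ because $\beta_{\nu(h)}\to\alpha$ with $\alpha|_Z={\sf id}$. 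Together with the monotonicity of $\bigl(k_\Omega(\alpha_m(x),\alpha_m(y))\bigr)_m$ this squeezes the limit to $k_Z(x,y)$. Note that your version is also slightly cleaner than what the paper attempts: you need only the inclusion $\psi_{\nu(h)}(\Omega)\subset\B^q$ and continuity of $k_{\B^q}$, and can dispense entirely with the auxiliary ball $B$ and the squeezing-function lower bound $\psi_{\nu(h)}(\Omega)\supset B$, which in the backward case is genuinely not needed for this lemma.
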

\begin{proof}
First of all we notice that for every $m\ge 0$ and $x,y\in Z$
$$
k_\Omega(\alpha_m(x),\alpha_m(y))=k_\Omega((f\circ\alpha_{m+1})(x),(f\circ\alpha_{m+1})(y))\le k_\Omega(\alpha_{m+1}(x),\alpha_{m+1}(y)),
$$
therefore the limit for $m\to\infty$ is well defined,

For all $n\ge 0$ we have that
$$
 \lim_{m\to\infty}k_\Omega(f^n \circ \alpha_m(x),f^n \circ \alpha_m(y))= \lim_{m\to\infty}k_\Omega(\alpha_{m-n}(x),\alpha_{m-n} (y)),
$$
proving that 
$$
\lim_{m\to\infty}  \alpha_m^*\,(f^{n})^*\, k_\Omega=\lim_{m\to\infty}  \alpha_m^*\, k_\Omega.
$$

By non-expansiveness of the Kobayashi distance we have
$$
\lim_{m\to\infty}k_\Omega(\alpha_m(x),\alpha_m(y))\le k_Z(x,y).
$$
To obtain the inverse inequality denote $z_h=\psi_{\nu(h)}\circ\alpha_{\nu(h)}(x)$ and $w_h=\psi_{\nu(h)}\circ\alpha_{\nu(h)}(y)$. Then $z_h$ converges to $x$ and $w_h$ converges to $y$. Fix $\varepsilon>0$, then there exists a ball $B=B(0,r)\subset\subset \B^q$, with radius close enough to $1$ that contains both $x,y$, and such that for some $h_0\ge 0$, we have
$$
k_B(z_h,w_h)\le k_{\B^q}(x,y)+\varepsilon,\qquad\forall h\ge h_0.
$$
Let $h_1\ge 0$ such that for all $h\ge h_1$ we have $\psi_{\nu(h)}(\Omega)\supset B$. Then for all $h\ge\max\{h_0,h_1\}$ we have
$$
k_\Omega(\alpha_{\nu(h)}(x),\alpha_{\nu(h)}(y))\le k_B(z_h,w_h)\le k_{\B^q}(x,y)+\varepsilon,
$$
proving that for every $\varepsilon>0$
$$
\lim_{m\to\infty}k_\Omega(\alpha_{m}(x),\alpha_{m}(y))\le k_Z(x,y)+\varepsilon,
$$
where we used that fact that $x,y\in Z$ and $k_{\B^q}|_Z=k_Z$.
\end{proof}

We are now ready to prove  Proposition \ref{backcore}. Points $(a)$, $(b)$, and $(c)$ correspond precisely to \eqref{backjonsnow}, Lemma \ref{backgrosso} and Lemma \ref{backx4}.

It remains to prove the Universal property $(d)$. Let $(\beta_n\colon Q\to \Omega)$ be a family of holomorphic maps satisfying  $\beta_n=f^{m-n}\circ \beta_m$ for all  $m\geq n\geq 0$ and $(\beta_m(x))\in[z_m]$ for every $x\in Q$. By the universal property of the inverse limit, there exists a unique map $\Phi\colon Q\to \Theta$ such that $\beta_n=V_n\circ \Phi$ for all $n\geq 0$. It is not hard to show that $\Phi(x)=(\beta_m(x))\in [z_m]$.

By Lemma \ref{backgrosso} the map $\Psi\colon Z\to [z_m]$ is a bijection. Therefore we may set
$$
\Gamma\coloneqq \Psi^{-1}\circ\Phi \colon Q\to Z.
$$
Given $x\in Q$ it follows that 
$$
(\beta_m(x))=(\Psi\circ\Gamma)(x)=\Phi(x)=((\alpha_m\circ\Gamma)(x)),
$$
proving that $\beta_m=\alpha_m\circ\Gamma$ for all $m\ge 0$. Given another $\Gamma'$ with the same properties it is immediate to show that $\Gamma'=\Psi^{-1}\circ\Phi$, proving uniqueness of the map $\Gamma$.

Finally the map $\Gamma$ is holomorphic. Indeed given $x\in Q$, since $(\beta_m(x))\in[z_m]$ it follows that
$$
\sup_m k_{\B^q}(\psi_m\circ\beta_m(x),0)= \sup_m k_{\B^q}(\psi_m\circ\beta_m(x),\psi_m(z_m))\le k_\Omega(\beta_m(x),z_m)<\infty.
$$ 
The domain $\Omega$ is taut and the sequence $\psi_m\circ\beta_m$ is not compactly divergent. By taking a subsequence of $\nu(h)$ if necessary, we may therefore assume that $\psi_{\nu(h)}\circ\beta_{\nu(h)}\to \beta$ where $\beta\colon Q\to \B^q$ is an holomorphic function. Finally for every $x\in Q$ we have that
$$
\Gamma(x)=\alpha\circ\Gamma(x)=\lim_{h\to\infty}\psi_{\nu(h)}\circ\alpha_{\nu(h)}\circ\Gamma(x)=\beta(x),
$$
proving that $\Gamma$ is holomorphic, which concludes the proof of point $(d)$ and of Proposition \ref{backcore}.

\section{Main result on strongly convex domains}
In this section we apply the results of the previous section to the case of strongly convex domains, and we prove Theorem~\ref{teoconvessoindietro}.
First of all,  on a strongly convex domain it is easy to characterize when a canonical pre-model is $0$-dimensional (and thus  its base space is a point $\{\star\}$).
Recall that a self-map $f$ is called {\sl strongly elliptic} if it is elliptic and its limit manifold is a fixed point $\{p\}$.

\begin{lemma}
Let $\Omega\subset \subset \C^q$ be a  strongly convex domain with $C^3$ boundary, and let $(Z,\ell,\tau)$ be a canonical pre-model associated with a class $\mathscr{C}$ of backward orbits. Then $Z$ is $0$-dimensional if and only if $f$ is strongly elliptic and the class  $\mathscr{C}$ contains only the constant orbit $p$, where $\{p\}$ is the limit manifold of $f$.
\end{lemma}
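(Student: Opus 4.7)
The plan is to handle the two implications separately, both revolving around the universal property of canonical pre-models.

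For the $(\Leftarrow)$ direction, I would argue that the trivial triple $(\{\star\}, \ell_0, \mathrm{id})$, with $\ell_0(\star) = p$, is itself a canonical pre-model associated with $\mathscr{C}$. It is a pre-model since $f(p)=p$, and it is associated with $\mathscr{C}$ because its unique associated backward orbit is the constant one at $p$. The universal property reduces to showing that any pre-model $(\Lambda, h, \v)$ associated with $\mathscr{C}$ satisfies $h \equiv p$, which is forced directly by the associativity condition in Definition \ref{definizionecanprem} together with the hypothesis that $\mathscr{C}$ contains only the constant orbit at $p$; the required morphism is then the unique constant map $\Lambda \to \{\star\}$. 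Uniqueness of canonical pre-models (Remark \ref{boundedstepremark}) then yields $Z \cong \{\star\}$.

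For the $(\Rightarrow)$ direction, assume $Z$ is $0$-dimensional. Then $\tau = \mathrm{id}$, and setting $p := \ell(\star)$ the intertwining $f\circ \ell = \ell\circ\tau$ yields $f(p) = p$, so $f$ is elliptic. Applying Theorem~\ref{backastratto}(1), every backward orbit in $\mathscr{C}$ is of the form $(\ell\circ\tau^{-m}(z))$ for a unique $z \in Z$, hence $\mathscr{C}$ reduces to the single constant orbit at $p$. The main remaining task, and the technical core of the proof, is to upgrade ellipticity to strong ellipticity, i.e.\ to show that the limit manifold $M$ of $f$ is $\{p\}$.

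To do this I would argue by contradiction, assuming $\dim M \geq 1$. By the classical iteration theorem on taut manifolds (strongly convex domains are taut), $M$ is a holomorphic retract of $\Omega$ containing $p$, and $f|_M$ is an automorphism of $M$, hence a $k_M$-isometry. Picking $q \in M \setminus \{p\}$ and defining $w_m := (f|_M)^{-m}(q)$ I obtain a backward orbit of $f$ contained in $M$ satisfying
$$k_\Omega(p, w_m) = k_M(p, w_m) = k_M(p, q),$$
where the first equality uses that $M$ is a holomorphic retract (so $k_M = k_\Omega|_{M\times M}$) and the second uses that $f|_M$ is a $k_M$-isometry fixing $p$. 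Thus $(w_m)$ stays at constant Kobayashi distance from $p$, so $(w_m) \in \mathscr{C}$, contradicting that $\mathscr{C}=\{(p,p,\ldots)\}$ since $w_0 = q \neq p$. The main obstacle I expect is precisely this step: producing a non-constant backward orbit inside $\mathscr{C}$ requires invoking iteration theory (the limit manifold structure and the automorphism property of $f|_M$) rather than only the pre-model framework.
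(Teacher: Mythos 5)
Your $(\Leftarrow)$ direction is correct and essentially matches what the paper leaves as ``immediate.''

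Your $(\Rightarrow)$ direction has a genuine gap, and it is precisely the step you flagged as doing all the work. You invoke Theorem~\ref{backastratto}(1) to conclude that every backward orbit in $\mathscr{C}$ has the form $(\ell\circ\tau^{-m}(z))$. But Theorem~\ref{backastratto} is not an abstract statement about all canonical pre-models: it assumes as a hypothesis the existence of a backward orbit $(z_m)$ with bounded step along which $S_\Omega(z_m)\to 1$, and its conclusion (1) is a property of the particular family $(\alpha_n)$ built from the squeezing maps $\psi_m$, not a formal consequence of the universal property in Definition~\ref{definizionecanprem}. When $Z$ is a point, $\mathscr{C}$ is the class of the constant orbit at an \emph{interior} fixed point $p$, along which $S_\Omega$ is constant and (generically) strictly less than $1$; Theorem~\ref{backastratto} is therefore not even available to produce the canonical pre-model for this class, let alone to describe its orbits. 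So the claim ``$\mathscr{C}$ reduces to the single constant orbit at $p$'' is not established, and the contradiction you derive in the strong-ellipticity step collapses, since it leans on that claim.

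The ingredients you assembled are the right ones; what is missing is to apply the universal property to the pre-model built from the limit manifold rather than to individual orbits. The paper's argument is: $\mathcal{M}$ is a holomorphic retract of $\Omega$, $f|_{\mathcal{M}}$ is an automorphism, so $(\mathcal{M},{\sf id},f|_{\mathcal{M}})$ is a pre-model; it is associated with $\mathscr{C}$ because $p\in\mathcal{M}$ is fixed (this is exactly the content of your $k_M$-isometry computation, packaged as a pre-model statement). The universal property gives a morphism $\hat\eta\colon (\mathcal{M},{\sf id},f|_{\mathcal{M}})\to(\{\star\},\ell,\tau)$, i.e.\ a map $\eta\colon\mathcal{M}\to\{\star\}$ with $\ell\circ\eta={\sf id}_{\mathcal{M}}$; since $\ell\circ\eta$ is the constant $p$, we get $\mathcal{M}=\{p\}$, hence strong ellipticity, \emph{before} knowing anything about $\mathscr{C}$. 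Only afterwards does one characterize $\mathscr{C}$: by \cite[Lemma 2.9]{AbRa}, once $f$ is strongly elliptic every non-constant backward orbit with bounded step escapes to $\partial\Omega$, so by completeness of $k_\Omega$ it leaves every Kobayashi ball around $p$ and cannot lie in $\mathscr{C}$. You would need to insert this last step as well, since reversing the order of deduction is what created the gap.
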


\begin{proof}
Assume $Z=\{\star\}$, and set $p:=\ell(\star)$. Clearly $p$ is fixed. By assumption the backward orbit $(\ell\circ\tau^{-m}(\star)))$, which is contantly equal to $p$, is in the class $\mathscr{C}$.

Let $\mathcal M$ be  the limit manifold where the forward dynamics of $f$ converges. 
The restriction  $f|_{\mathcal{M}}$ is an automorphism of $\mathcal{M}$ and $(\mathcal{M},{\sf id}, f|_{\mathcal{M}} )$ is a pre-model for $f$. 
Since $p\in \mathcal{M} $ is a fixed point for $f$, the pre-model $(\mathcal{M},{\sf id}, f|_{\mathcal{M}} )$ is associated with the class $\mathscr{C}$. By the universal property of the canonical pre-model $(Z,\ell,\tau)$, there exists a morphism $\hat\eta\colon(\mathcal{M},{\sf id}, f|_{\mathcal{M}} )\to (Z,\ell,\tau)$, which means that the identity map ${\sf id}\colon \mathcal{M}\to \mathcal{M}$ is equal to the constant map $  \mathcal{M}\to \{p\}$. Hence  $\mathcal M=\{p\}$ and the map $f$ is strongly elliptic.

%

 By \cite[Lemma 2.9]{AbRa} any other backward orbit with bounded step $(w_m)$ converges to $\partial \Omega$. Since $\Omega$ is complete hyperbolic, it follows that $k_\Omega(w_m,p)\to +\infty$, and thus $(w_m)\not\in \mathscr{C}.$
The converse is immediate.
\end{proof}

Let $\Omega\subset \subset \C^q$ be a bounded strongly convex domain with $C^3$ boundary.
Let $f\colon \Omega\to \Omega$ be a  holomorphic self-map, and let $\zeta\in \partial \Omega$ be  a repelling boundary point with dilation $\lambda_\zeta>1$.
\begin{defi}
The {\sl stable subset} $\mathcal{S}(\zeta)$ of $\zeta$ is the set of starting points of backward orbits with bounded step converging to $\zeta$.
We say that a pre-model  $(\Lambda,h,\varphi)$ is {\sl associated} with the boundary repelling point $\zeta$ if  for some (and hence for any) $x\in \Lambda$ we have $$\lim_{n\to \infty}h\circ\varphi^{-n}(x)= \zeta.$$
\end{defi}
We will later prove the two following results.
\begin{teo}[Uniqueness of backward orbits]\label{backuniq}
Let  $(x_m)$ and $(y_m)$ be two  backward orbits with bounded step, both converging to the boundary repelling fixed point $\zeta\in \partial \Omega$. Then 
$$
\lim_{m\to\infty}k_{\Omega}(x_m,y_m)< \infty.
$$
\end{teo}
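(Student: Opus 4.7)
The sequence $d_m := k_\Omega(x_m, y_m)$ is non-decreasing in $m$: from $f(x_{m+1}) = x_m$, $f(y_{m+1}) = y_m$ and the Kobayashi semicontractivity of $f$ one has
\[
 d_m = k_\Omega(f(x_{m+1}), f(y_{m+1})) \le k_\Omega(x_{m+1}, y_{m+1}) = d_{m+1}.
\]
Hence the theorem reduces to producing a uniform upper bound on $d_m$, and my plan is to do so by combining Julia's Lemma at $\zeta$ with a squeezing-function rescaling argument in the spirit of Section~\ref{canonicalbackward}.

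First I would extract horospherical information at $\zeta$ from Proposition~\ref{julialem}: the inclusion $f(E_\Omega(p,\zeta,R)) \subset E_\Omega(p,\zeta,\lambda_\zeta R)$, applied with $R = h_{\zeta,p}(x_{m+1})$ together with $f(x_{m+1}) = x_m$, yields $h_{\zeta,p}(x_m) \le \lambda_\zeta\, h_{\zeta,p}(x_{m+1})$; iterating gives $h_{\zeta,p}(x_m) \ge h_{\zeta,p}(x_0)\,\lambda_\zeta^{-m}$, and the same lower bound holds for $(y_m)$. Thus neither orbit can approach $\zeta$ along horospheres that shrink faster than $\lambda_\zeta^{-m}$. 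Combined with the bounded-step hypothesis and the standard fact that a backward orbit with bounded step in a strongly convex domain eventually enters a Koranyi region at its limit point, one concludes that both orbits settle in a common Koranyi region $K_\Omega(p,\zeta,M)$ and approach $\zeta$ along comparable horospheres.

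The core step, and the main obstacle, is to convert this horospherical and Koranyi information into a uniform bound on $d_m$. Since $\Omega$ is strongly pseudoconvex with $C^2$ boundary, Theorem~\ref{deng} yields $S_\Omega(x_m) \to 1$, and Proposition~\ref{maxattained} furnishes injective holomorphic maps $\psi_m \colon \Omega \to \B^q$ with $\psi_m(x_m) = 0$ and $\psi_m(\Omega) \supset B(0, S_\Omega(x_m))$, so that
\[
 d_m = k_{\psi_m(\Omega)}(0, \psi_m(y_m))
\]
and the problem becomes whether $(\psi_m(y_m))$ stays in a fixed compact subset of $\B^q$. Running the diagonal construction of Proposition~\ref{backcore} on $(x_m)$ produces a holomorphic retract $Z \subset \B^q$ and a family $\alpha_n \colon Z \to \Omega$ with $\alpha_n(0) = x_n$; the horospherical comparability and common Koranyi membership of the previous paragraph, transported through $\psi_m$ to the ball, force any subsequential accumulation point $w$ of $(\psi_m(y_m))$ to lie in the interior of $Z$, since otherwise the induced sequence in $\B^q$ would violate the ball version of Julia's inequality at the corresponding accumulation point of $(\psi_m(\zeta))$. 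The Kobayashi compatibility of Lemma~\ref{backx4} then delivers
\[
 \lim_{m \to \infty} d_m = k_Z(0, w) < \infty,
\]
completing the proof.
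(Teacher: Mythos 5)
Your opening observation that $d_m := k_\Omega(x_m,y_m)$ is non-decreasing is correct, and your reduction to a uniform upper bound is fine. The Julia-lemma computation giving $h_{\zeta,p}(x_m) \ge h_{\zeta,p}(x_0)\,\lambda_\zeta^{-m}$ is also correct. But the argument then has a genuine gap, concentrated entirely in the sentence beginning ``the horospherical comparability and common Koranyi membership\ldots transported through $\psi_m$ to the ball, force any subsequential accumulation point $w$ of $(\psi_m(y_m))$ to lie in the interior of $Z$.'' This is exactly the content of the theorem: showing $\psi_m(y_m)$ stays in a compact subset of $\B^q$ is equivalent to showing $d_m$ is bounded, since $\psi_m$ is nearly an isometry. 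You justify it by claiming that otherwise ``the induced sequence in $\B^q$ would violate the ball version of Julia's inequality at the corresponding accumulation point of $(\psi_m(\zeta))$,'' but this does not hold up: $\psi_m$ is only defined on $\Omega$, so $\psi_m(\zeta)$ is not meaningful without a boundary-extension argument that is not supplied; there is no identified map on $\psi_m(\Omega)$ to which Julia's inequality would apply in the rescaled picture; and the horospherical bounds you extracted (a lower bound on $h_{\zeta,p}(x_m)$ that tends to $0$, and the assertion that both orbits lie in some Koranyi region) do not control the ratio $h_{\zeta,p}(x_m)/h_{\zeta,p}(y_m)$ or otherwise preclude $d_m\to\infty$. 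In short, you assert the conclusion you need rather than derive it.

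The paper's proof proceeds along an entirely different route and does not use the squeezing-function rescaling at all for this theorem. It first reduces the problem via Lemma \ref{trasl} to bounding $\inf_{m\in\Z}k_\Omega(x_n,y_m)$ uniformly in $n$. Then, after showing (Lemma \ref{entrakor}) that each backward orbit with bounded step lies in a Koranyi region at $\zeta$, it exploits the Gromov hyperbolicity of $(\Omega,k_\Omega)$ (Lemma \ref{gromov}, via Proposition \ref{filippo}) to replace Koranyi regions by tubular neighborhoods $A(\gamma,M)$ of the geodesic ray $\gamma$ from a pole $p$ to $\zeta$. Since both orbits live within bounded Kobayashi distance of the same ray $\gamma$, and since the bounded-step hypothesis forces consecutive footpoints $a_m\in\gamma$ of $(y_m)$ to be at bounded distance from one another, every point of $\gamma$ is within bounded distance of some $y_m$, which gives the desired uniform bound. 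This geodesic-ray argument is what your proposal is missing: it is the concrete mechanism that converts ``both orbits approach $\zeta$ in a Koranyi region'' into ``both orbits are within bounded distance of each other.'' Proposition \ref{limits} in the paper plays a role only in the existence theorem (Theorem \ref{backward}), not in the uniqueness theorem, and its hypotheses (an exact alignment $\lim[k(0,x_n)-k(0,y_n)]=\lim k(x_n,y_n)$) are much stronger than ``bounded step,'' so it would not serve as the missing ingredient here either.
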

\begin{teo}[Existence of  backward orbits]\label{backward}
 Assume further that $\partial \Omega$ is   $C^4$. Then there exists a backward orbit $(z_m)$ with step $\log\lambda_\zeta$ converging to $\zeta$.
\end{teo}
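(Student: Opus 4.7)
The plan is to adapt to the strongly convex setting the construction of \cite{ArGu} carried out on the unit ball, compensating for the absence of transitivity of $\text{Aut}(\Omega)$ by comparing $\Omega$ near $\zeta$ with an auxiliary ball. In the ball case, one starts from a point in a horosphere $E(\zeta,R_0)$ and, using Julia's lemma, recursively chooses preimages $z_n$ lying in horospheres $E(\zeta,R_0\lambda_\zeta^{-n})$, then uses the transitive ball action to rescale each pair $(z_n,z_{n+1})$ onto a fixed reference horosphere so that the limit of the step size can be computed.

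The first and most delicate step is to find an element $\Phi\in\text{Aut}(\C^q)$ after which one may assume $\zeta=e_1$, that there exists $R_0>0$ with $E(e_1,R_0)\subset\Omega$, and that $\Omega\cap U\subset\B^q$ for some neighbourhood $U$ of $e_1$. For this, start from the $C^4$-smoothness of $\partial\Omega$ and apply Fefferman's biholomorphic normalisation to make $\partial\Omega$ osculate $\partial\B^q$ to second order at $e_1$; the remaining third-order error (controlled thanks to $C^4$) is absorbed by composing with a suitable parabolic automorphism of $\B^q$ fixing $e_1$, a ``push'' in the $\B^q$-normal direction that drags the local picture strictly inside $\B^q$ on one side and strictly outside on the other. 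The resulting germ is then upgraded to a global automorphism of $\C^q$ via Andersén--Lempert jet interpolation. The two inclusions just obtained, together with Lempert's theory and the standard localization of the Kobayashi distance at strongly convex points, imply that $k_\Omega$ and $k_{\B^q}$ differ by an $o(1)$ quantity on the shrinking horospheres $E(e_1,R_0\lambda_\zeta^{-n})$.

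With the coordinates fixed, I would then carry out the iterative pull-back. Julia's lemma (Proposition \ref{julialem}) gives $f(E_\Omega(p,\zeta,R))\subset E_\Omega(p,\zeta,\lambda_\zeta R)$, so starting from $z_0\in E_\Omega(p,\zeta,R_0)$ one inductively picks $z_n$ with $f(z_n)=z_{n-1}$ and $z_n\in E_\Omega(p,\zeta,R_0\lambda_\zeta^{-n})$, the existence of such a preimage being obtained by a normal families argument on inverse branches of $f$ restricted to these horospheres, where the geometry is now comparable to that of the ball. To obtain $k_\Omega(z_n,z_{n+1})\to\log\lambda_\zeta$, conjugate by the hyperbolic automorphisms $\sigma_n\in\text{Aut}(\B^q)$ fixing $e_1$ with dilation $\lambda_\zeta^n$, mapping $E(e_1,R_0)$ onto $E(e_1,R_0\lambda_\zeta^{-n})$. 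The rescaled points $\sigma_n^{-1}(z_n)$ stay in a compact subset of $E(e_1,R_0)$; the metric comparison of Step 1 gives $k_\Omega(z_n,z_{n+1})=k_{\B^q}(\sigma_n^{-1}(z_n),\sigma_n^{-1}(z_{n+1}))+o(1)$; and any subsequential limit of $\sigma_n^{-1}\circ f\circ\sigma_n$ is a hyperbolic automorphism of $\B^q$ with repelling fixed point $e_1$ and dilation $\lambda_\zeta$, whence Julia-type estimates in the ball pin down the value $\log\lambda_\zeta$.

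The principal obstacle is the change of coordinates of the first step: it is not enough to know locally that $\Omega$ is close to $\B^q$: one needs a genuine element of $\text{Aut}(\C^q)$ that simultaneously traps a $\B^q$-horosphere inside $\Omega$ and sandwiches $\Omega$ inside $\B^q$ near $e_1$, so that the subsequent rescaling by ball automorphisms translates into statements about $f$ itself. The Fefferman step alone is only local and only achieves second-order agreement; the parabolic push and the Andersén--Lempert extension are precisely what upgrade it to a useful global automorphism, and both constructions rely essentially on the $C^4$-regularity of $\partial\Omega$.
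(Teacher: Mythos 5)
Your first step -- the coordinate change via Fefferman's normalisation, a parabolic push, and Andersén--Lempert jet interpolation, yielding an automorphism of $\C^q$ that traps a $\B^q$-horosphere inside $\Omega$ and keeps $\Omega$ inside $\B^q$ near $e_1$, together with the resulting comparison of $k_\Omega$ and $k_{\B^q}$ -- is essentially the paper's Proposition~\ref{changeofcoordinates}, and your account of it is accurate.

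The iterative construction of the backward orbit, however, has a genuine gap. You propose to ``inductively pick $z_n$ with $f(z_n)=z_{n-1}$ and $z_n\in E_\Omega(p,\zeta,R_0\lambda_\zeta^{-n})$, the existence of such a preimage being obtained by a normal families argument on inverse branches of $f$.'' But $f$ is an arbitrary holomorphic self-map -- it need not be injective, proper, or even open onto any horosphere -- so ``inverse branches'' of $f$ restricted to these sets do not exist, and it is exactly the existence of such preimages that constitutes the content of the theorem; asserting it is circular. The paper's construction goes in the opposite direction: one takes a point $\varphi_n(t_k)$ on the complex geodesic to $\zeta$ with $t_k\to1$, iterates \emph{forward} until the first time $m_{n,k}$ the orbit leaves a fixed reference horosphere $E_{\B^q}(z_n,e_1,1)$ (Lemma~\ref{birba2} guarantees it does leave), and then shows (Lemma~\ref{jojorabbit}) that for some $n$ the stopped points $f^{m_{n,k}}\circ\varphi_n(t_k)$ are relatively compact in $\Omega$. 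A diagonal argument then produces the backward orbit. The relative compactness is the crux, and it is established by the purely geometric rigidity statement Proposition~\ref{limits} about two sequences in $\B^q$ sandwiching a horosphere, \emph{not} by a dynamical renormalisation argument. Your substitute -- ``any subsequential limit of $\sigma_n^{-1}\circ f\circ\sigma_n$ is a hyperbolic automorphism of $\B^q$ with dilation $\lambda_\zeta$'' -- is both unjustified and false in general: such a limit, if it exists, is merely a holomorphic self-map of $\B^q$, and nothing forces it to be an automorphism (nor do you explain why the rescaled points should remain in a compact set, which is again the heart of the matter and is what Proposition~\ref{limits} supplies). In short: the preparatory localization is right, but the existence of the orbit is not proved, because the stopping-time construction and the ball-rigidity input (Proposition~\ref{limits}) are missing and replaced by assertions that either do not hold or beg the question.
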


As a consequence, the family of backward orbits with bounded step converging to $\zeta$ is non-empty and  consists of a unique equivalence class $[z_m]$. This, together with Theorem  \ref{backastratto}  gives the following.

\begin{teo}\label{finalebackward}
Let $\Omega\subset \C^q$ be a bounded strongly convex domain with $C^4$ boundary.
Let $f\colon \Omega\to \Omega$ be a  holomorphic self-map, and let $\zeta\in \partial \Omega$ be  a  boundary repelling fixed point.   
Then there exist
\begin{enumerate}
\item an integer $k$ such that $1\leq k\leq q$, 
\item a hyperbolic automorphism $\tau\colon \HH^k\to \HH^k$ of the form
\begin{equation}\label{backlomo1}
\tau(z_1,z')= \left(\frac{1}{\lambda_\zeta} z_1,\frac{e^{it_1}}{\sqrt \lambda_\zeta}z'_1,\dots, \frac{e^{it_{k-1}}}{\sqrt \lambda_\zeta}z'_{k-1} \right),
\end{equation}
where $t_j\in \R$ for $1\leq j\leq k-1$, 
\item a holomorphic mapping
$\ell\colon  \HH^k\to \Omega$,
\end{enumerate}
 such that the triple
 $(\HH^k,\ell,\tau)$ is a   pre-model for $f$ associated with $\zeta$  satisfying the following universal property: if  $(\Lambda,h,\varphi)$ is a pre-model associated with $\zeta$, then 
 there exists a unique morphism $\hat\eta\colon (\Lambda,h,\varphi)\to (\HH^k,\ell,\tau)$.
 
Moreover, the image $\ell(\HH^k)$ is equal to the stable subset $\mathcal{S}(\zeta)$, and
$$K\hbox{-}\lim_{z\to\infty }\ell(z)=\zeta.$$
\end{teo}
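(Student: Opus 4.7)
The plan is to reduce the statement to the abstract canonical pre-model construction (Theorem~\ref{backastratto}) by feeding it a suitable backward orbit, and then to identify the resulting data with the normal-form assertion. Since $\partial\Omega$ is $C^4$, Theorem~\ref{backward} supplies a bounded-step backward orbit $\beta=(z_m)$ converging to $\zeta$ with $s_1(\beta)=\log\lambda_\zeta$. Because $\Omega$ is strongly convex, hence strongly pseudoconvex with $C^2$ boundary, it is taut and $S_\Omega(z_m)\to 1$ by Theorem~\ref{deng}. Theorem~\ref{backastratto} then yields a canonical pre-model $(\B^k,\ell,\tau)$ associated with $[z_m]$, together with the asymptotic isometry property (\ref{backulula}) and the divergence-rate formula of point~(3) there.

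Next I would identify the normal form of $\tau$. The lemma preceding Theorem~\ref{finalebackward} forces $k\ge 1$: otherwise $f$ would be strongly elliptic with an interior fixed point $p$ and $[z_m]$ would consist only of the constant orbit at $p$, contradicting $z_m\to\zeta\in\partial\Omega$. Since $\ell\circ\tau^{-n}(x)=z_n\to\zeta$ and $\Omega$ is complete hyperbolic, the sequence $(\tau^{-n}(x))$ cannot be relatively compact in $\B^k$, so $\tau$ has no interior fixed point and is either parabolic or hyperbolic. Theorem~\ref{backastratto}(3) gives $c(\tau)\le s_1(\beta)=\log\lambda_\zeta$. The matching lower bound $c(\tau)\ge\log\lambda_\zeta$, which in particular rules out the parabolic case, is the core analytic input: my plan is to extract it from the iterative construction of $(z_m)$ in Theorem~\ref{backward}, which is built by modelling a ball-geodesic inside small $\B^q$-horospheres centered at $\zeta$, so that the near-coincidence of $k_{\B^q}$ and $k_\Omega$ on those horospheres transfers the linear growth $s_m(\beta)=m\log\lambda_\zeta+O(1)$ from the ball side to $\Omega$. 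Hence $\tau$ is hyperbolic with Denjoy--Wolff dilation $1/\lambda_\zeta$, and a standard change of holomorphic coordinates on $\B^k\cong\HH^k$ puts it in the form (\ref{backlomo1}).

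The geometric claims then fall out naturally. For $\ell(\HH^k)=\mathcal{S}(\zeta)$, the inclusion $\subset$ follows from Theorem~\ref{backastratto}(1), since every point of the image starts the bounded-step backward orbit $(\ell\circ\tau^{-m}(z))\in[z_m]$ converging to $\zeta$; conversely, given $p\in\mathcal{S}(\zeta)$, Theorem~\ref{backuniq} places any bounded-step backward orbit from $p$ converging to $\zeta$ in $[z_m]$, and Theorem~\ref{backastratto}(1) then yields $p\in\ell(\HH^k)$. For the universal property, any pre-model $(\Lambda,h,\varphi)$ associated with $\zeta$ produces the backward orbit $(h\circ\varphi^{-n}(x))$, which has constant $k_\Lambda$-step (as $\varphi$ is an isometric automorphism) hence bounded $k_\Omega$-step, and which converges to $\zeta$; by Theorem~\ref{backuniq} it belongs to $[z_m]$, making $(\Lambda,h,\varphi)$ a pre-model associated with $[z_m]$, and the universal property of the canonical pre-model from Theorem~\ref{backastratto} supplies the unique morphism $\hat\eta\colon(\Lambda,h,\varphi)\to(\HH^k,\ell,\tau)$. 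Finally, for the $K$-limit of $\ell$ at $\infty$, I would exploit the hyperbolic normal form of $\tau$: any $w_n\to\infty$ inside $K(\infty,M)\subset\HH^k$ can be written $w_n=\tau^{-\nu_n}(u_n)$ with $\nu_n\to\infty$ and $u_n$ in a fixed compact subset of $\HH^k$, so that $\ell(w_n)$ is the $\nu_n$-th term of a backward $f$-orbit starting in the bounded set $\ell(\{u_n\})\subset\mathcal{S}(\zeta)$; uniform convergence of such backward orbits to $\zeta$, which is the backward analog of \cite[Theorem~5.6]{ArBr}, forces $\ell(w_n)\to\zeta$.

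The main obstacle is the lower bound $c(\tau)\ge\log\lambda_\zeta$ in the second paragraph, which is exactly what distinguishes the hyperbolic from the parabolic case and fixes the dilation of $\tau$ at $1/\lambda_\zeta$. Unlike the forward setting of Proposition~\ref{ugualedil}, iterating Julia's Lemma along a backward orbit produces only the ``wrong-sided'' inequality $h_{\zeta,p}(z_m)\ge\lambda_\zeta^{-m}h_{\zeta,p}(z_0)$, which does not by itself yield the required linear lower bound on $k_\Omega(z_0,z_m)$; hence the argument has to use the fine horospheric estimates underlying the very construction of $(z_m)$ in Theorem~\ref{backward}.
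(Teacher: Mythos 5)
Your overall architecture matches the paper's: combine Theorem~\ref{backward} (existence) and Theorem~\ref{backuniq} (uniqueness) to get a single well-defined class $[z_m]$, feed it to Theorem~\ref{backastratto}, read off $c(\tau)$, and then treat the normal form, the stable subset, and the $K$-limit. The treatment of the stable subset, the universal property, and $k\ge 1$ is fine.

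The one genuine gap is the lower bound $c(\tau)\ge\log\lambda_\zeta$. You flag it as ``the main obstacle'' and propose to extract it from the internal horospheric mechanism of the proof of Theorem~\ref{backward}, expecting a linear estimate $s_m(\beta)=m\log\lambda_\zeta+O(1)$ to transfer from $k_{\B^q}$ to $k_\Omega$. That route is both overcomplicated and not clearly workable: the backward orbit constructed there arises as a diagonal limit of pieces of forward orbits stopped by horospheres, not as a sequence strung along a geodesic, so a uniform $O(1)$ correction across all $m$ does not come out of the construction. More importantly, your diagnosis of the difficulty is off: the obstruction you describe (Julia's Lemma giving only $h_{\zeta,p}(z_m)\ge\lambda_\zeta^{-m}h_{\zeta,p}(z_0)$, which is ``wrong-sided'') is a red herring because the bound does not come from Julia's Lemma at all. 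It comes directly from the definition of the dilation at the boundary regular fixed point $\zeta$: since $z_m=f(z_{m+1})$ and $z_{m+1}\to\zeta$, for any $p\in\Omega$,
\[
\liminf_{m\to\infty}\bigl[k_\Omega(p,z_{m+1})-k_\Omega(p,z_m)\bigr]
=\liminf_{m\to\infty}\bigl[k_\Omega(p,z_{m+1})-k_\Omega(p,f(z_{m+1}))\bigr]\ge \log\lambda_\zeta.
\]
Telescoping over $n$ consecutive steps and using the triangle inequality $k_\Omega(p,z_{m+n})-k_\Omega(p,z_m)\le k_\Omega(z_{m+n},z_m)$ yields $s_n(z_m)\ge n\log\lambda_\zeta$ for every $n$, hence $c(\tau)=\inf_n s_n(z_m)/n\ge\log\lambda_\zeta$. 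This works for \emph{any} bounded-step backward orbit converging to $\zeta$, with no reference to how it was constructed. Once that is in place, $c(\tau)=\log\lambda_\zeta>0$ forces $\tau$ to be hyperbolic with Denjoy--Wolff dilation $1/\lambda_\zeta$ (by Proposition~\ref{ugualedil} applied to $\tau$), which also gives $k\ge1$ without invoking the strongly-elliptic lemma, and the normal form follows by a linear change of coordinates on $\HH^k$.

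On the $K$-limit, the paper simply observes that $(\ell(\lambda^n i,0))$ is a backward $f$-orbit converging to $\zeta$ along a radial sequence in $\HH^k$ and invokes \cite[Theorem~5.6]{ArBr} directly; your sketch of a ``backward analog'' with a uniformity argument along compact families of backward orbits is heavier than needed and is not justified as stated.
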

\begin{proof}
By Theorems \ref{backward} and \ref{backuniq}, the family of backward orbits with bounded step converging to $\zeta$ is non-empty and  consists of a unique equivalence class $[z_m]$. Let $(\HH^k,\ell, \tau)$  be the canonical pre-model for $f$ associated with $[z_m]$. Notice that
 $$c(\tau)\leq s_1(z_m)=\log\lambda.$$
 To obtain the opposite inequality, notice that, if $p\in \Omega$,
 $$\log\lambda\leq \liminf_{m\to\infty}k_\Omega(p,z_{m+1})-k_\Omega(p,z_m).$$
Hence, if $n\geq 0$ is fixed, 
 $$n\log\lambda\leq \liminf_{m\to\infty}k_\Omega(p,z_{m+n})-k_\Omega(p,z_m)\leq \liminf_{m\to\infty}k_\Omega(z_{m+n},z_m)\leq s_n(z_m).$$ Thus $$c(\tau)=\inf_{n\in \N}\frac{s_n(z_m)}{n}\geq \log\lambda.$$
 Now we can change variables in $\HH^k$ to put $\tau$ in the form (\ref{backlomo1}).
 
We finally study the regularity at $\infty$ of the intertwining mapping $\ell$.
Consider the backward orbit with bounded step $((\lambda^ni,0))$ in $\HH^k$ for $\tau$.  
 Clearly  $((\lambda^ni,0))$ converges to $\infty$ and $(\ell(\lambda^ni,0))$ is a backward orbit for $f$ which converges to $\zeta\in\partial\Omega$. Then  \cite[Theorem 5.6]{ArBr}  yields $K\hbox{-}\lim_{z\to \infty}\ell(z)=\zeta$.

\end{proof}

\section{Uniqueness of backward orbits}
In this section we prove Theorem \ref{backuniq}. We remark that the $C^4$-smoothness of $\partial \Omega$ is only required in the proof of Theorem \ref{backward}), which is done in the next section. Here it will be sufficient to assume that the domain $\Omega$ has $C^3$ boundary.

Given a backward  orbit $(x_m)$ one can always assume that it is indexed by integers $m\in\mathbb Z$, by defining $x_{-m}:= f^{m}(x_0)$ for all $m\geq 0$. 
\begin{lemma}\label{trasl}
Let $\Omega\subset \C^q$ be a bounded strongly convex domain with $C^3$ boundary.
Let $f\colon \Omega\to \Omega$ be a  holomorphic self-map, and let $\zeta\in \partial \Omega$ be  a boundary repelling fixed point.   

Let $(x_m)$ and $(y_m)$ be two  backward orbits with bounded step, both converging to $\zeta$. Then $\lim_{m\to+\infty}k_{\Omega}(x_m,y_m)< \infty$ if and only if
\[
\lim_{n\to+\infty}\inf_{m\in\mathbb Z}k_{\Omega}(x_n,y_m)<\infty.
\]
\end{lemma}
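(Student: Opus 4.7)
The plan is to first check that both limits are well-defined. Since $f(x_{n+1})=x_n$, $f(y_{n+1})=y_n$ and $f$ is non-expansive for $k_\Omega$, the sequence $k_\Omega(x_n,y_n)$ is non-decreasing in $n$, so its limit exists in $[0,+\infty]$. Similarly, replacing $y_m$ by $f(y_m)=y_{m-1}$ yields $\inf_{m\in\mathbb{Z}}k_\Omega(x_{n+1},y_m)\geq\inf_{m\in\mathbb{Z}}k_\Omega(x_n,y_m)$, so this sequence is monotone too. The direction $\Longrightarrow$ is then trivial, because $\inf_{m\in\mathbb{Z}}k_\Omega(x_n,y_m)\leq k_\Omega(x_n,y_n)$.

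For the converse, assume $L:=\lim_{n\to+\infty}\inf_{m\in\mathbb{Z}}k_\Omega(x_n,y_m)<\infty$. For each sufficiently large $n$, pick $m(n)\in\mathbb{Z}$ with $k_\Omega(x_n,y_{m(n)})\leq L+1$ and set $d(n):=m(n)-n$. By the triangle inequality,
$$k_\Omega(x_n,y_n)\leq k_\Omega(x_n,y_{m(n)})+k_\Omega(y_{m(n)},y_n)\leq (L+1)+|d(n)|\cdot C,$$
where $C$ is a finite upper bound on $k_\Omega(y_j,y_{j+1})$ uniform in $j\in\mathbb{Z}$; such a $C$ exists because for $j\geq 0$ the bounded-step hypothesis gives $k_\Omega(y_j,y_{j+1})\leq s_1((y_m))$, while for $j<0$ contractivity of $f$ yields $k_\Omega(y_j,y_{j+1})\leq k_\Omega(y_0,f(y_0))$. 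Hence it is enough to prove that the integers $d(n)$ stay bounded.

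To this end, fix $n_0$ large. For every $n\geq n_0$, applying the non-expansive iterate $f^{n-n_0}$ to the pair $(x_n,y_{m(n)})$ gives
$$k_\Omega(x_{n_0},y_{n_0+d(n)})\leq k_\Omega(x_n,y_{m(n)})\leq L+1,$$
so every point $y_{n_0+d(n)}$ lies in the closed Kobayashi ball $\overline{B}(x_{n_0},L+1)$, which is relatively compact in $\Omega$ since $\Omega$ is complete hyperbolic. It therefore suffices to show that the index set $J:=\{j\in\mathbb{Z}:y_j\in\overline{B}(x_{n_0},L+1)\}$ is finite for $n_0$ large. As $j\to+\infty$, $y_j\to\zeta\in\partial\Omega$ leaves every compact subset of $\Omega$, so $J$ is bounded above. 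As $j\to-\infty$, $y_j=f^{-j}(y_0)$ is a forward orbit of $y_0$ under $f$; by the Denjoy--Wolff theorem on the strongly convex domain $\Omega$, either this forward orbit converges to a boundary point and hence eventually exits $\overline{B}$, or $f$ has an interior fixed point and the forward orbit lies in a compact set $K\subset\Omega$. In the latter case, completeness of the Kobayashi metric on $\Omega$ together with $x_{n_0}\to\zeta\in\partial\Omega$ forces $k_\Omega(x_{n_0},K)\to+\infty$, so for $n_0$ large one has $K\cap\overline{B}(x_{n_0},L+1)=\emptyset$, and no $y_j$ with $j<0$ lies in the ball. In all cases $J$ is finite, so $\{d(n):n\geq n_0\}$ is bounded.

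The main obstacle is this final step, namely ruling out the possibility that forward iterates of $y_0$ accumulate inside $\overline{B}(x_{n_0},L+1)$ when $f$ has an interior fixed point; this is settled by choosing $n_0$ large enough that the ball sits so close to $\zeta\in\partial\Omega$ that it avoids any prescribed compact subset of the interior of $\Omega$.
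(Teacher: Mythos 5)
Your proof is correct. The paper itself gives no argument for this lemma, simply citing \cite[Lemma 2]{ArGu}, and your reconstruction follows the expected route: monotonicity of both quantities by non-expansiveness of $f$, the triangle inequality with the uniform step bound $C$ to reduce the problem to boundedness of the index shift $d(n)$, then pushing the pair $(x_n,y_{m(n)})$ down to level $n_0$ so that the points $y_{n_0+d(n)}$ all lie in one compact Kobayashi ball, and finally using properness (completeness of $k_\Omega$) together with $y_j\to\zeta$ as $j\to+\infty$ and the Denjoy--Wolff / interior-fixed-point dichotomy as $j\to-\infty$ to conclude that only finitely many $y_j$ can lie in that ball. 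The case split at the end (forward orbit of $y_0$ either escapes to $\partial\Omega$ or stays in a compact set $K$ that a small enough ball near $\zeta$ misses) is exactly the point one has to handle, and you handle it correctly.
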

\begin{proof}
The proof is the same as in \cite[Lemma 2]{ArGu}.
\end{proof}

We recall some definitions and classical results (see e.g. \cite{pappa}). 
 \begin{definition}
 A metric space $(X,d)$ is {\sl geodesic} if any two points are joined by a geodesic segment.
\end{definition}
\begin{teo}[Generalized Hopf-Rinow]
If an inner metric space $(X,d)$ is complete and locally compact,  it is  geodesic.
\end{teo}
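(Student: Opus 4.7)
My plan is to prove the theorem in two stages: first establish that $(X,d)$ is proper — that closed bounded balls are compact — and second construct geodesic segments by iterated midpoint subdivision.

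For properness, fix a basepoint $x_0 \in X$ and set $R := \sup\{r \ge 0 : \overline{B}(x_0,r) \text{ is compact}\}$. Local compactness gives $R > 0$; I would argue by contradiction that $R = \infty$. Assuming $R < \infty$, the first step is to show $\overline{B}(x_0,R)$ is itself compact: for $(x_n)$ in this ball, the inner-metric hypothesis produces curves from $x_0$ to $x_n$ of length at most $R + 1/n$, and walking back along each to arc length $R - 1/n$ gives $y_n \in \overline{B}(x_0, R - 1/n)$ with $d(x_n,y_n) \le 2/n$; a convergent subsequence of $(y_n)$ (extracted from the compact smaller ball) together with completeness yields a convergent subsequence of $(x_n)$. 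The second step is to cover the compact set $\overline{B}(x_0,R)$ by finitely many relatively compact open balls $B(x_i,\rho_i)$ and, using the inner-metric hypothesis once more to connect each $y \in \overline{B}(x_0,R+\delta)$ to $x_0$ by a curve whose point at distance $R$ from $x_0$ lies in some $B(x_{i_0},\rho_{i_0}/2)$, show that $\overline{B}(x_0,R+\delta) \subset \bigcup_i B(x_i,\rho_i)$ for sufficiently small $\delta > 0$; hence $\overline{B}(x_0,R+\delta)$ is relatively compact, contradicting the definition of $R$.

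Properness at once yields the midpoint lemma: for $p,q \in X$ with $d(p,q) = L$, the inner-metric property gives approximate midpoints $m_n$ with $d(p,m_n), d(q,m_n) \le L/2 + 1/n$, all lying in the compact ball $\overline{B}(p, L)$; any limit point $m$ satisfies $d(p,m) = d(m,q) = L/2$.

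To construct a geodesic segment from $p$ to $q$, I would iterate the midpoint lemma on the dyadic rationals of $[0, L]$: set $\gamma(0) := p$, $\gamma(L) := q$, let $\gamma(L/2)$ be a midpoint of $p$ and $q$, and recurse on each subinterval. An induction shows $d(\gamma(s), \gamma(t)) = |s - t|$ on dyadics; $\gamma$ is $1$-Lipschitz on a dense subset of $[0,L]$ and extends by completeness to a unique continuous map on $[0,L]$, which is automatically an isometry by the triangle inequality together with $d(\gamma(0),\gamma(L)) = L$. The main obstacle is the properness argument, and specifically the enlargement step passing from compactness of $\overline{B}(x_0,R)$ to compactness of $\overline{B}(x_0,R+\delta)$; this is where the inner-metric hypothesis does its essential work, converting a distance estimate into a concrete curve that can be truncated near the compact ball.
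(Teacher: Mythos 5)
The paper does not prove this theorem: it is stated as a classical result (the Hopf--Rinow--Cohn--Vossen theorem) with a pointer to Coornaert--Delzant--Papadopoulos, so there is no internal proof to compare yours against. Your two-stage outline --- properness first, then geodesics by iterated midpoints --- is the standard textbook argument (see, e.g., Bridson--Haefliger or Burago--Burago--Ivanov), and the overall structure is sound. The midpoint lemma, the dyadic construction, the Lipschitz extension, and the enlargement step are all correct in substance.

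There is, however, a genuine gap in your first step of the properness argument. You write that the points $y_n$, obtained by walking along a near-geodesic from $x_0$ to $x_n$ up to arc length $R - 1/n$, admit a convergent subsequence ``extracted from the compact smaller ball.'' But $y_n \in \overline{B}(x_0, R - 1/n)$, and the radii $R - 1/n$ increase to $R$, so the $y_n$ do \emph{not} all lie in a single compact ball of radius strictly less than $R$; the only ball containing them all is $\overline{B}(x_0,R)$ itself, whose compactness is exactly what you are trying to prove. As written the extraction is circular. Two standard repairs: (i) replace the sequential argument by a total-boundedness one --- given $\varepsilon > 0$, a finite $\varepsilon/3$-net of the compact ball $\overline{B}(x_0, R - \varepsilon/3)$ is, by the curve-truncation estimate, an $\varepsilon$-net of $\overline{B}(x_0,R)$, so the latter is totally bounded and hence (being complete) compact; or (ii) keep the sequential framing but, for each fixed $m$, truncate the $n$-th curve at arc length $R - 1/m$ to get $y_n^{(m)} \in \overline{B}(x_0, R-1/m)$ with $d(x_n, y_n^{(m)}) \le 1/m + 1/n$, and then run a diagonal extraction over $m$ and $n$ to produce a Cauchy subsequence of $(x_n)$. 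Either fix is short, but the step as you stated it would not compile into a correct proof.

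Two smaller remarks. In the enlargement step you should make explicit that the finite cover of $\overline{B}(x_0,R)$ by relatively compact balls admits a Lebesgue number $\lambda>0$, and that $\delta$ must be taken small relative to $\lambda$ so that the truncation point lands within $\lambda$ of $\overline{B}(x_0,R)$; without naming this, the phrase ``lies in some $B(x_{i_0},\rho_{i_0}/2)$'' is an assertion rather than a consequence. And in the midpoint lemma the containment $m_n \in \overline{B}(p,L)$ only holds for $n$ large enough that $L/2 + 1/n \le L$; this is harmless but worth a word. With these repairs the proof is complete and is the standard one.
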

Hence, a complete Kobayashi hyperbolic manifold is geodesic.
\begin{definition}
Let $\delta>0$.
 A geodesic metric space $(X,d)$  is {\sl $\delta$-Gromov hyperbolic} if   every geodesic triangle is {\sl $\delta$-slim}, meaning that every side is contained in a $\delta$-neighborhood of the union of the other two sides. A geodesic metric space $(X,d)$  is {\sl Gromov hyperbolic} if it is  $\delta$-Gromov hyperbolic for some $\delta>0$.
\end{definition}

\begin{teo}[{\cite[Theorem 1.4]{BaBo}}]
Let $\Omega\subset \mathbb C^q$ be a bounded strongly pseudoconvex domain with $C^2$ boundary and $k_\Omega$ be its Kobayashi distance. Then $(\Omega,k_\Omega)$ is Gromov hyperbolic.
\end{teo}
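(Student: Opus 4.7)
The plan is to follow the strategy of Balogh and Bonk: produce sharp asymptotic estimates for the Kobayashi distance near $\partial\Omega$, compare with a tractable model metric that is visibly Gromov hyperbolic, and deduce Gromov hyperbolicity by invariance under rough isometry.

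First I would reduce the problem to near-boundary behaviour. Since $\Omega$ is bounded and $k_\Omega$ is complete and locally bi-Lipschitz to the Euclidean metric on any compact subset of $\Omega$, any violation of the thin-triangle condition must come from points close to the boundary. So the core task is a quantitative description of $k_\Omega(p,q)$ when $p,q$ are near $\partial\Omega$. Using the $C^2$ defining function $r$ and strong pseudoconvexity, a Fefferman-type local change of holomorphic coordinates near each boundary point $\xi$ reduces $\Omega$ to a $C^2$-small perturbation of the Siegel model $\{\mathrm{Im}\,w_1 > |w'|^2\}$, on which the Kobayashi metric is the explicit Bergman--Kobayashi metric of the ball. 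Patching these local comparisons, controlled by the positivity of the Levi form, and integrating the infinitesimal estimates along nearly optimal curves yields the global formula
\begin{equation*}
k_\Omega(p,q) \;=\; \tfrac{1}{2}\log\frac{g(p,q)^2}{\delta(p)\,\delta(q)} \;+\; O(1),
\end{equation*}
where $\delta(\cdot)=\mathrm{dist}(\cdot,\partial\Omega)$, $\pi(\cdot)$ denotes the normal projection to $\partial\Omega$, and $g$ is a horizontal displacement function built from the Carnot--Carath\'eodory metric $d_{CC}$ associated with the Levi distribution on $\partial\Omega$, of the form
\begin{equation*}
g(p,q)^2 \;\asymp\; d_{CC}\bigl(\pi(p),\pi(q)\bigr)^2 \;+\; \bigl|\bigl\langle \pi(p)-\pi(q),\,\nu(\pi(p))\bigr\rangle\bigr| \;+\; \delta(p) \;+\; \delta(q),
\end{equation*}
with $\nu$ the unit complex normal.

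With this estimate in hand, I would define the model pseudo-metric $\rho(p,q):=\tfrac{1}{2}\log(g(p,q)^2/(\delta(p)\delta(q)))$ and verify the four-point Gromov condition directly on $\rho$. Choosing a base point $o\in\Omega$, one computes that the Gromov product $(p|q)_o$ equals $-\log g(p,q)+O(1)$, so the Gromov four-point inequality $(p|q)_o\ge\min\{(p|z)_o,(z|q)_o\}-\delta$ reduces to the quasi-ultrametric inequality $g(p,q)\le C\max\{g(p,z),g(z,q)\}$ for points at comparable heights; this last inequality follows from the triangle inequality for $d_{CC}$ together with elementary H\"older-type bounds controlling the interplay between the horizontal and the normal directions. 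The displayed asymptotic then shows that the identity map is a rough isometry between $(\Omega,k_\Omega)$ and $(\Omega,\rho)$; since $(\Omega,k_\Omega)$ is complete (by the boundedness of $\Omega$) and locally compact, Hopf--Rinow makes it geodesic, and Gromov hyperbolicity is invariant under rough isometry between geodesic spaces, so $(\Omega,k_\Omega)$ is Gromov hyperbolic.

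The main obstacle is the sharp asymptotic formula: upgrading the pointwise Fefferman-type normal form to a bound that is uniform in both arguments $p,q$ and additive (not merely multiplicative) requires careful tracking of the Kobayashi indicatrix near $\partial\Omega$ and an argument that nearly optimal disks connecting $p$ and $q$ can be forced to follow the projected Carnot--Carath\'eodory geodesic between $\pi(p)$ and $\pi(q)$. This is where strong pseudoconvexity is used essentially: in the merely pseudoconvex setting the horizontal function $g$ degenerates and the formula, together with Gromov hyperbolicity, is known to fail in general.
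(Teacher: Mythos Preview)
The paper does not prove this statement at all; it is quoted verbatim from Balogh--Bonk \cite{BaBo} as a background fact, with no argument given. Your outline is a faithful sketch of the original Balogh--Bonk proof: the sharp additive estimate $k_\Omega(p,q)=2\log\bigl[(d_H(\pi(p),\pi(q))+h(p)\vee h(q))/\sqrt{h(p)h(q)}\bigr]+O(1)$ with $h=\sqrt{\delta}$, followed by checking the Gromov four-point inequality for the resulting model metric and invoking invariance of Gromov hyperbolicity under rough isometry between geodesic spaces.

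One small remark on your displayed formula for $g^2$: in Balogh--Bonk the Carnot--Carath\'eodory distance $d_H$ on $\partial\Omega$ already encodes the anisotropy between the Levi-horizontal directions and the Reeb (complex-normal) direction, so the separate term $|\langle\pi(p)-\pi(q),\nu\rangle|$ you include is essentially absorbed into $d_{CC}^2$ and is redundant. Also, with the usual normalization of $k_\Omega$ the leading coefficient is $2$ rather than $\tfrac12$; this does not affect the argument but is worth getting right if you write it out.
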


\begin{prop}\label{filippo}
Let $(X,d)$ be a   geodesic $\delta$-Gromov hyperbolic metric space.
If $\gamma$ is a geodesic line,  $x_0\in  \gamma$, $z\in X$, and if  $z_\gamma$ denotes a point in $ \gamma$ such that 
$d(z,z_\gamma)=d(z,\gamma)$, then 
$$  d(x_0,z)\geq  d(x_0,z_\gamma)+d(z_\gamma,z)-6\delta.$$         
\end{prop}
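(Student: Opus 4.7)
The plan is to apply $\delta$-slimness to the geodesic triangle with vertices $x_0$, $z_\gamma$, $z$, and to use the minimizing property of $z_\gamma$ to force the transition between the two relevant $\delta$-neighborhoods on the side $[z_\gamma,z]$ to happen essentially at $z_\gamma$ itself. This will yield a point of $[x_0,z]$ within $2\delta$ of $z_\gamma$, from which the conclusion is one application of the triangle inequality away.

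More precisely, I would consider the geodesic triangle whose sides are $[x_0, z_\gamma]\subset \gamma$, $[z_\gamma, z]$, and $[x_0, z]$. By $\delta$-slimness, every point of $[z_\gamma, z]$ lies within $\delta$ of $[x_0, z_\gamma]\cup [x_0, z]$. Parametrizing $[z_\gamma,z]$ by arc length as $\sigma\colon [0,D]\to X$ with $\sigma(0)=z_\gamma$ and $\sigma(D)=z$, where $D:=d(z_\gamma,z)$, I would let $t^*$ be the supremum of those $t\in [0,D]$ for which $\sigma(t)$ lies within $\delta$ of $[x_0, z_\gamma]$. Closedness of the set of such $t$ gives $\sigma(t^*)$ within $\delta$ of some $m'\in [x_0,z_\gamma]\subset \gamma$, and a compactness argument applied to a sequence $t_n\downarrow t^*$ (for each of which $\sigma(t_n)$ is within $\delta$ of $[x_0,z]$) produces $m_+\in [x_0,z]$ with $d(\sigma(t^*),m_+)\leq \delta$. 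Setting $m:=\sigma(t^*)$, we thus have a point of $[z_\gamma,z]$ that is simultaneously $\delta$-close to both $m'\in \gamma$ and $m_+\in [x_0,z]$.

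The heart of the proof is the following short calculation, which is where the minimality of the projection $z_\gamma$ enters. Since $m'\in \gamma$, the minimizing property gives $d(z,z_\gamma)\leq d(z,m')$. Combined with the triangle inequality
\[
d(z,m')\leq d(z,m)+d(m,m')\leq (D-t^*)+\delta,
\]
we obtain $D\leq D-t^*+\delta$, that is, $t^*\leq \delta$. Hence $d(m,z_\gamma)=t^*\leq \delta$, and consequently $d(m_+,z_\gamma)\leq d(m_+,m)+d(m,z_\gamma)\leq 2\delta$.

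Once we have $m_+\in [x_0,z]$ with $d(m_+,z_\gamma)\leq 2\delta$, the conclusion follows by splitting $[x_0,z]$ at $m_+$ and using the triangle inequality twice:
\[
d(x_0,z)=d(x_0,m_+)+d(m_+,z)\geq \bigl(d(x_0,z_\gamma)-2\delta\bigr)+\bigl(d(z_\gamma,z)-2\delta\bigr)=d(x_0,z_\gamma)+d(z_\gamma,z)-4\delta,
\]
which is already stronger than the claimed bound with $6\delta$. The only anticipated obstacles are cleanly setting up the limiting/compactness argument that produces $m_+$, and handling separately the degenerate case $t^*=D$ (in which $z$ itself would be within $\delta$ of $\gamma$, forcing $D\leq \delta$ and making the inequality essentially trivial). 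The substantive content of the argument is concentrated in the short minimality computation of the third paragraph.
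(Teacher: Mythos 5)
Your proof is correct, and it takes a genuinely different route from the paper's. The paper's argument is a ``one-shot'' construction: it first disposes of the case $d(z_\gamma,z)\leq 3\delta$ by the triangle inequality, and then (when $d(z_\gamma,z)>3\delta$) fixes the specific point $x\in[z_\gamma,z]$ with $d(x,z_\gamma)=2\delta$, observes that $d(x,\gamma)=2\delta>\delta$ (this is where the minimizing property of $z_\gamma$ enters), so the point $y$ produced by $\delta$-slimness must lie on $[x_0,z]$ and satisfies $d(y,z_\gamma)<3\delta$; splitting $[x_0,z]$ at $y$ then gives the bound with $6\delta$. Your argument instead locates the transition point $t^*$ where the $\delta$-neighborhood changes sides along $[z_\gamma,z]$, and uses the minimizing property of $z_\gamma$ directly to show $t^*\leq\delta$, which after the compactness step gives a point of $[x_0,z]$ within $2\delta$ of $z_\gamma$ and hence the sharper bound $d(x_0,z)\geq d(x_0,z_\gamma)+d(z_\gamma,z)-4\delta$. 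What the paper's approach buys is simplicity: no continuity/compactness bookkeeping, no separate $t^*=D$ case, at the modest price of a worse constant (which is irrelevant for the paper's application in Lemma \ref{gromov}, where the constant is swallowed into the amplitude). Your approach buys the tighter constant, at the cost of the limiting argument needed to produce $m_+$; both the closedness of $\{t: d(\sigma(t),[x_0,z_\gamma])\leq\delta\}$ and the compactness of the geodesic segment $[x_0,z]$ that you invoke are correct, since geodesic segments are isometric images of compact intervals and distance-to-a-set is $1$-Lipschitz. Both proofs are valid for the proposition as stated.
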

\begin{proof}
If $d(z_\gamma, z)\leq 3\delta$, the result follows from the triangular inequality.
Assume thus that $d(z_\gamma, z)> 3\delta$.  
Let $(x_0, z_\gamma)$ denote the portion of $\gamma$ between $x_0$ and $z_\gamma$.
Let $(z_\gamma, z)$ be a geodesic segment connecting $z_\gamma$ to $z$, and let 
$(x_0,z)$ be a geodesic segment connecting $x_0$ to $z$. Let $x$ be a point in $(z_\gamma, z)$ such that 
 $d(x,z_\gamma)=2\delta$. Since every geodesic triangle is $\delta$-slim, there exists a point $y$ in 
$(x_0, z_\gamma)\cup (x_0,z)$ such that $d(y,x)<\delta.$ From $d(x,\gamma)=2\delta$ it  follows that  $y\in (x_0, z)$, and from the triangle inequality we have $d(y,z_\gamma)<3\delta.$ Using twice more the triangular inequality we obtain
$$d(x_0,y)\geq d(x_0,z_\gamma)-d(y,z_\gamma),$$
$$d(y,z)\geq d(z,z_\gamma)-d(y,z_\gamma).$$
Summing the two inequalities yields the result.
\end{proof}

\begin{definition}
Let $\Omega\subset \C^q$ be a bounded strongly convex domain with $C^3$ boundary. Let $p\in \Omega$, $\zeta\in \partial \Omega$ and let $\gamma\subset \Omega$ be  the geodesic ray connecting $p$ to $\zeta$.  Given $M>1$ we denote $$A(\gamma,M):=\left\{ z\in \Omega\colon  k_\Omega(z,\gamma)<{\log M}\right\}.$$
\end{definition}

We now show that the Koranyi regions are comparable to the regions $A(\gamma,M)$. Let $\delta>0$ be such that $(\Omega,k_\Omega)$ is $\delta$-Gromov hyperbolic.
\begin{lemma}\label{gromov}
Let $\Omega\subset \C^q$ be a bounded strongly convex domain with $C^3$ boundary. Let $\zeta\in \partial \Omega$, $p\in \Omega$ and let $\gamma\subset \Omega$ be  the geodesic ray connecting $p$ to $\zeta$.
Then  for every $M>1$, 
 $$A(\gamma,M)\subset K_\Omega(p,\zeta,M)\subset A(\gamma,Me^{6\delta}).$$
\end{lemma}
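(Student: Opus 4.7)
My plan is to parametrize the geodesic ray as $\gamma\colon [0,\infty)\to\Omega$ by arclength with $\gamma(0)=p$, so that $k_\Omega(p,\gamma(t))=t$ and $\gamma(t)\to\zeta$. A preliminary observation is that since $(\Omega,k_\Omega)$ is complete and $k_\Omega(z,\gamma(t))\geq t-k_\Omega(p,z)\to\infty$, the distance $k_\Omega(z,\gamma)$ is actually attained at some projection $\gamma(s)$. Moreover, by evaluating the limit in \eqref{limiteesiste} along the particular curve $w=\gamma(t)$, I get the key identity
$$\log h_{\zeta,p}(z)=\lim_{t\to\infty}[k_\Omega(z,\gamma(t))-t].$$

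For the easy inclusion $A(\gamma,M)\subset K_\Omega(p,\zeta,M)$, I would apply the triangle inequality twice to obtain the upper bounds $k_\Omega(p,z)\leq s+k_\Omega(z,\gamma)$ and, for $t\geq s$, $k_\Omega(z,\gamma(t))-t\leq k_\Omega(z,\gamma)-s$. Passing to the limit in the second bound yields $\log h_{\zeta,p}(z)\leq k_\Omega(z,\gamma)-s$, and summing with the first gives $\log h_{\zeta,p}(z)+k_\Omega(p,z)\leq 2k_\Omega(z,\gamma)<2\log M$, which is precisely the defining condition of $K_\Omega(p,\zeta,M)$.

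For the reverse inclusion $K_\Omega(p,\zeta,M)\subset A(\gamma,Me^{6\delta})$, the Gromov-hyperbolic estimate of Proposition \ref{filippo} is essential. I would apply it twice to the same projection $\gamma(s)$: first with base point $x_0=p=\gamma(0)$, which gives $k_\Omega(p,z)\geq s+k_\Omega(z,\gamma)-6\delta$; and then with base point $x_0=\gamma(t)$ for $t>s$, which gives $k_\Omega(z,\gamma(t))\geq (t-s)+k_\Omega(z,\gamma)-6\delta$, and hence, after letting $t\to\infty$, $\log h_{\zeta,p}(z)\geq -s+k_\Omega(z,\gamma)-6\delta$. Summing these two lower bounds and using the hypothesis $\log h_{\zeta,p}(z)+k_\Omega(p,z)<2\log M$ yields $2k_\Omega(z,\gamma)-12\delta<2\log M$, hence $k_\Omega(z,\gamma)<\log M+6\delta=\log(Me^{6\delta})$.

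The only point that needs scrutiny is that Proposition \ref{filippo} is phrased for geodesic \emph{lines}, whereas our $\gamma$ is a ray. However, inspection of its proof shows that only the sub-segment of $\gamma$ between $x_0$ and the projection $\gamma(s)$ enters the argument, and this segment lies in the ray whenever both endpoints $x_0$ and $\gamma(s)$ do — which is exactly the situation in both of my applications. Thus the estimate transfers verbatim and the constants $6\delta$ in the lemma statement are accounted for precisely by the two applications of Proposition \ref{filippo}.
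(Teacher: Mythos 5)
Your proof is correct, and it is essentially the same Gromov-hyperbolicity argument as in the paper: both rely on the two applications of Proposition~\ref{filippo} and on the identity $\log h_{\zeta,p}(z)=\lim_{t\to\infty}\bigl[k_\Omega(z,\gamma(t))-t\bigr]$, which the paper uses implicitly by letting $w\to\zeta$ along $\gamma$. The only real difference is organizational: the paper projects $z$ onto the full geodesic \emph{line} $\tilde\gamma$ (so that Proposition~\ref{filippo} applies verbatim) and then handles a dichotomy according to whether the foot-point lies on the ray $\gamma$ or on the backward prolongation; you instead project directly onto the ray and observe that Proposition~\ref{filippo} still applies when $x_0$ and $z_\gamma$ are both on the ray, because its proof only ever uses the segment of $\gamma$ between them (the estimate $d(x,\gamma)=2\delta$ and the exclusion of $y$ from $(x_0,z_\gamma)$ hold just as well for a ray). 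Your justification of this point is correct, and it has the benefit of eliminating the paper's case distinction; in exchange, you carry the slight extra burden of noting that the projection onto the ray is attained (which you handle correctly via properness of $\gamma$). Both routes deliver exactly the constant $6\delta$.
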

\begin{proof}
Let $z\in A(\gamma,M)$, and let $y\in \gamma$ be a point such that $k_{\Omega}(z,y)< \log M$.
Let $w\in \gamma$ close enough to $\zeta$. Then
$$ k_{\Omega}(p,z)+k_{\Omega}(z,w)-k_{\Omega}(p,w)\leq k_{\Omega} (p,y)+ k_{\Omega}(y,z)+ k_{\Omega}(z,y)+ k_{\Omega}(y,w)-k_{\Omega}(p,w)=2\,k(y,z).$$
Letting $w$ go to $\zeta$ on the geodesic ray $\gamma$, we obtain that $$\log h_{\zeta,p}(z)+k_\Omega(z,p)\leq 2\,k(y,z)< 2\log M,$$ and therefore that $z\in K_\Omega(p,\zeta,M)$.

Conversely, let $z\in K_\Omega(p,\zeta,M)$. Denote by $\tilde \gamma$ the geodesic line containing $\gamma$ and  let $y\in \tilde\gamma$ be the closest point to $z$. Let $w\in \gamma$.
Applying  Proposition  \ref{filippo} twice we obtain
\begin{equation}\label{rivia}
k_{\Omega}(p,z)+ k_{\Omega}(z,w)-k_{\Omega}(p,w)\geq k_{\Omega}(p,y)+k_{\Omega}(y,w)+2k_{\Omega}(y,z)-12\delta-k_{\Omega}(p,w).
\end{equation}
We now have two cases. If $y\in\gamma$, from \eqref{rivia} we get    
$$k_{\Omega}(p,z)+ k_{\Omega}(z,w)-k_{\Omega}(p,w)\geq2k_{\Omega}(y,z)-12\delta.$$
If  $y\not\in \gamma$, then  $ k_{\Omega}(y,w)-k_{\Omega}(p,w)=k (y,p)$, and thus from \eqref{rivia} we get
$$k_{\Omega}(p,z)+ k_{\Omega}(z,w)-k_{\Omega}(p,w)\geq 2k_{\Omega}(p,y)+2k_{\Omega}(y,z)-12\delta\geq 2k_\Omega(z,p)-12\delta.$$

 In both cases, letting $w$ go to $\zeta$ on the geodesic ray $\gamma$, we obtain that $$k_{\Omega}(z,\gamma)<\log M +6\delta,$$ and thus that $z\in A(\gamma,Me^{6\delta})$.
\end{proof}

\begin{lemma}\label{entrakor}
Let $\Omega\subset \C^q$ be a bounded strongly convex domain with $C^3$ boundary.
Let $f\colon \Omega\to \Omega$ be a  holomorphic self-map, and let $\zeta$ be  a boundary repelling fixed point.
Let $(z_m)$ be a backward orbit with bounded step converging to $\zeta$. Then for every $p\in\Omega$ there exists $M>1$ so that
$$
z_m\in K_\Omega(p,\zeta,M),\qquad\forall m\ge 0.
$$
\end{lemma}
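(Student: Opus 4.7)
The plan is to reduce the problem, via Lemma \ref{gromov}, to showing that the backward orbit $(z_m)$ stays within bounded Kobayashi distance from the geodesic ray $\gamma$ connecting $p$ to $\zeta$, which exists by Theorem \ref{complexgeo}. Indeed, Lemma \ref{gromov} provides the inclusion $A(\gamma,M)\subset K_\Omega(p,\zeta,M)$, so it suffices to exhibit a constant $D>0$ with $k_\Omega(z_m,\gamma)\le\log D$ for every $m\ge 0$.

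The first ingredient is Julia's lemma in the backward direction. Because $f(z_{m+1})=z_m$, Proposition \ref{julialem} contraposes to $h_{\zeta,p}(z_{m+1})\ge h_{\zeta,p}(z_m)/\lambda_\zeta$, and iteration gives
\[
\log h_{\zeta,p}(z_m)\ge \log h_{\zeta,p}(z_0)-m\log\lambda_\zeta.
\]
Combined with the elementary Lipschitz-type estimate $k_\Omega(p,z)\ge -\log h_{\zeta,p}(z)$, which follows directly from triangle inequality in the limit defining $h_{\zeta,p}$, this yields the lower bound
\[
k_\Omega(p,z_m)\ge m\log\lambda_\zeta + O(1).
\]
The second ingredient is the bounded step $s:=\sup_m k_\Omega(z_m,z_{m+1})<\infty$, which gives the trivial upper bound $k_\Omega(z_m,z_n)\le|n-m|s$. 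In particular, these two bounds force the Gromov product between consecutive orbit points to grow:
\[
2(z_m|z_{m+1})_p=k_\Omega(p,z_m)+k_\Omega(p,z_{m+1})-k_\Omega(z_m,z_{m+1})\ge (2m+1)\log\lambda_\zeta - s + O(1)\longrightarrow \infty.
\]

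The main step of the proof uses the Gromov hyperbolicity of $(\Omega,k_\Omega)$ provided by \cite{BaBo}. Iterating the slim-triangle inequality $(x|z)_p\ge\min((x|y)_p,(y|z)_p)-\delta$ dyadically along the orbit yields
\[
(z_m|z_n)_p\ge \min_{m\le j<n}(z_j|z_{j+1})_p - \delta\lceil\log_2(n-m)\rceil,
\]
so the lower bound on the consecutive Gromov products propagates to all pairs, showing that $(z_m)$ converges to $\zeta$ in the Gromov boundary. Consequently, the real geodesic segments $[p,z_m]\subset\Omega$ (unique by Theorem \ref{complexgeo}) fellow-travel with the geodesic ray $\gamma$ on their entire length, up to a bounded Hausdorff distance depending only on $\delta$; this is a standard consequence of the slim ideal triangle with vertices $p,z_m,\zeta$ in a proper geodesic Gromov hyperbolic space. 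Taking the endpoint gives $k_\Omega(z_m,\gamma)\le D$ uniformly in $m$, and Lemma \ref{gromov} supplies the desired $M$.

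The hardest step is propagating the Gromov-product estimate from consecutive pairs to arbitrary pairs, because the naive chain-wise application of $\delta$-hyperbolicity loses a factor linear in $|n-m|$. The dyadic refinement above is what reduces this loss to a logarithmic term, which is then absorbed by the linear growth $m\log\lambda_\zeta$ coming from the Julia estimate in Step 2. A secondary technical point is ensuring that the fellow-traveling constant $D$ depends only on the hyperbolicity constant $\delta$ and not on the particular orbit, which requires Gromov (not merely Euclidean) convergence of $(z_m)$ to $\zeta$; this is precisely what the bounded step hypothesis, combined with Julia, delivers.
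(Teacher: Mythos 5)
Your reduction to Lemma \ref{gromov} is the right first move, but the two main steps have genuine errors.

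\textbf{Step 2 (Julia's lemma) goes the wrong way.} Proposition \ref{julialem} gives $h_{\zeta,p}(f(z))\le\lambda_\zeta\, h_{\zeta,p}(z)$, so for a backward orbit $z_m=f(z_{m+1})$ one gets the \emph{lower} bound $\log h_{\zeta,p}(z_m)\ge\log h_{\zeta,p}(z_0)-m\log\lambda_\zeta$, exactly as you wrote. But combining a lower bound on $\log h_{\zeta,p}(z_m)$ with $k_\Omega(p,z)\ge-\log h_{\zeta,p}(z)$ gives nothing: you have $k_\Omega(p,z_m)\ge-\log h_{\zeta,p}(z_m)$ where the right-hand side is bounded \emph{above} by $m\log\lambda_\zeta+O(1)$, so no lower bound on $k_\Omega(p,z_m)$ follows. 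The correct source of linear growth — and the one the paper actually uses — is the definition of the dilation itself: since $z_{m+1}\to\zeta$, $\liminf_{m}[k_\Omega(p,z_{m+1})-k_\Omega(p,z_m)]\ge\log\lambda_\zeta$, and summing these increments yields $k_\Omega(p,z_m)\ge m\log\lambda_\zeta+O(1)$ (equivalently, $e^{-k_\Omega(p,z_m)}\le c^m\cdot O(1)$ for some $\lambda_\zeta^{-1}<c<1$, which is how the paper formulates it).

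\textbf{The dyadic Gromov-product propagation does not close.} You correctly observe that $(z_m|z_n)_p\ge\min_{m\le j<n}(z_j|z_{j+1})_p-\delta\lceil\log_2(n-m)\rceil$, but the claim that the logarithmic loss ``is absorbed by the linear growth $m\log\lambda_\zeta$'' is false: the loss depends on $n-m$, not on $m$, so for fixed $m$ and $n\to\infty$ the bound tends to $-\infty$. As a result you cannot conclude $\liminf_n(z_m|z_n)_p\ge k_\Omega(p,z_m)-C$ (which, up to $O(\delta)$, is what $z_m\in K_\Omega(p,\zeta,M)$ amounts to). Relatedly, the fellow-traveling claim via slim ideal triangles $(p,z_m,\zeta)$ does not give what you assert: $\delta$-thinness places $[p,z_m]$ near $[p,\zeta]\cup[z_m,\zeta]$, and points near the endpoint $z_m$ of $[p,z_m]$ are near $[z_m,\zeta]$, not near $\gamma=[p,\zeta]$; the endpoint estimate $k_\Omega(z_m,\gamma)\le D$ does not drop out. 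The missing ingredient is a stability-of-quasi-geodesics (Morse lemma) argument: the exponential escape forces the concatenation of the segments $[z_m,z_{m+1}]$ to be a quasi-geodesic ray ending at $\zeta$, and the Morse lemma (not the iterated Gromov-product inequality) bounds its Hausdorff distance to $\gamma$ by a constant independent of length. This is essentially what is delegated in the paper's proof to \cite[Errata Corrige--Lemma 2.16]{AbRa} after establishing exponential decay of $e^{-k_\Omega(p,z_n)}$.
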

\begin{proof}
Let $p\in \Omega$.
By the definition of the dilation $\lambda_\zeta$ we have
$$
\liminf_{n\to\infty}\left( k_{\Omega}(p,z_{n+1})- k_{\Omega}(p,z_{n})\right)\ge \log \lambda_\zeta.
$$
For all $n\geq 0$ define $s_n$ by $-\log s_n:=k_\Omega(p,z_n).$
It follows that there exist $ \lambda_\zeta^{-1}<c<1$ and $n_0\geq 0$ such that 
$s_{n+1}\leq c s_n$ for all $n\geq n_0$. Up to shifting the sequence $(z_n)$ we may thus assume that 
$$s_{n+k}\leq c^{k}s_n, \quad \forall n\geq 0, k\geq 0.$$
The proof now follows as in \cite[Errata Corrige--Lemma 2.16]{AbRa}.

\end{proof}

We are now ready to prove Theorem \ref{backuniq}.
\begin{proof}[Proof of Theorem \ref{backuniq}]
Let  $(x_m)_{m\in \mathbb{Z}}$ and $(y_m)_{m\in \mathbb{Z}}$ be two  backward orbits with bounded step, both converging to the boundary repelling fixed point $\zeta\in \partial \Omega$. Let $p\in\Omega$ and $\gamma$ be the geodesic ray starting at $p$ and ending in $\zeta$. By Lemma \ref{entrakor} the sequence  $(x_m)_{m\in\mathbb{N}}$ is contained in a Koranyi region $K_\Omega(p,\zeta, M)$ for some $M>1$, and thus by Lemma \ref{gromov} it is contained in the region $A(\gamma,Me^{6\delta})$. We claim that there exists $R>0$ such that 
$$A(\gamma,Me^{6\delta})\subset \left\{ z\in \Omega \colon \inf_{m\in\mathbb{Z}} k_\Omega(z,y_m)<R\right\}.$$
Once the claim is proved, the result follows by Lemma \ref{trasl}.

It is enough to show that there exists a constant $R'>0$ such that for all $w\in \gamma$, we have $\inf_{m\in \mathbb{Z}} k_{\Omega}(w,y_m)<R'$.
Since  $(y_m)_{m\in \mathbb{N}}$  is also contained in a Koranyi region $K_\Omega(p,\zeta,M')$, if we write $C=6\delta+\log M'$ it follows that
$k_{\Omega}(y_m, \gamma)< C$ for all $m\in \mathbb{N}$.
Let $a_m$ be a point in $\gamma$ such that $k_{\Omega}(y_m,a_m)<C$. Clearly $a_m\to \zeta$. Let $w$ be a point in the portion of $\gamma$ which connects $a_0$ to $\zeta$. Then there exists $m(w)$ such that $w$ belongs to the portion of $\gamma$ which connects $a_{m(w)}$ to $a_{m(w)+1}$.
Hence $$\inf_m k_{\Omega}(w, y_m)\leq C+k_{\Omega}(a_{m(w)},a_{m(w)+1})\leq C+2C+ k_{\Omega}(y_{m(w)},y_{m(w)+1})\leq 3C+ \sigma(y_m).$$
Letting $R'=3C+\sigma(y_m)$ we obtain that $\inf_{m\in \mathbb{Z}} k_{\Omega}(w,y_m)<R'$ holds for every $w\in\gamma$ sufficiently close to $\zeta$. By taking a bigger $R'$ if necessary, we may therefore assume that the inequality holds for all $w\in\gamma$, concluding the proof of the theorem.
\end{proof}

\section{Existence of backward orbits}
In this section we prove Theorem \ref{backward}. In the case of the ball $\B^q$ this result was  first proved  in  \cite{O}  assuming that the boundary repelling fixed point is isolated, and then for a general boundary repelling fixed point in \cite{ArGu}. 
For strongly convex domains with $C^3$ boundary, it was proved in \cite{AbRa} in the case of an isolated boundary repelling fixed point.

\subsection{Preparatory results in the ball}

In the following result we reformulate the crucial part of the proof of \cite[Theorem 2]{ArGu} as a purely geometric statement.
\begin{prop}\label{limits}
Let $(x_n),(y_n)\in \B^q$ be two sequences satisfying
\begin{equation}\label{allineano}
\lim_{n\to\infty}[k_{\B^q}(0,x_n)-k_{\B^q}(0,y_n)]=\lim_{n\to\infty}k_{\B^q}(x_n,y_n)=L>0.
\end{equation}
Suppose further that there exists $R>0$ and $\zeta\in \partial \B^q$ so that $x_n\in E_{\B^q}(0,\zeta,R)$ and $y_n\not\in E_{\B^q}(0,\zeta,R)$ for every $n\in\mathbb N$. Then $(x_n)$ and $(y_n)$ are relatively compact in $\B^q$.
\end{prop}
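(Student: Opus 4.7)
The strategy is contradiction. Since $k_{\B^q}(x_n,y_n)\to L<\infty$, relative compactness of $(x_n)$ and $(y_n)$ in $\B^q$ is equivalent, so I may assume, after extraction, that $x_n\to\xi\in\partial\B^q$. The Euclidean closure of the horosphere $E(0,\zeta,R)$ is a Euclidean ball internally tangent to $\partial\B^q$ only at $\zeta$, so $x_n\in E(0,\zeta,R)$ forces $\xi=\zeta$. The triangle inequality applied inside the definition of the horofunction yields
\[
h_{\zeta,0}(y_n)\;\le\;e^{k_{\B^q}(x_n,y_n)}\,h_{\zeta,0}(x_n)\;\le\;R\,e^{L+o(1)},
\]
so $(y_n)$ stays in a slightly larger horosphere at $\zeta$. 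Since $k_{\B^q}(0,y_n)\to\infty$ from the first identity in \eqref{allineano}, $(y_n)$ is not relatively compact, and combined with the horosphere confinement this forces $y_n\to\zeta$ with $R\le h_{\zeta,0}(y_n)\le Re^{L+o(1)}$.

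The plan is now to derive a contradiction by showing $h_{\zeta,0}(y_n)\to 0$. I would pass to the Siegel half-space $\HH^q$ via a Cayley transform sending $\zeta$ to $\infty$ and $0$ to the base point $(i,0)$. In these coordinates the horofunction reads $h_\infty(z_1,z')=(\Im z_1-\|z'\|^{2})^{-1}$, and the two horosphere conditions become the uniform bounds
\[
\Im(x_n)_1-\|(x_n)'\|^{2}>1/R,\qquad \Im(y_n)_1-\|(y_n)'\|^{2}\in[1/(Re^{L+o(1)}),1/R],
\]
while both sequences tend to $\infty$ in $\HH^q$. Using the explicit formula for $k_{\HH^q}$ pulled back from $\B^q$, together with $k_{\B^q}(0,x_n)-k_{\B^q}(0,y_n)\to L$ and $k_{\B^q}(x_n,y_n)\to L$, a direct calculation gives sharp asymptotic controls on the Euclidean displacement $(x_n)_1-\overline{(y_n)_1}$, on the Heisenberg-type bilinear interaction $(x_n)'\cdot\overline{(y_n)'}$ and on the heights. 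Substituted back into $h_\infty(y_n)$, these controls force $h_\infty(y_n)\to 0$, i.e.\ $\Im(y_n)_1-\|(y_n)'\|^{2}\to\infty$, contradicting the upper bound $\le 1/R$ in the display above.

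The main obstacle is precisely this last step. While each individual estimate is routine, the conclusion requires balancing the Euclidean and Heisenberg contributions: a naive triangle-type estimate is not sharp enough, because $x_n$ and $y_n$ could \emph{a priori} approach $\zeta$ tangentially, and the horosphere hypotheses are exactly what rules this out. A coordinate-free alternative passes through the Gromov hyperbolicity of $(\B^q,k_{\B^q})$: the two identities in \eqref{allineano} translate to the Gromov product identity $(0\mid x_n)_{y_n}=o(1)$, so $y_n$ lies within $\delta+o(1)$ of the Euclidean segment $[0,x_n]$, and Lemma \ref{gromov} would conclude once closeness to $[0,x_n]$ is upgraded to closeness to the geodesic ray $[0,\zeta)$. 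That upgrade, however, rests on the same tangential analysis, because as $x_n$ may approach $\zeta$ tangentially the segments $[0,x_n]$ do not converge to $[0,\zeta)$ beyond a compact initial portion; so the Siegel-coordinate computation remains the most efficient path.
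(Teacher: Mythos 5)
You correctly reduce to the case $x_n,y_n\to\zeta$ and derive the two-sided horofunction bound $R\le h_{\zeta,0}(y_n)\le Re^{L+o(1)}$, but the proof then stops at exactly the point where the real work begins. You announce that ``a direct calculation gives sharp asymptotic controls'' in Siegel coordinates that would force $h_\infty(y_n)\to 0$, and then in the next sentence acknowledge that this is ``precisely'' the obstacle and that ``a naive triangle-type estimate is not sharp enough.'' The Gromov-hyperbolicity alternative you sketch is likewise left open, since you concede it ``rests on the same tangential analysis.'' So the proposal correctly identifies what the horosphere hypothesis has to rule out (tangential approach collapsing the distinction between $x_n$ and $y_n$), but it does not actually rule it out; as it stands it is a plan, not a proof.

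For comparison, the paper sidesteps the explicit Siegel-coordinate asymptotics entirely by rescaling: choose $\sigma_n\in{\rm Aut}(\B^q)$ with $\sigma_n(x_n)=0$ and $\sigma_n(\zeta)=e_1$. The lower bound $h_{\zeta,0}(x_n)>Re^{-2L}$ (a consequence of $y_n\notin E_{\B^q}(0,\zeta,R)$, essentially the same estimate you write) translates, via the change-of-pole identity for horofunctions, into $\sigma_n(0)$ being confined to a fixed horosphere at $e_1$; combined with $k_{\B^q}(0,\sigma_n(0))=k_{\B^q}(0,x_n)\to\infty$ this gives $\sigma_n(0)\to e_1$. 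Meanwhile $\sigma_n(y_n)\to y_\infty\in\B^q$ with $y_\infty\notin E_{\B^q}(0,e_1,1)$, so $y_\infty$ is strictly off the geodesic ray $[0,e_1]$. The condition \eqref{allineano} becomes an asymptotic degeneracy of the triangle $(\sigma_n(0),0,\sigma_n(y_n))$; introducing the antipodal auxiliary point $z_n=-\alpha\sigma_n(0)/\|\sigma_n(0)\|$ and letting $n\to\infty$ yields a strict violation of the triangle inequality, which is the contradiction. The point is that the automorphism normalization converts the ``tangential balancing'' you were worried about into a plain strict triangle inequality in the limit, with the horosphere hypothesis entering only to guarantee $y_\infty$ is off-axis. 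If you want to salvage your approach, you should either carry out the Siegel-coordinate computation in full (not trivial) or adopt the rescaling device; as submitted, the missing step is a genuine gap.
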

\begin{proof}
Assume that this were not the case. Since $x_n\in E_{\B^q}(0,\zeta,R)$ and since the Kobayashi distance between $x_n$ and $y_n$ is bounded, by taking a subsequence of $x_n$ if necessary, we may then assume that $x_n,y_n\to\zeta$.

The automorphism group of the unit ball is transitive. Therefore for every positive integer $n$ we may find  $\sigma_n\in {\rm Aut}({\mathbb B}^q)$ so that $\sigma_n(x_n)=0$. By composing such map with a rotation, we may further suppose that $\sigma_n(\zeta)=e_1$. 

By \eqref{differentpole} and by invariance of the Kobayashi distance under automorphisms, we obtain that
\begin{align}
\label{relazioniorosfere}
h_{e_1,\sigma_n(0)}(z)=h_{e_1,0}(z)h_{e_1,\sigma_n(0)}(0)=h_{e_1,0}(z)h_{\zeta,0}(x_n),
\end{align}
and therefore that 
\begin{equation}
\label{ultimo}
E_{\B^q}(0,e_1,1)\subset E_{\B^q}(\sigma_n(0),e_1,R)=\sigma_n(E_{\B^q}(0,\zeta,R)).
\end{equation}

We claim that $\sigma_n(0)\to e_1$.
Notice that for every $n\in\mathbb N$ large enough we have $h_{\zeta,0}(x_n)> Re^{-2L}$. Indeed if for some $n$ sufficiently large this were not the case, then we would have 
$$
 h_{\zeta,0}(y_n)\le e^{k_{\B^q}(x_n,y_n)}h_{\zeta,0}(x_n)< R,
$$
contradicting the fact that $y_n\not\in E_{\B^q}(0,\zeta,R)$. Letting $z=\sigma_n(0)$ in \eqref{relazioniorosfere} we conclude that  for $n$ large enough $$h_{e_1,0}(\sigma_n(0))=h_{\zeta,0}(x_n)^{-1}<e^{2L}/R,$$ showing that the sequence $\sigma_n(0)$ is  eventually contained in $ E_{\B^q}(0,e_1, e^{2L}/R)$. The point $x_n$ converges to $\zeta$, and therefore $k_{\B^q}(0,x_n)\to\infty$. By invariance of the Kobayashi distance it follows that $k_{\B^q}(0,\sigma_n(0))\to\infty$, and thus that $\sigma_n(0)\to e_1$, proving the claim.

Let $0<\alpha<1$ and define $z_n\in\B^q$ as
$$
z_n:=-\alpha\frac{\sigma_n(0)}{\Vert\sigma_n(0)\Vert}.
$$
If we write $\beta:=\log\frac{1+\alpha}{1-\alpha}$, then for every positive integer $n$ we have $k_\B(0,z_n)=\beta$. Since $\sigma_n(0)\to e_1$ it follows that $z_n\to z_\infty=(-\alpha, 0,\dots ,0)$.

By \eqref{allineano} the sequence $\sigma_n(y_n)$ is relatively compact in $\mathbb B^q$. Therefore by taking a subsequence if necessary, we may assume that $\sigma_n(y_n)\to y_\infty\in \mathbb{B}^q$. Notice that since $y_n\not\in E_{\B^q}(0,\zeta,R)$, then by \eqref{ultimo} we must have $\sigma_n(y_n)\not\in E_{\B^q}(0,e_1,1)$, thus that $y_\infty\not\in E_{\B^q}(0,e_1,1)$.

We claim that $k_{\mathbb{B}^q}(z_\infty,y_\infty)<L+\beta$. Indeed, since $k_{\mathbb{B}^q}(0,y_\infty)=L$ and $k_\B(0,z_\infty)=\beta$, we get by triangular inequality that $k_{\mathbb{B}^q}(z_\infty,y_\infty)\leq L+\beta$. Equality holds if and only if $y_\infty$ is contained in the geodesic ray connecting the origin to $e_1$. But this is not possible since such geodesic is contained in $ E_{\B^q}(0,e_1,1)$.
Let thus $\delta>0$ be such that  $k_{\mathbb{B}^q}(z_\infty,y_\infty)< L +\beta-2\delta.$

By the last inequality and by \eqref{allineano} we may choose $n$ big enough such that $k_{\mathbb{B}^q}(z_n,\sigma_n(y_n))<L+\beta -\delta$ and  $$ k_{\mathbb{B}^q}(\sigma_n(0),0)-k_{\mathbb{B}^q}(\sigma_n(0),\sigma_n(y_n))=k_{\mathbb{B}^q}(0,x_n)-k_{\mathbb{B}^q}(0,y_n)\ge L-\delta.$$ We conclude that
\begin{align*}
k_{\mathbb{B}^q}(\sigma_n(0),z_n)-k_{\mathbb{B}^q}(\sigma_n(0),\sigma_n(y_n))&=k_{\mathbb{B}^q}(\sigma_n(0),0)+k_{\mathbb{B}^q}(0,z_n)-k_{\mathbb{B}^q}(\sigma_n(0),\sigma_n(y_n))\\
&\geq \beta +L-\delta\\
&>k_{\mathbb{B}^q}(z_n,\sigma_n(y_n)),
\end{align*}
contradicting the triangular inequality. 
\end{proof}

Before starting the proof of Theorem \ref{backward}, we need to estimate  the Kobayashi distance of horospheres near the center $\zeta$.
\begin{lemma}
\label{eqregion}
Let $\zeta\in \partial\B^q$ and $R>0$. Write $k_E$ for the Kobayashi distance of the horosphere $E_{\B^q}(0,\zeta,R)$. Then for all $\varepsilon>0$  there exists $0<R_\varepsilon<R$ such that 
$$k_E(x,y)\leq k_{\B^q}(x,y)+\varepsilon,\qquad\forall x,y\in E_{\B^q}(0,\zeta,R_\epsilon).$$
\end{lemma}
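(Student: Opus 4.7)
The plan is to transport everything to the Siegel half-space via the Cayley transform, where the horosphere $E_{\B^q}(0,\zeta,R)$ takes the simple form $E(\infty,R)=\{(z_1,z')\in\HH^q:\text{Im}\,z_1-\|z'\|^2>1/R\}$ and, crucially, is itself biholomorphic to $\HH^q$ through the explicit translation $T_R(z_1,z'):=(z_1+i/R,\,z')$. Since both the Cayley transform and $T_R$ are biholomorphisms, the Kobayashi distance $k_E$ pulls back to
$$k_E(x,y)=k_{\HH^q}\bigl(x-(i/R)e_1,\,y-(i/R)e_1\bigr)$$
for $x,y\in E(\infty,R)$, so the lemma reduces to comparing $k_{\HH^q}$ at two pairs of points that differ only by the fixed vertical translation $-(i/R)e_1$.

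By the triangle inequality it will suffice to bound each of $k_{\HH^q}(x,x-(i/R)e_1)$ and $k_{\HH^q}(y,y-(i/R)e_1)$ by $\varepsilon/2$ uniformly for $x,y$ ranging over the smaller horosphere $E(\infty,R_\varepsilon)$. To estimate the first I would use the holomorphic slice embedding $\iota_{x'}\colon\HH\to\HH^q$ defined by $\iota_{x'}(\xi):=(\xi+i\|x'\|^2,\,x')$, which sends $x_1-i\|x'\|^2\mapsto x$ and $x_1-i\|x'\|^2-i/R\mapsto x-(i/R)e_1$. Non-expansiveness of the Kobayashi distance then yields
$$k_{\HH^q}\bigl(x,x-(i/R)e_1\bigr)\le k_\HH\bigl(x_1-i\|x'\|^2,\,x_1-i\|x'\|^2-i/R\bigr)=\tfrac{1}{2}\log\frac{h(x)}{h(x)-1/R},$$
where $h(x):=\text{Im}\,x_1-\|x'\|^2$, and the final equality follows from translation invariance along the real axis together with the standard formula $k_\HH(ia,ib)=\tfrac12|\log(a/b)|$ for the Poincaré metric on the upper half-plane.

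Since $x\in E(\infty,R_\varepsilon)$ forces $h(x)>1/R_\varepsilon$ and the function $h\mapsto \tfrac12\log\frac{h}{h-1/R}$ is monotonically decreasing in $h$, one obtains $k_{\HH^q}(x,x-(i/R)e_1)\le \tfrac12\log\frac{R}{R-R_\varepsilon}$, and symmetrically for $y$. Combining these with the triangle inequality gives
$$k_E(x,y)\le k_{\B^q}(x,y)+\log\frac{R}{R-R_\varepsilon},$$
so any choice $R_\varepsilon<R(1-e^{-\varepsilon})$ forces the extra term to be less than $\varepsilon$. I do not foresee any real obstacle: the only point that wants a bit of care is the explicit choice of the one-dimensional holomorphic slice $\iota_{x'}$ through each $x$, since this is what cleanly reduces the higher-dimensional vertical-shift distance to an elementary Poincaré-metric computation in $\HH$.
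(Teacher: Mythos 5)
Your proof is correct and follows essentially the same route as the paper: pass to the Siegel half-space, realize the horosphere as a vertical translate of $\HH^q$, use the triangle inequality, and compare $z$ with its shift $z-(i/R)e_1$ via the holomorphic slice $\iota_{z'}(\xi)=(\xi+i\|z'\|^2,z')$. The only (inessential) difference is that you carry out the one-dimensional Poincaré-metric computation explicitly to obtain the closed-form bound $\tfrac12\log\frac{R}{R-R_\varepsilon}$ and an explicit $R_\varepsilon$, whereas the paper simply invokes the fact that $k_\HH(\zeta,\zeta-i/R)\to 0$ as $\mathrm{Im}\,\zeta\to\infty$.
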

\begin{proof}
Consider the change of coordinates from the unit ball to the Siegel half-space given by \eqref{cayley}. The horosphere $E_{\B^q}(0,\zeta,R)$ is mapped by such biholomorphism to the horosphere
$$E_{\HH^q}(I,\infty,R)=\left\{(z_1,z')\in \HH^q\colon {\rm Im}\,z_1>\|z'\|^2+\frac{1}{R}\right\}.$$
Consider the biholomorphism $T\colon E_{\HH^q}(I,\infty,R)\to  \HH^q$ given by $T(z):= ( z_1-i/R,z').$ For all $x,y\in E_{\HH^q}(I,\infty,R)$ we have that 
$k_E(x,y)=k_{\HH^q}(T(x),T(y))$.

Let $S>1/R>0$ be such that $k_{\HH}(\zeta,\zeta-i/R)\leq \varepsilon/2$ when ${\rm Im}\,\zeta>S$. Set $R_\varepsilon=1/S$.
 Given $x,y\in E_{\HH^q}(I,\infty,R_\varepsilon)$ we obtain that $$k_E(x,y)=k_{\HH^q}(T(x),T(y))\leq k_{\HH^q}(T(x),x)+k_{\HH^q}(x,y)+k_{\HH^q}(y,T(y)),$$
hence the result follows if we show that
$$
k_{\HH^q}(T(z),z)\le\varepsilon/2,\qquad \forall z\in E_{\HH^q}(I,\infty,R_\varepsilon).
$$
Let thus $z=(z_1,z')\in  E_{\HH^q}(I,\infty,R_\varepsilon)$. Consider the complex geodesic $i_{z'}:\HH\rightarrow \HH^q$ given by $i_{z'}(\xi)=(\xi+i\Vert z'\Vert^2,z')$.
Set $\zeta:=z_1-i\Vert z'\Vert^2$. Then $\zeta$ satisfies ${\rm Im}\,\zeta>S$ and $i_{z'}(\zeta)=z$, and  $i_{z'}(\zeta-i/R)=T(z)$.
 Therefore we have
$$
k_{\HH^q}(z,T(z))= k_{\HH}(\zeta,\zeta-i/R)\le\varepsilon/2.
$$

\end{proof}

\subsection{Localization of the Kobayashi distance near the boundary}

In this subsection we show that, up to changing  coordinates, we can compare $k_{\B^q}$ and  $k_\Omega$ in little $\B^q$-horospheres centered at $\zeta$, as the following result shows.

\begin{prop}\label{changeofcoordinates}
Let $\Omega$ be a bounded strongly convex domain with $C^4$ boundary and let $\zeta\in\partial\Omega$. Then there exists a change of coordinates in ${\rm Aut}(\mathbb C^q)$ so that in the new coordinates $\zeta=e_1$ and the following holds: for every $\varepsilon>0$ we may find $R_{\varepsilon}>0$ so that $E_{\B^q}(0,e_1,R_\varepsilon)\subset \Omega$ and
$$
k_{\B^q}(z,w)-\varepsilon\le k_\Omega(z,w)\le k_{\B^q}(z,w)+\varepsilon,\qquad\forall z,w\in E_{\B^q}(0,e_1,R_\varepsilon). 
$$
\end{prop}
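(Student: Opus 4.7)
The plan is to follow the three-step recipe sketched in the introduction: first, apply a Fefferman-type polynomial change of coordinates to make $\Omega$ osculate $\B^q$ to high order at $\zeta$; second, apply a parabolic ``push'' so that the image simultaneously contains a $\B^q$-horosphere at $e_1$ and is locally contained in $\B^q$ near $e_1$; third, use the Andersén--Lempert jet interpolation theorem to promote the resulting local biholomorphism to a genuine element of ${\sf Aut}(\C^q)$. Once these two local properties hold, the desired comparison of Kobayashi distances follows by combining monotonicity, Lemma \ref{eqregion}, and a standard localization of the Kobayashi distance near a strongly pseudoconvex boundary point.

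For the coordinate change, I would first exploit strong convexity and $C^4$ regularity of $\partial\Omega$: by aligning the real tangent space at $\zeta$ with that of $\B^q$ at $e_1$, normalizing the Levi form, and killing the remaining lower-order Taylor coefficients with a polynomial correction, one produces a polynomial $F\in{\sf Aut}(\C^q)$ with $F(\zeta)=e_1$ such that the image domain has defining function of the form $\rho(z)=\|z\|^2-1+O(\|z-e_1\|^4)$ near $e_1$. Switching to Siegel coordinates via the Cayley transform, the image is a domain whose defining function differs from that of $\HH^q$ by a term of order $O(|z|^{-k})$ (with $k$ large) as $z\to\infty$. Applying a parabolic translation $(z_1,z')\mapsto(z_1+i\delta,z')$ for a suitable small $\delta>0$, and transferring back, we obtain a local biholomorphism $\Phi_0$ defined near $\zeta$ such that $\Phi_0(\Omega)$ both contains a $\B^q$-horosphere at $e_1$ and is locally contained in $\B^q$ near $e_1$ (the parabolic push moves the boundary in such a way that the sign of the $O(\|z-e_1\|^4)$ error becomes irrelevant for the inclusions once the neighborhood of $e_1$ is taken small enough). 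The Andersén--Lempert jet interpolation theorem then provides $\Phi\in{\sf Aut}(\C^q)$ whose jet at $\zeta$ agrees with that of $\Phi_0$ to sufficiently high order that $\Phi(\Omega)$ inherits both local properties; we work from now on in these coordinates.

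To derive the distance estimates, fix $R_0>0$ with $E_{\B^q}(0,e_1,R_0)\subset\Omega$ and a neighborhood $U$ of $e_1$ with $\Omega\cap U\subset\B^q$. Given $\varepsilon>0$, choose $R_\varepsilon>0$ small enough that $E_{\B^q}(0,e_1,R_\varepsilon)\subset U$ and that Lemma \ref{eqregion} gives the $\varepsilon$-comparison between $k_{\B^q}$ and the Kobayashi distance of $E_{\B^q}(0,e_1,R_0)$ on $E_{\B^q}(0,e_1,R_\varepsilon)$. For $z,w\in E_{\B^q}(0,e_1,R_\varepsilon)$, the upper bound $k_\Omega(z,w)\le k_{\B^q}(z,w)+\varepsilon$ is immediate from the monotonicity $k_\Omega\le k_{E_{\B^q}(0,e_1,R_0)}$ combined with Lemma \ref{eqregion}. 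The lower bound follows from the classical localization of the Kobayashi distance at strongly pseudoconvex boundary points (in the spirit of Graham and Forstneri\v c--Rosay): for points sufficiently close to $e_1$, $k_\Omega(z,w)$ is within $\varepsilon/2$ of $k_{\Omega\cap U}(z,w)$, while $k_{\Omega\cap U}\ge k_{\B^q\cap U}$ by monotonicity under inclusion, and $k_{\B^q\cap U}$ is in turn within $\varepsilon/2$ of $k_{\B^q}$ by the same localization principle applied to $\B^q$ at $e_1$. Shrinking $R_\varepsilon$ once more if necessary yields $k_\Omega(z,w)\ge k_{\B^q}(z,w)-\varepsilon$.

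The main obstacle I expect is Step 2--3: verifying quantitatively that the $O(\|z-e_1\|^4)$ error in the defining function (arising from the $C^4$ hypothesis) is sufficient for the parabolic push to produce both inclusions simultaneously, and then rigorously invoking Andersén--Lempert to pass from a local biholomorphism to a global automorphism of $\C^q$ while preserving the local inclusions. The key observation that makes this work is that both inclusions depend only on a sufficiently high finite jet of $\Phi_0$ at $\zeta$, which is exactly what the jet interpolation theorem controls.
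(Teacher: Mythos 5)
Your overall strategy is the same as the paper's: a Fefferman-type change of coordinates, a parabolic ``push'', Anders\'en--Lempert jet interpolation to upgrade the local map to an element of ${\rm Aut}(\mathbb C^q)$, and then a comparison of Kobayashi distances via Lemma \ref{eqregion} and localization near the boundary point. However, there are two places where you assert facts whose verification is the actual content of the paper's argument, and one of them is a genuine gap.

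First, a minor point. You claim that after the push ``the sign of the $O(\|z-e_1\|^4)$ error becomes irrelevant.'' The paper instead records carefully (Proposition \ref{feffer}) that Fefferman's coordinates produce a defining function $-\mathrm{Im}\,w_1+\|w'\|^2-P_4+o(\cdot)$ with $P_4\ge C(|\mathrm{Re}\,w_1|^4+\|w'\|^4)>0$, and this definite sign is used to make one of the two inclusions immediate ($r\ge 0$ gives $E_{\HH^q}(I,0,R)\cap B(0,\rho)\subset\eta(\Omega\cap V)$ with no further work); the other inclusion is then established by a quadratic-versus-quartic estimate. Your sign-agnostic version is plausibly salvageable, since the parabolic push opens a quadratic gap that should dominate a quartic error near $e_1$, but you would have to carry out the estimate for both inclusions rather than just one; as written this step is asserted, not proved.

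Second, and more seriously: the lower bound $k_\Omega\ge k_{\B^q}-\varepsilon$. You invoke ``classical localization of the Kobayashi distance'' to claim $k_\Omega(z,w)\ge k_{\Omega\cap U}(z,w)-\varepsilon/2$ for $z,w$ near $e_1$. But the classical localization results of Graham and Forstneri\v c--Rosay (\cite[Theorem 2.1]{FR}) are for the infinitesimal Kobayashi--Royden metric $\kappa$, not the integrated distance $k$. Passing from $\kappa$ to $k$ is exactly where the work lies: you must control the Kobayashi geodesic connecting $z$ and $w$ and show it does not leave the neighborhood $U$. The paper does precisely this with Lemma \ref{puissance}, which uses Lempert's theory (in particular Huang's $C^{1,1/4}$ estimate \eqref{huang} for extremal discs, which in turn is where the $C^3$ regularity of $\partial\Omega$ enters) to show that the geodesic segment joining two points close to $\zeta$ stays close to $\zeta$ and moreover has small euclidean length, so that the pointwise defect $c\,d(z,\partial\B^q)\kappa_{\B^q}(z;v)\le c\|v\|$ integrates to at most $c\cdot\ell(\gamma)\le\varepsilon$. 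Without an ingredient of this kind, the claim that $k_\Omega$ and $k_{\Omega\cap U}$ agree up to $\varepsilon$ near a strongly pseudoconvex boundary point does not follow from standard references, and this is the missing step in your proposal. (There is also a subtlety the paper flags: after the coordinate change by an element of ${\rm Aut}(\C^q)$ the domain is only biholomorphic to a strongly convex one, so one must observe that Lemma \ref{puissance} is invariant under such changes of coordinates.)
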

We first need to  prove some preparatory results. The following is proved in  \cite[Lemma 2]{F} (see also \cite[Proposition 9.7.7]{Kr}).
\begin{prop}
\label{feffer}
Let $\Omega\subset \C^q$ be a domain and let $\zeta\in \partial \Omega$ be a $C^4$-smooth strongly pseudoconvex point. There exists  a biholomorphic mapping $ w$ defined on a neighborhood $V$ of $\zeta$ sending $\zeta$ to  the origin, and sending $\Omega\cap V$ to a region with local   defining function at the origin of the form
\begin{equation}
\label{cheoleupagher}
\psi(w)=-\textrm{Im}\,w_1 +\Vert w'\Vert^2- P_4(\textrm{Re}\, w_1,w',\overline{w'})+o(|\textrm{Re}\, w_1|^4+\Vert w'\Vert^4),
\end{equation}
with $(w_1,w')\in w(V)$. Here $P_4$ is a real-valued 4th-degree homogeneous polynomial in $\textrm{Re}\,w_1,w'$ and $\overline{w'}$ satisfying $P_4(\textrm{Re}\,w_1,w',\overline{w'})\ge C(|\textrm{Re}\, w_1|^4+\Vert w'\Vert^4)$, for some $C>0$. 
\end{prop}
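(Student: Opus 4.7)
The plan is to derive the claim as the classical asymptotic normal form of Fefferman at a $C^4$ strongly pseudoconvex boundary point, by a successive elimination argument applied to the Taylor expansion of a chosen defining function. I would pick any $C^4$ defining function $\rho$ for $\Omega$ near $\zeta$. A translation sending $\zeta\mapsto 0$, a unitary rotation of $\C^q$ aligning $d\rho(0)$ with $-d\,\mathrm{Im}\,w_1$, and a rescaling of $\rho$ put it in the form
\[
\rho(w)= -\mathrm{Im}\,w_1 + H(w,\bar w) + R_3(w,\bar w) + R_4(w,\bar w) + O(\|w\|^5),
\]
with $H$ the Hessian quadratic part and $R_k$ the $k$-homogeneous real part.

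The key idea is that at each order $k\in\{2,3,4\}$ we split $R_k(w,\bar w)=2\,\mathrm{Re}\,Q_k(w)+M_k(w,\bar w)$, where $Q_k$ is the purely holomorphic part and $M_k$ consists of mixed monomials. Replacing $w_1$ by $w_1+2i\sum_{k\le 4}Q_k(w)$, which is locally biholomorphic since its Jacobian at $0$ is the identity, cancels all holomorphic quadratic, cubic and quartic contributions because $-\mathrm{Im}(2iQ_k)=-2\,\mathrm{Re}\,Q_k$. Strong pseudoconvexity forces the Hermitian form in $H$ restricted to the complex tangent space $\{w_1=0\}$ to be positive definite, so a $\C$-linear change of the $w'$-variables normalizes that restriction to $\|w'\|^2$; the surviving quadratic mixed terms involving $w_1$ or $\bar w_1$ pair with $-\mathrm{Im}\,w_1$ and may be absorbed into higher-order remainders by a further polynomial substitution.

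At orders $3$ and $4$ the same mechanism kills the mixed monomials in $M_k$ containing a factor of $w_1$ or $\bar w_1$, this time via polynomial perturbations of the $w'$-coordinates that are tangent to the identity to sufficiently high order and therefore do not disturb the previously normalized pieces. What survives of $R_4$ after this cleanup is a real homogeneous polynomial of degree $4$ in $(\mathrm{Re}\,w_1,w',\bar w')$ alone, which we label $-P_4$; the remainder is $o(|\mathrm{Re}\,w_1|^4+\|w'\|^4)$.

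The main obstacle I anticipate is the strict lower bound $P_4\ge C(|\mathrm{Re}\,w_1|^4+\|w'\|^4)$. This is not a consequence of strong pseudoconvexity at $\zeta$ alone: one must propagate positivity of the Levi form to a whole neighborhood of $0$ in $\partial\Omega$ and then Taylor-expand along moving base points. Parametrising $\partial\Omega$ locally as a graph $\mathrm{Im}\,w_1=\|w'\|^2+P_4(\mathrm{Re}\,w_1,w',\bar w')+o(\cdot)$ and comparing the Levi form of $\rho$ at an arbitrary nearby point of $\partial\Omega$ with its value at $0$ yields the required uniform quartic positivity in both the $w'$- and $(\mathrm{Re}\,w_1)$-directions. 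The detailed verification of this last step is exactly \cite[Lemma~2]{F} and \cite[Proposition~9.7.7]{Kr}, whose calculation I would invoke to conclude.
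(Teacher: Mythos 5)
The paper offers no proof of this statement at all: it simply cites \cite[Lemma~2]{F} and \cite[Proposition~9.7.7]{Kr}, and you also close by invoking those references. The parts of your reconstruction dealing with orders $2$ and $3$ are the standard Moser--Chern normalization and are fine in outline. The difficulty is in how you propose to account for the strict positivity $P_4\geq C(|\textrm{Re}\,w_1|^4+\Vert w'\Vert^4)$, and there you have a genuine gap.

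You write that the lower bound on $P_4$ should follow from ``propagating positivity of the Levi form to a whole neighborhood of $0$ in $\partial\Omega$'' and comparing Levi forms along moving base points. This mechanism cannot work, because the positivity of $P_4$ is \emph{not} a geometric invariant of the germ of $\partial\Omega$ at $\zeta$: it is a property of the particular coordinate system that the proposition asserts can be found. The cleanest way to see this is to take $\Omega=\B^q$ and $\zeta=e_1$. Under the Cayley transform $\tilde{\mathcal C}$ the image of $\B^q$ is exactly the Siegel domain $\HH^q=\{\textrm{Im}\,w_1>\Vert w'\Vert^2\}$, whose canonical local defining function near $0$ is $-\textrm{Im}\,w_1+\Vert w'\Vert^2$ with \emph{no} quartic correction whatsoever, i.e.\ $P_4\equiv 0$. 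Yet the sphere is as strongly pseudoconvex as possible, and its Levi form is positive on a full neighborhood of $e_1$. So positivity of the nearby Levi form cannot force $P_4>0$, and any argument of the type you sketch would have to fail already on $\B^q$. (One can also see the same point from the algebra: the Levi form of $-\textrm{Im}\,w_1+\Vert w'\Vert^2 \mp P_4$ at points near $0$ only sees second-order derivatives of $P_4$ along the complex tangential directions, which is far weaker than definiteness of $P_4$ as a quartic form in $(\textrm{Re}\,w_1,w',\overline{w'})$.)

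What actually happens in \cite{F} is \emph{constructive}: after the order-$2$ and order-$3$ normalization that you describe, one performs a further explicit polynomial change of coordinates (a degree-two perturbation tangent to the identity, carefully chosen) which deliberately \emph{injects} a strictly positive-definite quartic of the required form, dominating whatever sign-indefinite quartic remainder is already present; the strong pseudoconvexity hypothesis is used to ensure that this can be done while keeping the quadratic normalization $\Vert w'\Vert^2$ intact. For the ball, this produces a non-Cayley chart in which $P_4>0$ even though the Cayley chart gives $P_4=0$. Since you ultimately defer to \cite{F} and \cite{Kr} for the details, your end-point agrees with the paper's; but the heuristic you give for the key step is a misidentification of where the positivity comes from, and a careful reader following your sketch would get stuck precisely there.
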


After applying the (local) change of variables $w=w(z)$, the boundaries of $\Omega$ and of the Siegel half-space $\HH^q$ have a $3$-th order contact at the origin.

Consider the biholomorphism $\tilde {\mathcal  C}:\B^q\rightarrow \HH^q$  given by
\begin{equation}
\label{Cayley2}
\tilde {\mathcal  C}(z_1,z')=\left(i\frac{1-z_1}{1+z_1},\frac{z'}{1+z_1}\right),
\end{equation}
which is equal to the Cayley transform $\mathcal  C$ defined by \eqref{cayley} precomposed with the automorphism $(z_1,z')\mapsto (-z_1,z')$ of $\B^q$, and as such it maps $e_1$ to $0$ and $-e_1$ to infinity.

The horospheres of $\HH^q$ with pole $I=(i,0,\dots,0)$ and center the origin are the images under the map $\tilde{\mathcal C}$ of the horospheres of the ball with pole $0$ and center $e_1$. Therefore for every $R>0$ we can write them as
$$
E_{\HH^q}(I,0,R)=\left\{w\in  \mathbb C^q\,|\,\textrm{Im}\, w_1>\Vert w'\Vert^2 +\frac{|w_1|^2}{R}\right\}.
$$

Choose a constant $D>0$ so that, whenever $w$ is sufficiently close to the origin, we have

\begin{equation}
\label{comparazione}
C(|\textrm{Re}\, w_1|^4+\Vert w'\Vert^4)\le P_4(\textrm{Re}\, w_1,w',\overline{w'})\le \frac{D}{2}(|\textrm{Re}\, w_1|^4+\Vert w'\Vert^4).
\end{equation}

Fix $R>0$ and let  $t:=\frac{1}{R}$. The holomorphic map defined by $\Phi(w_1,z'):= (w_1+it,w')$ is an automorphism of $\C\mathbb{P}^q$ which fixes the line at infinity and  satisfies  $\Phi(\HH^q)=E_{\HH^q}(I,\infty,R)$.
Conjugating $\Phi$ with the involution
$(w_1,w')\mapsto (-\frac{1}{w_1}, -\frac{iw'}{w_1})$ we obtain the automorphism of $\C\mathbb{P}^q$
$$
T(w_1,w'):=\left(\frac{R w_1}{R-iw_1},\frac{R w'}{R-iw_1}\right),
$$
which fixes the line $w_1=0$ and satisfies  $T(\HH^q)= E_{\HH^q}(I,0,R)$.

Fix $0<R<D^{-1}$. Let $V$ and $w(z)$ as in the previous proposition and, up to taking a smaller $V$ if necessary, define the local biholomorphism $\eta\colon V\rightarrow \mathbb C^q$ as $\eta=T\circ w$. It is not hard to show that in the coordinates $\eta=\eta(z)$ a local defining function of $\Omega\cap V$ at the origin can be written as \begin{equation}
\label{cheoleupagherR}
\psi(\eta)=-\textrm{Im}\,\eta_1 + \Vert \eta'\Vert^2+\frac{|\eta_1|^2}{R}- P_4(\textrm{Re}\, \eta_1,\eta',\overline{\eta'})+o(|\textrm{Re}\, \eta_1|^4+\Vert \eta'\Vert^4),
\end{equation} 
where $P_4$ is the same polynomial as in Proposition \ref{feffer}.  Notice that  the higher order terms still do not depend on $\textrm{Im}\,\eta_1$.

\begin{lemma}
There exists $\rho>0$ so that,
$$
E_{\HH^q}(I,0,R)\cap B(0,\rho)\subset \eta(\Omega\cap V)\cap B(0,\rho)\subset \HH^q\cap B(0,\rho)
$$
\end{lemma}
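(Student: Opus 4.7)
The plan is to verify the two inclusions separately, exploiting the explicit form (6.3) of the defining function. Set
$$L(\eta) := -\textrm{Im}\,\eta_1 + \|\eta'\|^2 + \frac{|\eta_1|^2}{R},$$
so that $L$ is the defining function of $E_{\HH^q}(I, 0, R)$ and $\psi(\eta) = L(\eta) - P_4(\textrm{Re}\,\eta_1, \eta', \overline{\eta'}) + \phi(\eta)$, where by the Fefferman-plus-$T$ normalization the error $\phi(\eta) = o(|\textrm{Re}\,\eta_1|^4 + \|\eta'\|^4)$ is independent of $\textrm{Im}\,\eta_1$.

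For the first inclusion, suppose $\eta \in E_{\HH^q}(I, 0, R) \cap B(0, \rho)$, i.e.\ $L(\eta) < 0$. Using the lower bound $P_4 \geq C(|\textrm{Re}\,\eta_1|^4 + \|\eta'\|^4)$, for $\rho$ small enough we have $|\phi| \leq P_4/2$ on $B(0, \rho)$, hence $-P_4 + \phi \leq 0$ and $\psi(\eta) \leq L(\eta) < 0$, i.e.\ $\eta \in \eta(\Omega \cap V)$ (after also requiring $B(0,\rho) \subset \eta(V)$).

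For the second inclusion I would argue by contradiction. Set $t = \textrm{Im}\,\eta_1$, $b = \textrm{Re}\,\eta_1$, $a = \|\eta'\|^2$, and assume $\psi(\eta) < 0$ but $t \leq a$. The crucial observation is that since both $P_4$ and $\phi$ are independent of $t$,
$$\frac{\partial \psi}{\partial t}(\eta) = -1 + \frac{2t}{R},$$
which is strictly negative for $t < R/2$. Choosing $\rho^2 < R/2$, one has $a \leq \rho^2 < R/2$, so with $b$ and $\eta'$ fixed the map $t \mapsto \psi$ is strictly decreasing on $(-\infty, a]$, whence $\psi(\eta) \geq \psi\big|_{t=a}$. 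At $t = a$ the linear part $-\textrm{Im}\,\eta_1 + \|\eta'\|^2$ cancels, leaving
$$\psi\big|_{t=a} = \frac{b^2 + a^2}{R} - P_4 + \phi.$$
Combining $P_4 \leq \frac{D}{2}(b^4 + a^2)$ with $b^4 \leq \rho^2 b^2$ on $B(0, \rho)$ gives
$$\psi\big|_{t=a} \geq b^2\left(\frac{1}{R} - \frac{D\rho^2}{2}\right) + a^2\left(\frac{1}{R} - \frac{D}{2}\right) - |\phi|.$$
Since $R < 1/D$, both coefficients are strictly positive for $\rho$ small, and $|\phi| = o(b^4 + a^2)$ is dominated by the two quadratic terms; hence $\psi\big|_{t=a} > 0$ for $0 \neq \eta \in B(0,\rho)$, contradicting $\psi(\eta) < 0$. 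Therefore $t > a$, i.e.\ $\eta \in \HH^q$.

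The main obstacle is the second inclusion: the naive bound $\psi \geq L - P_4$ is useless because $P_4$ can exceed $L$ in magnitude on parts of $\eta(\Omega)$. The trick that saves the argument is to reduce the problem to the slice $t = a$ by $\textrm{Im}\,\eta_1$-monotonicity, which relies on the specific form of the normalization (the $\textrm{Im}\,\eta_1$-independence of higher-order terms) together with the strict inequality $R < 1/D$, which ensures that the quadratic term $|\eta_1|^2/R$ dominates the quartic correction $P_4$ precisely on the diagonal $\{t = a\}$.
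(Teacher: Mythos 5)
Your proof is correct, and for the second inclusion (the harder direction) it takes a genuinely different route from the paper's. The first inclusion is handled identically: $r := P_4 - \phi \geq 0$ near the origin, so $\psi \leq L < 0$. For the second inclusion, the paper plugs the upper bound $r \leq D(b^4+a^2)$ into $\psi < 0$, rearranges into the quadratic inequality $D\|\eta'\|^4 - \|\eta'\|^2 + c > 0$ in $x = \|\eta'\|^2$, and analyzes the roots $t_1,t_2$ via the quadratic formula and a Taylor expansion of $\sqrt{1-4Dc}$, concluding $\|\eta'\|^2 < t_1 = \textrm{Im}\,\eta_1 + (D-R^{-1})|\eta_1|^2 + O(|\eta_1|^3)$. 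Your argument instead exploits the key structural feature that $P_4$ and the error $\phi$ are independent of $\textrm{Im}\,\eta_1$ — which the paper notes but uses only implicitly — to get $\partial_t\psi = -1+2t/R < 0$ near the origin, then slides down to the diagonal slice $\{\textrm{Im}\,\eta_1 = \|\eta'\|^2\}$ where the linear terms cancel by design and the comparison $|\eta_1|^2/R$ versus $P_4$ becomes a direct coefficient check, with $R < 1/D$ entering exactly once and transparently. This avoids the root-expansion computation. One small imprecision: your claim $\psi|_{t=a} > 0$ for $\eta \neq 0$ fails when $(\textrm{Re}\,\eta_1, \eta') = (0,0)$ with $\textrm{Im}\,\eta_1 \neq 0$ (there $\psi|_{t=a}=\psi(0)=0$), but the weak inequality $\psi|_{t=a} \geq 0$ still contradicts $\psi(\eta)<0$, so the argument is unaffected.
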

\begin{proof}
The local defining function of $\Omega':=\eta(\Omega\cap V)$ at the origin is of the form
$$
\psi(\eta)=-\textrm{Im}\,\eta_1 + \Vert \eta'\Vert^2+\frac{|\eta_1|^2}{R}- r(\textrm{Re}\, \eta_1,\eta',\overline{\eta'}),
$$
where $r$ is the sum of $P_4$ and the  higher order terms. By \eqref{comparazione} it follows that whenever $\eta$ is sufficiently close to the origin, we have that
$$
0\le r(\textrm{Re}\, \eta_1,\eta',\overline{\eta'})\le D(|\textrm{Re}\, \eta_1|^4+\Vert \eta'\Vert^4).
$$
Suppose first that $\eta\in E_{\HH^q}(I,0,R)$ is close to the origin. Since $r$ is non negative it follows immediately that $\psi(\eta)<0$, proving that $\eta\in \Omega'$.

If on the other hand we have that if $\eta\in \Omega'$ is close to the origin, then 
\begin{align*}
0&<\textrm{Im}\, \eta_1-\frac{|\eta_1|^2}{R}-\Vert \eta'\Vert^2+r(\textrm{Re}\,\eta_1,\eta',\overline{\eta'})\\
&<\textrm{Im}\, \eta_1-\frac{|\eta_1|^2}{R}-\Vert \eta'\Vert^2+D(|\textrm{Re}\, \eta_1|^4+\Vert \eta'\Vert^4),
\end{align*}
and therefore
$$
D\Vert \eta'\Vert^4-\Vert \eta'\Vert^2+\textrm{Im}\,\eta_1-R^{-1}|\eta_1|^2+D|\textrm{Re}\,\eta_1|^4>0.
$$
As $\eta$ converges to the origin the corresponding second degree equation in $\Vert \eta'\Vert^2$ has two solutions $0<t_1<t_2$. The solution $t_2$ converges to $1/D$, while $$t_1=\frac{1-\sqrt{1-4D\textrm{Im}\,\eta_1+4DR^{-1}|\eta_1|^2-4D^2|\textrm{Re}\,\eta_1|^4}}{2D}\to 0 . $$
Hence if $\eta$ is small enough,   $\Vert \eta'\Vert^2<t_1$, which immediately implies  $\eta_1\neq0$.
Moreover, 
$$
\Vert \eta'\Vert^2<\textrm{Im}\, \eta_1+(D-R^{-1})|\eta_1|^2+O(|\eta_1|^3),
$$
proving that whenever $\eta\in \Omega'$ is sufficiently close to the origin, we have $\eta\in \HH^q$.
\end{proof}

Consider now the  biholomorphism $\tilde\eta:=\tilde{\mathcal C}^{-1}\circ\eta$ sending  $\zeta$ to $e_1$. 
 If $\varphi$ is a biholomorphism defined in a neighborhood of $\zeta$ such that 
$$\varphi(z)-\tilde \eta(z)=O(\Vert z-\zeta\Vert^{d+1}),$$
with $d$ sufficiently large, then the expression \eqref{cheoleupagherR} remains unchanged when we replace $\eta$ with $\tilde{\mathcal C}\circ \varphi$.

The proof of the previous lemma relies uniquely on the form of the boundary defining function \eqref{cheoleupagherR}. Therefore, up to taking a smaller $V$ if necessary, the lemma remains valid when we consider $\tilde{\mathcal C}\circ \varphi$ instead of $\eta$. We conclude that, for every given map $\varphi$ as above, there exists $\rho>0$ sufficiently small such that
$$
E_{\B^q}(0,e_1,R)\cap B(0,\rho)\subset \varphi(\Omega\cap V)\cap B(0,\rho)\subset \B^q\cap B(0,\rho).
$$
By jet interpolation in Anders\'en-Lempert theory \cite[Proposition 6.3]{AL} we may choose $\varphi$ to be an automorphism of $\mathbb C^q$. Since we can always find $0<R'<R$ so that $E_{\B^q}(0,e_1,R')\subset E_{\B^q}(0,e_1,R)\cap B(0,\rho)$, we conclude the following
\begin{lemma}
\label{coordinates}
Let $\Omega\subset \C^q$ be a domain and let $\zeta\in \partial \Omega$ be a $C^4$-smooth strongly pseudoconvex point.
Then there exists a change of coordinates in ${\rm Aut}(\mathbb C^q)$, and constants $\rho,R>0$ so that in the new coordinates we have $\zeta=e_1$ and the two following inclusions hold
$$
E_{\B^q}(0,e_1,R)\subset \Omega\qquad\text{and}\qquad  \Omega\cap B(e_1,\rho)\subset \B^q.
$$
\end{lemma}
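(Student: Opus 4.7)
The plan is to simply assemble the ingredients already assembled in the paragraphs preceding the statement. First, apply Proposition \ref{feffer} to obtain a local biholomorphism $w$ near $\zeta$ with $w(\zeta)=0$, bringing $\partial\Omega$ into the Fefferman normal form \eqref{cheoleupagher}, with positive quartic term $P_4$. Fix any $0<R<D^{-1}$, where $D$ is the constant in \eqref{comparazione}, and compose $w$ with the rational automorphism $T$ of $\C\Po^q$ that carries $\HH^q$ onto the horosphere $E_{\HH^q}(I,0,R)$. The resulting local map $\eta:=T\circ w$ is a biholomorphism near $\zeta$ producing a defining function of the form \eqref{cheoleupagherR}, and the preceding lemma then yields $\rho>0$ such that
$$E_{\HH^q}(I,0,R)\cap B(0,\rho)\subset \eta(\Omega\cap V)\cap B(0,\rho)\subset \HH^q\cap B(0,\rho).$$

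Next, pass to the ball side by the inverse Cayley transform \eqref{Cayley2}: set $\tilde\eta:=\tilde{\mathcal C}^{-1}\circ\eta$, a local biholomorphism sending $\zeta$ to $e_1$ and sending $E_{\HH^q}(I,0,R)$ to $E_{\B^q}(0,e_1,R)$. Since $\tilde{\mathcal C}^{-1}$ is a local biholomorphism near $0$, we may shrink $\rho$ if necessary to obtain, for some new $\rho>0$,
$$E_{\B^q}(0,e_1,R)\cap B(e_1,\rho)\subset \tilde\eta(\Omega\cap V)\cap B(e_1,\rho)\subset \B^q\cap B(e_1,\rho).$$

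The third step is to globalize. By Anders\'en--Lempert jet interpolation \cite[Proposition 6.3]{AL}, for any $d$ we may choose $\varphi\in{\sf Aut}(\C^q)$ whose $d$-jet at $\zeta$ coincides with that of $\tilde\eta$. As noted in the excerpt, the derivation of the sandwiching above from the defining function \eqref{cheoleupagherR} only used the terms of order up to four, so by taking $d$ large enough the estimate of the preceding lemma remains valid after replacing $\tilde\eta$ by $\varphi$, perhaps after a further shrinking of $\rho$. Interpreting $\varphi$ as a change of coordinates, we may therefore assume $\zeta=e_1$ and
$$E_{\B^q}(0,e_1,R)\cap B(e_1,\rho)\subset \Omega\cap B(e_1,\rho)\subset \B^q\cap B(e_1,\rho).$$

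Finally, the second inclusion of the lemma is the right-hand containment above. For the first, note that the horospheres $E_{\B^q}(0,e_1,R')$ form a neighborhood basis of $e_1$ inside $\B^q$ as $R'\to 0$, so we may pick $0<R'<R$ small enough that $E_{\B^q}(0,e_1,R')\subset B(e_1,\rho)$; combined with the horosphere inclusion in the middle display this gives $E_{\B^q}(0,e_1,R')\subset \Omega$, and relabeling $R'$ as $R$ finishes the proof. The only genuinely delicate step is the Anders\'en--Lempert interpolation: one has to verify that approximating $\tilde\eta$ by a global automorphism to sufficiently high order at $\zeta$ is compatible with the higher-order remainder in \eqref{cheoleupagherR}, i.e.\ that a finite-jet match is sufficient to preserve the local sandwiching, which is the content of the $o$-term estimate combined with the strict positivity of $P_4$.
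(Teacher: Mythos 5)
Your proposal is correct and reproduces the paper's argument faithfully: Fefferman normal form, the parabolic shift $T$, the local sandwiching from the preceding lemma, passage to the ball via $\tilde{\mathcal C}^{-1}$, globalization by Anders\'en--Lempert jet interpolation, and the final shrink of $R$ so that the small horosphere fits inside $B(e_1,\rho)$. The only cosmetic remark is that calling the horospheres $E_{\B^q}(0,e_1,R')$ a ``neighborhood basis of $e_1$ inside $\B^q$'' is a slight abuse (they are tangent to $\partial\B^q$, so none contains a full relative neighborhood of $e_1$); what you actually use and correctly justify is that $E_{\B^q}(0,e_1,R')\subset B(e_1,\rho)$ for $R'$ small, which is all that is needed.
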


Let $\Omega\subset \C^q$ be a bounded strongly convex domain with $C^3$ boundary.
Let $\mathscr F$ be the family of complex geodesics $\varphi:\D\rightarrow \Omega$ that satisfy (see \cite{CHL,H}) 
$$
d(\varphi(0),\partial D)=\max_{\tau\in\D}d(\varphi(\tau),\partial \Omega).
$$
Then by \cite[Proposition 1]{H} it follows that there exists  $C>0$ so that for every $\varphi\in\mathscr F$ and $\tau_1,\tau_2\in\D$ we have
\begin{equation}
\label{huang}
\Vert \varphi^{(j)}(\tau_1)-\varphi^{(j)}(\tau_2)\Vert\le C|\tau_1-\tau_2|^{1/4}, \quad j=0,1.
\end{equation}
{}
\begin{lemma}
\label{puissance}
Let $\Omega\subset \C^q$ be a bounded strongly convex domain with $C^3$ boundary and let $\zeta\in\partial\Omega$. Then given $\varepsilon>0$ we may find $\delta>0$ so that for every $z,w\in \Omega\cap B(\zeta,\delta)$ the geodesic segment  for the Kobayashi distance $\gamma$ connecting $z$ and $w$ is contained in $\Omega\cap B(\zeta,\varepsilon)$ and has euclidean length $\ell(\gamma)<\varepsilon$.
\end{lemma}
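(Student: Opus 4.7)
The plan is to argue by contradiction via a normal-families argument applied to the complex geodesics carrying the Kobayashi-geodesic segments $\gamma_n$. Suppose there exist $\varepsilon_0>0$ and sequences $z_n,w_n\in\Omega$ with $z_n,w_n\to\zeta$ such that each $\gamma_n$ (the Kobayashi-geodesic segment joining $z_n,w_n$) either fails to lie in $B(\zeta,\varepsilon_0)$ or has $\ell(\gamma_n)\geq\varepsilon_0$. By \cite[Lemma 3.3]{GS}, $\gamma_n$ is contained in a complex geodesic $\varphi_n\colon\D\to\Omega$, which after reparametrization by an automorphism of $\D$ we may assume lies in $\mathscr{F}$. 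Write $z_n=\varphi_n(\tau_n)$, $w_n=\varphi_n(\sigma_n)$, so $\gamma_n=\varphi_n(\eta_n)$ where $\eta_n\subset\D$ is the hyperbolic geodesic segment joining $\tau_n,\sigma_n$.

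By~(\ref{huang}) both $\{\varphi_n\}$ and $\{\varphi_n'\}$ are uniformly $\frac14$-Hölder continuous on $\overline\D$; combining this Hölder bound with a Cauchy estimate at $0$ and the boundedness of $\Omega$ yields uniform bounds for $\|\varphi_n\|_\infty$ and $\|\varphi_n'\|_\infty$. Arzelà--Ascoli then gives, up to a subsequence, $\varphi_n\to\varphi_\infty$ uniformly on $\overline\D$, together with $\tau_n\to\tau_\infty$, $\sigma_n\to\sigma_\infty$ in $\overline\D$, and $\varphi_\infty(\tau_\infty)=\varphi_\infty(\sigma_\infty)=\zeta$.

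I now distinguish two cases. If $\varphi_\infty$ is non-constant, the maximum principle together with the strong convexity of $\partial\Omega$ (which prevents a non-constant holomorphic disc in $\overline\Omega$ from touching $\partial\Omega$ at an interior point) forces $\varphi_\infty(\D)\subset\Omega$; continuity of the Kobayashi distance on $\Omega\times\Omega$ then shows $\varphi_\infty$ is itself a complex geodesic, whose continuous extension to $\overline\D$ is injective by Theorem~\ref{complexgeo}(2). Hence $\tau_\infty=\sigma_\infty$, and this common value must lie in $\de\D$. If instead $\varphi_\infty\equiv\zeta$, then interior Cauchy estimates applied on compact subsets of $\D$ give $\varphi_n'\to 0$ uniformly on compacts; combined with the uniform Hölder modulus of $\{\varphi_n'\}$ on $\overline\D$, a subsequential Arzelà--Ascoli argument upgrades this to $\varphi_n'\to 0$ uniformly on $\overline\D$.

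In the first case, since $\tau_n,\sigma_n\to\tau_\infty\in\de\D$, the hyperbolic convexity of horodiscs centred at $\tau_\infty$ in $\D$ implies that $\eta_n$ is eventually contained in arbitrarily small horodiscs at $\tau_\infty$, so its Euclidean diameter tends to $0$; then $\gamma_n=\varphi_n(\eta_n)$ has Euclidean diameter tending to $0$ by the uniform Hölder bound, and $\ell(\gamma_n)\leq\|\varphi_n'\|_\infty\cdot|\eta_n|\to 0$, while $\gamma_n\subset B(\zeta,\varepsilon_0)$ for $n$ large since $\varphi_n(\tau_n)=z_n\to\zeta$. In the second case $\varphi_n(\overline\D)\subset B(\zeta,\varepsilon_0)$ eventually, and $\ell(\gamma_n)\leq\|\varphi_n'\|_\infty\cdot|\eta_n|\to 0$ since $|\eta_n|$ is uniformly bounded. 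Either alternative contradicts the choice of $\varepsilon_0$. The main technical point—and the reason the proof hinges on $C^3$ regularity—is the uniform $C^{1/4}$-estimate~(\ref{huang}) for both $\varphi_n$ and $\varphi_n'$ up to the boundary, since this is what controls the behaviour of $\gamma_n$ Euclideanly as the endpoints approach $\zeta$; the delicate step is the trichotomy on $\varphi_\infty$, which relies on the rigidity of complex geodesics in strongly convex domains given by Lempert's theorem.
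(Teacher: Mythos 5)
Your proof follows essentially the same route as the paper's: contradiction via a normal-families argument on the complex geodesics carrying $\gamma_n$, reparametrization into $\mathscr F$, the uniform H\"older estimate \eqref{huang} to extract a $C^1(\overline\D)$-convergent subsequence, the dichotomy between $\varphi_\infty(\D)\subset\Omega$ and $\varphi_\infty\equiv\zeta$ (the paper phrases this as strongly convex domains having ``simple boundary''), and in the first case the injectivity of the boundary extension of $\varphi_\infty$ forcing $\tau_\infty=\sigma_\infty$.

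There is, however, a genuine error in the step that handles the first case. You claim that since $\tau_n,\sigma_n\to\tau_\infty\in\partial\D$, ``the hyperbolic convexity of horodiscs centred at $\tau_\infty$ in $\D$ implies that $\eta_n$ is eventually contained in arbitrarily small horodiscs at $\tau_\infty$.'' This implication would require the endpoints $\tau_n,\sigma_n$ themselves to enter arbitrarily small horodiscs at $\tau_\infty$, and Euclidean convergence to $\tau_\infty$ does not yield that: a tangential sequence in $\D$ converging to a boundary point need not enter small horodiscs (e.g.\ $\tau_n=(1-1/n)e^{i/\log n}\to 1$, for which $h_{1,0}(\tau_n)\to\infty$). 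There is no a priori non-tangentiality constraint on the parameters $\tau_n=\varphi_n^{-1}(z_n)$, so the horodisc argument does not apply. The conclusion itself — that the Euclidean length $\ell(\eta_n)\to0$ — is nevertheless correct, and is what the paper asserts at this point; the cleanest justification is direct plane geometry: $\eta_n$ is a sub-arc of a circle orthogonal to $\partial\D$, whose trace inside $\D$ subtends an angle strictly less than $\pi$ at its center, so $\ell(\eta_n)\leq\frac{\pi}{2}\,|\tau_n-\sigma_n|\to0$. You should replace the horodisc step with this (or an equivalent) argument.

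One further small difference: the paper obtains the containment $\gamma\subset\Omega\cap B(\zeta,\varepsilon)$ directly by citing \cite[Lemma 2.3.64]{Ab}, whereas you derive it as a byproduct of $\ell(\gamma_n)\to0$ and $z_n\to\zeta$ within the same contradiction argument; this is a legitimate alternative.
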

\begin{proof}
The fact that $\gamma$ is contained in $\Omega\cap B(\zeta,\varepsilon)$ for $\delta$ sufficiently small is an immediate consequence of \cite[Lemma 2.3.64]{Ab}.

To prove the statement concerning $\ell(\gamma)$ it is enough to show that, given a sequence $(z_n,w_n)\to (\zeta,\zeta)$ there exist a subsequence (still denoted $(z_n,w_n)$) such that the geodesic segment $\gamma_n\colon [0,a_n]\rightarrow \Omega$ joining $z_{n}$ to $w_{n}$ has euclidean length converging to $0$ as $n\to \infty$. 

 Let $\varphi_n$ be a complex geodesic passing through $z_n$ and $w_n$. 
 Up to composing $\varphi_n$ with an automorphism of $\D$, we may assume that $\varphi_n\in\mathscr F$. 
 Up to passing to a subsequence we have that $\varphi_n\to\varphi_\infty$  uniformly on compact subsets of $\D$. It follows from   (\ref{huang}) that actually $\varphi_n\to\varphi_\infty$ and $\varphi'_n\to\varphi'_\infty$ uniformly on $\overline\D$.
 Since strongly convex domains have simple boundary \cite[Corollary 2.1.14]{Ab}, either $\varphi_\infty\colon \D\rightarrow \Omega$ or $\varphi_\infty\equiv \zeta$.
 Let $\tau_n,\sigma_n\in\D$ be defined by  $\tau_n:=\varphi_n^{-1}(z_n)$ and that $\sigma_n:=\varphi_n^{-1}(w_n)$.

Assume that $\varphi_\infty\colon \D\rightarrow \Omega$. After taking a subsequence of $\varphi_n$ if necessary, we may assume that $\tau_n\to\tau_\infty\in \overline \D$, that $\sigma_n\to \sigma_\infty\in \overline \D$. Clearly $\tau_\infty$ and $\sigma_\infty$ belong to $\partial \D$, and $\varphi_\infty(\tau_\infty)=\varphi_\infty(\sigma_\infty)=\zeta.$
 By the continuity of the Kobayashi distance, it follows that $\varphi_\infty$ is a complex geodesic, and 
since the extension of a complex geodesic to $\overline{\D}$ is  injective,
we  obtain that $\tau_\infty=\sigma_\infty$. 
Let $\eta_n:[0,a_n]\rightarrow \D$ be the geodesic segment connecting $\tau_n$ and $\sigma_n$. Notice that since $\tau_n$ and $\sigma_n$ converge to the same point we must have $\ell(\eta_n)\to 0$. By the uniqueness of real geodesics we have $\gamma_n=\varphi_n\circ \eta_n$ and therefore
$$
\ell(\gamma_n)=\int_0^{a_n}\|\gamma_n'(t)\|\,dt\le \sup_{t\in [0,a_n]}\|\varphi_n'\left(\eta_n(t)\right)\| \ell(\eta_n).
$$
Since the value of $\sup_{\tau\in\D}\Vert\varphi_n'(\tau)\Vert$ is uniformly bounded it follows that $\ell(\gamma_n)\to 0$.

Assume now that  $\varphi_{\infty}\equiv \zeta$. 
Let $\eta_n:[0,a_n]\rightarrow \D$ be the geodesic segment connecting $\tau_n$ and $\sigma_n$. As before
$$
\ell(\gamma_n)\le \sup_{t\in [0,a_n]}\|\varphi_n'\left(\eta_n(t)\right)\| \ell(\eta_n).
$$
Since the euclidan length of geodesics lines in the disk is bounded from above, and $\varphi'_n$ converges uniformly to 0 on $\overline \D$, we have the result.

%
%
%
\end{proof}


We are now ready to prove Proposition \ref{changeofcoordinates}. We denote by $\kappa$ the Kobayashi-Royden metric.
\begin{proof}[Proof of Proposition \ref{changeofcoordinates}] Consider the change of coordinates and the constants $\rho,R>0$ given by Lemma \ref{coordinates}. Given $0<\delta<\rho$ we define the bounded sets $D:=\Omega\cup \B^q$, $D_1:=\B^q$,  $D_0:=\B^q\cap B(e_1,\delta)$.  Then by  \cite[Theorem 2.1]{FR}
we conclude, up to taking a smaller $\delta$ so that $d(\,\cdot\,,\partial D)=d(\,\cdot\,,\partial \B^q)$ on $D_0$, that there exists a constant $c>0$ so that for all  $z\in \B^q\cap B(e_1,\delta)$ and $v\in T_z\mathbb C^q$
$$
\kappa_\Omega(z;v)\ge \kappa_D(z;v)\ge (1-c\,d(z,\partial \B^q))\kappa_{\B^q}(z;v)\geq 
\kappa_{\B^q}(z;v)-c\Vert v\Vert,
$$
where the estimate $d(z,\partial \B^q)\kappa_{\B^q}(z;v)\le\Vert v\Vert$ follows from the definition of Kobayashi-Royden metric.

%

For every $\varepsilon>0$, by the previous lemma we can choose $0<\delta_1<\delta$ so that for every $z,w\in\Omega\cap B(e_1,\delta_1)$ the geodesic segment $\gamma$ for the Kobayashi distance connecting $z$ and $w$ is contained in $\Omega\cap B(e_1,\delta)$ and has  euclidean length $\ell(\gamma)\le\varepsilon/c$ (notice that $\Omega$  is not necessarily strongly convex, but the lemma still holds after a change of coordinate in ${\rm Aut}(\C^q)$). It follows that
$$
k_\Omega(z,w)\ge k_{\B^q}(z,w)-c\,\ell(\gamma)\ge k_{\B^q}(z,w)-\varepsilon.
$$

Given $R,\varepsilon>0$ as above we define choose $R_\varepsilon>0$ as in Lemma \ref{eqregion}. If $k_E$ denotes the Kobayashi distance of the horosphere $E_{\B^q}(0,e_1,R)$ we conclude that 
$$
k_\Omega(z,w)\le k_E(z,w)\le k_{\B^q}(z,w)+\varepsilon,\qquad\forall z,w\in E_{\B^q}(0,e_1,R_\varepsilon).
$$ 
By taking $R_\varepsilon$  smaller if necessary we may further assume that $E_{\B^q}(0,e_1,R_\varepsilon)\subset \Omega\cap B(e_1,\delta_1)$, concluding the proof of the Proposition. 
\end{proof}

\subsection{Proof of T{}heorem \ref{backward}}

Consider the change of coordinates given by Proposition \ref{changeofcoordinates}. We remark that in the new coordinates the domain $\Omega$ is strongly pseudoconvex but not necessarily strongly convex. On the other hand all the properties of strongly convex domains we will use in this last section are invariant under automorphisms  of $\C^q$. 

Given a decreasing sequence $0<\varepsilon_n<1/2$ converging to $0$, by Lemma \ref{changeofcoordinates} we may find another decreasing sequence $R_n>0$ so that, for all $n\geq 0$, we have $E_{\B^q}(0,e_1,R_n)\subset \Omega$ and
\begin{equation} 
\label{3eqregion}
k_{\B^q}(z,w)-\varepsilon_n\le k_\Omega(z,w)\le k_{\B^q}(z,w)+\varepsilon_n,\qquad \forall z,w\in E_{\B^q}(0,e_1,R_n).
\end{equation}

The point $e_1$ is a boundary repelling fixed point for (the conjugate of) the map $f$ with dilation $\lambda:=\lambda_\zeta$. If the map $f$ has no interior fixed point, then its Denjoy-Wolff point $\xi$ does not coincide with $e_1$, and therefore, by taking a smaller  $R_0$ if necessary, we may assume that $\xi\not\in\overline {E_{\B^q}(0,e_1,R_0)}$. On the other hand, if $f$ admits interior fixed points, there exists a limit manifold $\mathcal M$ which is a holomorphic retract of $\Omega$. Thanks to \cite[Proposition 3.4]{AbBr}, we have that $e_1\not\in \overline{ \mathcal M}$.
Hence, by taking a smaller $R_0$ if necessary, we may assume that $\overline{E_{\B^q}(0,e_1,R_0)} \cap \mathcal M =\emptyset$. We conclude that 

\begin{lemma}
\label{birba2}
Every orbit starting in  $E_{\B^q}(0,e_1,R_0)$ eventually leaves the same set. 
\end{lemma}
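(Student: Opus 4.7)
The proof will split according to the dichotomy already used in the discussion preceding the statement: either $f$ has no interior fixed points (and hence a Denjoy--Wolff point $\xi$), or $f$ admits interior fixed points (and hence a limit manifold $\mathcal M$). In both cases the setup guarantees that the limit set of the forward dynamics is disjoint from the closed horosphere $\overline{E_{\B^q}(0,e_1,R_0)}$, so the plan is simply to invoke convergence of iterates and conclude that the orbit cannot stay inside the horosphere.

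In the first case, I would appeal to the Denjoy--Wolff theorem for strongly convex domains (stated in Section \ref{background}): the sequence of iterates $(f^n)$ converges, uniformly on compact subsets of $\Omega$, to the constant map $\xi$. Given any $z_0\in E_{\B^q}(0,e_1,R_0)$ the orbit $(f^n(z_0))$ therefore converges to $\xi$, and since $R_0$ was chosen so that $\xi\notin \overline{E_{\B^q}(0,e_1,R_0)}$, the orbit must leave the set after finitely many steps.

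In the second case, I would use Abate's structure theorem for the elliptic case on a strongly convex domain with smooth boundary: every limit of a subsequence $(f^{n_k})$ is a holomorphic retraction of $\Omega$ onto the limit manifold $\mathcal M$. In particular every accumulation point of the orbit $(f^n(z_0))$ lies in $\mathcal M$. Since $R_0$ was chosen so that $\overline{E_{\B^q}(0,e_1,R_0)}\cap\mathcal M=\emptyset$ and since $\overline{E_{\B^q}(0,e_1,R_0)}$ is a closed subset of $\Omega\cup\{e_1\}$ disjoint from $\overline{\mathcal M}$ (using that $e_1\notin\overline{\mathcal M}$ by \cite[Proposition 3.4]{AbBr}), there exists $\varepsilon>0$ such that every point of $\mathcal M$ has Euclidean distance at least $\varepsilon$ from $\overline{E_{\B^q}(0,e_1,R_0)}$. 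Consequently the orbit $(f^n(z_0))$ must eventually leave $E_{\B^q}(0,e_1,R_0)$.

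There is no real obstacle here: the lemma is essentially a bookkeeping consequence of the construction in Lemma \ref{coordinates} together with the Denjoy--Wolff / limit manifold dichotomy. The only mild care needed is to make sure in the elliptic case that we separate the closed horosphere from the closure of $\mathcal M$ rather than just from $\mathcal M$, which is guaranteed by the fact that $e_1\notin\overline{\mathcal M}$ and that $\overline{E_{\B^q}(0,e_1,R_0)}\setminus\Omega=\{e_1\}$.
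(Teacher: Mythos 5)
Your proof is correct and follows the same route the paper intends: the lemma is stated with the preamble ``We conclude that,'' precisely because the preceding paragraph has already set up the dichotomy (non-elliptic with $\xi\notin\overline{E_{\B^q}(0,e_1,R_0)}$; elliptic with $\overline{E_{\B^q}(0,e_1,R_0)}\cap\mathcal M=\emptyset$), and you have simply spelled out the two convergence arguments that the paper leaves implicit. The only detail worth flagging is that in the elliptic case one should note (as you implicitly do, by invoking that accumulation points lie in $\mathcal M\subset\Omega$) that orbits of an elliptic map are relatively compact in $\Omega$, so the accumulation set is a compact subset of $\mathcal M$, which together with the compactness of $\overline{E_{\B^q}(0,e_1,R_0)}$ gives the positive separation you need.
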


Set $\widetilde R:=\lambda e$,  $r_n:=R_n/\widetilde R$ and choose $z_n\in E_{\B^q}(0,e_1,r_n)$. Since $r_n<R_n<R_0$, we have $z_n\in\Omega$, and it is easy to see using  \eqref{differentpole}  that we have the two following chains of strict inclusion:
\begin{equation}\label{catenauno}
 E_{\B^q}(0,e_1,R_n)\supset E_{\B^q}(0,e_1,r_n)\supset E_{\B^q}(z_n,e_1,1).
\end{equation} 
\begin{equation}\label{catenadue}
 E_{\B^q}(0,e_1,R_n) \supset E_{\B^q}(z_n,e_1,\tilde R)\supset E_{\B^q}(z_n,e_1,1).
\end{equation}

Since  $z_n\in\Omega$ there exists a unique complex geodesic $\varphi_n$ of the domain $\Omega$ so that $\varphi_n(0)=z_n$ and $\varphi_n(1)=e_1$.
As a consequence of \cite[Lemma 3.5]{GS},
 the restriction $\alpha_n\colon[0,1)\to \C^q$ of $\varphi_n$ extends $C^1$-smoothly to the  closed interval $[0,1]$ and 
 $\alpha'(1)\not\in T_{e_1}\partial\Omega,$
 hence as the real number $t$ increases to $1$, the point $\varphi_n(t)$ converges to $e_1$ non-tangentially.

 It follows that every real $t$ sufficiently close to $1$ we have $\varphi_n(t)\in E_{\B^q}(0,e_1,r_n)$. 
 After rescaling the complex geodesic $\varphi_n$ via an appropriate automorphism of the unit disk and eventually replacing $z_n$ with $\varphi_n(0)$, we may therefore assume that $\varphi_n([0,1))\subset E_{\B^q}(0,e_1,r_n)$.

We now show that $\varphi_n\left([t_0,1)\right)\subset  E_{\B^q}(z_n,e_1,1)$, where $t_0:=\frac{e-1}{e+1}$. 
Indeed, if $0<t<t'<1$ by \eqref{3eqregion} we obtain that
\begin{align*}
 \log h^{\B^q}_{e_1,z_n}(\varphi_n(t))&=\lim_{\B^q\ni w\to e_1}[k_{\B^q}(\varphi_n(t),w)-k_{\B^q}(z_n,w)]\\
&=\lim_{t'\to 1}[k_{\B^q}(\varphi_n(t),\varphi_n(t'))-k_{\B^q}(z_n,\varphi_n(t'))]\\
&\le \lim_{t'\to 1}[k_\Omega(\varphi_n(t),\varphi_n(t'))-k_\Omega(z_n,\varphi_n(t'))]+2\varepsilon_n\\
&\le -k_{\Omega}(z_n,\varphi_n(t)) + 2\varepsilon_n\\
&<-\log\frac{1+t}{1-t}+1,
\end{align*}
where we used the fact that the three points $\varphi_n(t),\varphi_n(t')$ and $z_n$ lie on the same real geodesic of $\Omega$. Notice that we could use \eqref{3eqregion} since $\varphi_n(t),\varphi_n(t')$ and $z_n$ all belong to $E_{\B^q}(0,e_1,R_n)$.

Choose an increasing sequence $t_0<t_k<1$, converging to $1$. 
Since every orbit starting from a point in $E_{\B^q}(0,e_1,R_0)$ eventually leaves the same set, it follows that for every $n\ge 0$ we may define $m_{n,k}$ as the first positive integer so that $f^{m_{n,k}}\circ\varphi_n(t_k)\not\in  E_{\B^q}(z_n,e_1,1)$.
\begin{lemma}\label{jojorabbit}
There exists $n\in\mathbb N$ so that $f^{m_{n,k}}\circ\varphi_n(t_k)$ has a convergent subsequence in $\Omega$.
\end{lemma}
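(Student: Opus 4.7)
I will argue by contradiction and reach the conclusion via Proposition~\ref{limits}. Suppose that for every $n\in\mathbb{N}$ the sequence $(y_{n,k})_k=(f^{m_{n,k}}\circ\varphi_n(t_k))_k$ has no subsequence converging in $\Omega$; after extracting, $y_{n,k}\to\eta_n\in\partial\Omega$. The strategy is to first show $\eta_n=e_1$, then confine both $x_{n,k}:=f^{m_{n,k}-1}(\varphi_n(t_k))$ and $y_{n,k}$ to the horosphere $E_{\B^q}(0,e_1,R_n)$ for $k$ large so that $k_\Omega$ and $k_{\B^q}$ can be interchanged up to $\varepsilon_n$, and finally invoke Proposition~\ref{limits} in $\B^q$ to obtain a subsequence of $(y_{n,k})$ converging to a point of $\Omega$, the desired contradiction.

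\textbf{Steps 1--2 (localization and horosphere control).} Lemma~\ref{AbateRaissy} applied to $\varphi_n$ (which ends at $e_1$) gives $k_\Omega(a_{n,k},f(a_{n,k}))\to \log\lambda$ as $k\to\infty$. Since $(k_\Omega(f^j(z),f^{j+1}(z)))_j$ is non-increasing, $\limsup_k k_\Omega(x_{n,k},y_{n,k})\le \log\lambda$, so $x_{n,k}$ and $y_{n,k}$ share all cluster points in $\overline\Omega$. Because $x_{n,k}\in E_{\B^q}(z_n,e_1,1)\subset E_{\B^q}(0,e_1,R_n)\subset \Omega$ and the closure of the small $\B^q$-horosphere $E_{\B^q}(0,e_1,R_n)$ meets $\partial\Omega$ only at $e_1$ (it is tangent to $\partial\B^q$ at $e_1$, with the rest of its boundary lying inside $\Omega$), I conclude $\eta_n=e_1$. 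To trap $y_{n,k}$ in $E_{\B^q}(0,e_1,R_n)$ I will combine Julia's Lemma for $\Omega$ (Proposition~\ref{julialem}) with the comparison of $\Omega$- and $\B^q$-horospheres at $e_1$ obtained from \eqref{3eqregion} by letting $w\to e_1$ in the definition of $h_{e_1,z_n}$, which yields
$$
h^{\B^q}_{e_1,z_n}(y_{n,k}) \;\le\; \lambda\, e^{2\varepsilon_n} \;<\; \widetilde R
$$
as soon as $\varepsilon_n<1/2$. Hence $y_{n,k}\in E_{\B^q}(z_n,e_1,\widetilde R)$, and \eqref{catenadue} gives $y_{n,k}\in E_{\B^q}(0,e_1,R_n)\subset\Omega$.

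\textbf{Step 3 and main obstacle.} With both $x_{n,k}$ and $y_{n,k}$ in $E_{\B^q}(0,e_1,R_n)$, estimate \eqref{3eqregion} upgrades the $k_\Omega$-bound to $k_{\B^q}(x_{n,k},y_{n,k})\le \log\lambda+\varepsilon_n+o(1)$. Passing to subsequences, both limits $L:=\lim_k k_{\B^q}(x_{n,k},y_{n,k})$ and $L':=\lim_k[k_{\B^q}(z_n,x_{n,k})-k_{\B^q}(z_n,y_{n,k})]$ exist in $[0,\log\lambda+1]$. Proposition~\ref{limits}, applied with pole $z_n$ (by $\mathrm{Aut}(\B^q)$-invariance of $k_{\B^q}$), center $\zeta=e_1$ and radius $R=1$, to $u_k=x_{n,k}$ (inside the horosphere) and $v_k=y_{n,k}$ (outside), will force $(y_{n,k})$ to be relatively compact in $\B^q$; combined with Step~2 this places a cluster point inside $E_{\B^q}(0,e_1,R_n)\subset\Omega$, contradicting $\eta_n=e_1\in\partial\Omega$. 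The main obstacle is verifying the hypotheses of Proposition~\ref{limits}, namely $L>0$ and $L=L'$. The triangle inequality only gives $L'\le L$; the reverse inequality encodes the asymptotic alignment of $y_{n,k}$ along the $\B^q$-geodesic from $z_n$ to $e_1$, which has to be extracted from the first-exit stopping rule together with the asymptotic equality in Julia's Lemma at the repelling fixed point, in the spirit of the ball case treated in \cite{ArGu}.
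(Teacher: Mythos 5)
Your high-level plan is the same as the paper's: argue by contradiction, force $x_{n,k}$ and $y_{n,k}$ to converge to $e_1$, transfer to the ball via the comparison \eqref{3eqregion}, rescale with an automorphism fixing $e_1$, and contradict Proposition~\ref{limits}. However, two steps that you treat as essentially done contain genuine gaps.

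First, your Step~2 is circular. You want to show $y_{n,k}\in E_{\B^q}(0,e_1,R_n)$ by estimating $h^{\B^q}_{e_1,z_n}(y_{n,k})$, and to do so you invoke the comparison \eqref{3eqregion} between $k_\Omega$ and $k_{\B^q}$ — but that comparison is only valid \emph{for points already known to lie in} $E_{\B^q}(0,e_1,R_n)$, which is precisely what you are trying to establish for $y_{n,k}$. Julia's lemma on $\Omega$ gives a bound on the $\Omega$-horosphere function $h^\Omega_{e_1,z_n}(y_{n,k})$, not the $\B^q$ one, and you cannot transfer it without already being inside the small ball-horosphere. The paper closes this gap with a connectedness argument along the Kobayashi geodesic $\gamma_{n,k}$ from $x_{n,k}$ to $y_{n,k}$: while a point of $\gamma_{n,k}$ is still in $E_{\B^q}(z_n,e_1,\widetilde R)$ (hence, by \eqref{catenadue}, in $E_{\B^q}(0,e_1,R_n)$ where \eqref{3eqregion} applies), the estimate shows $\log h^{\B^q}_{e_1,z_n}(\gamma_{n,k}(t)) < \log\widetilde R - \varepsilon_n$ strictly, so the geodesic can never exit $E_{\B^q}(z_n,e_1,\widetilde R)$, and in particular $y_{n,k}$ lands inside it. This continuity argument is what makes the local comparison usable in a non-circular way, and it is missing from your write-up.

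Second, you correctly identify the main obstacle — verifying the alignment condition $L=L'$ (and $L>0$) required by Proposition~\ref{limits} — but you do not actually resolve it; you only gesture at "the spirit of the ball case." In the paper this is the sandwich \eqref{doubleconv}: under the contradiction hypothesis $x_{n,k}\to e_1$, the very definition of the dilation (Definition~\ref{defdil}) gives $\liminf_k[k_\Omega(z_n,x_{n,k})-k_\Omega(z_n,y_{n,k})]\ge\log\lambda$, while Lemma~\ref{AbateRaissy} applied to the complex geodesic $\varphi_n$ gives $\limsup_k k_\Omega(x_{n,k},y_{n,k})\le\log\lambda$; the triangle inequality sandwiches everything to $\log\lambda$, delivering exactly $L=L'=\log\lambda>0$ (up to $\varepsilon_n$ errors after passing to $k_{\B^q}$). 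This step must be done \emph{before} the horosphere trapping, since the resulting inequality \eqref{killbill} is what feeds the geodesic estimate. So your proposal has the right target statement and the right tool, but the two load-bearing arguments — the non-circular trapping and the alignment identity — are not supplied.
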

\begin{proof}
Since $\varphi_n(t_k)\in E_{\B^q}(z_n,e_1,1)$, we must have $m_{n,k}\ge 1$, and thus we may write
\begin{align*}
x_{n,k}&:=f^{m_{n,k}-1}\circ\varphi_n(t_k)\in E_{\B^q}(z_n,e_1,1)\\
y_{n,k}&:=f^{m_{n,k}}\circ\varphi_n(t_k)\not\in E_{\B^q}(z_n,e_1,1),
\end{align*}
By Lemma \ref{AbateRaissy}, for every $n\in\mathbb N$ we have
\begin{equation}\label{1917}
\limsup_{k\to\infty}k_\Omega(x_{n,k},y_{n,k})\le\lim_{k\to\infty}k_\Omega(\varphi_n(t_k),f\circ \varphi_n(t_k))=\log\lambda.
\end{equation}
Suppose now that the statement of  Lemma \ref{jojorabbit} is false. Then for every fixed  $n$ we would have that the sequences $(x_{n,k})$ and $(y_{n,k})$ both converge to $e_1$. Indeed, for the sequence $(x_{n,k})$, which is contained in  $E_{\B^q}(z_n,e_1,1)$, this follows from 
\begin{equation}
\label{birba}
\overline{E_{\B^q}(z_n,e_1,1)}\setminus\{e_1\}\subset E_{\B^q}(0,e_1,R_0)\subset \Omega.
\end{equation}
Since  $k_\Omega(x_{n,k}, y_{n,k})$ is bounded,  it  follows that  $(y_{n,k})$ converges to $e_1$ too. This is a direct consequence of \cite[Corollary 2.3.55]{Ab} and the fact that $\Omega$ is biholomorphic to a bounded strongly convex domain via an automorphism of $\C^q$.

By Definition \ref{defdil} and by \eqref{1917} we have that
\begin{align*}
\log\lambda&\le \liminf_{k\to\infty}k_\Omega(z_n,x_{n,k})-k_\Omega(z_n,y_{n,k})\\
&\le \liminf_{k\to\infty}k_\Omega(x_{n,k},y_{n,k})\\
&\le \limsup_{k\to\infty}k_\Omega(x_{n,k},y_{n,k})\\
&\le\log\lambda,
\end{align*}
proving that $\lim_{k\to\infty}k_\Omega(x_{n,k},y_{n,k})=\log\lambda$. It is now easy to show that
\begin{equation}
\label{doubleconv}
\lim_{k\to\infty}k_\Omega(z_n,x_{n,k})-k_\Omega(z_n,y_{n,k})= \lim_{k\to\infty}k_\Omega(x_{n,k},y_{n,k})=\log\lambda.
\end{equation}

Since $\log \tilde R=\log\lambda+1$, for all $n\geq 0$ we may choose a positive integer $k_n$ so that for all $k\geq k_n$,
\begin{equation}\label{killbill}
k_\Omega(x_{n,k},y_{n,k})<\log \widetilde R-2\varepsilon_n.
\end{equation}

We now show that
\begin{equation}\label{tuttinellorosfera}
z_n,x_{n,k},y_{n,k}\in E_{\B^q}(0,e_1,R_n),\qquad \forall k\ge k_n.
\end{equation}
This is clear for $z_n$ and $x_{n,k}$ by \eqref{catenauno}.
Let $\gamma_{n,k}:[0,a_{n,k}]\rightarrow \Omega$ be the geodesic segment connecting $x_{n,k}$ and $y_{n,k}$. 
Clearly for all $t\in [0,a_{n_k})$ we have  $k_{\Omega}(x_{n,k},\gamma_{n,k}(t))<k_{\Omega}(x_{n,k},y_{n,k})$.
 By \eqref{catenadue}, as long as $\gamma_{n,k}(t)\in E_{\B^q}(z_n,e_1,\widetilde R)$  we have that
\begin{align*}
\log h^{\B^q}_{e_1,z_n}(\gamma_{n,k}(t))&=\lim_{w\to e_1}[k_{\B^q}(\gamma_{n,k}(t),w)-k_{\B^q}(z_n,w)]\\
&\le \lim_{w\to e_1}[k_{\B^q}(x_{n,k},w)-k_{\B^q}(z_n,w)]+k_{\B^q}(x_{n,k},\gamma_{n,k}(t))\\
&<k_\Omega(x_{n,k},\gamma_{n,k}(t))+\varepsilon_n\\
&<\log\widetilde R-\varepsilon_n,
\end{align*}
where we used \eqref{3eqregion},\eqref{killbill} and the fact that $x_{n,k}\in E_{\B^q}(z_n,e_1,1)$. We conclude that  $\gamma_{n,k}(t)\in E_{\B^q}(z_n,e_1,\widetilde R)$ for every $t\in [0,a_{n,k}]$, and thus \eqref{tuttinellorosfera} follows.

Thus, using  \eqref{3eqregion} and  \eqref{doubleconv}, we obtain for all $k\geq k_n$,
\begin{align*}
\log\lambda-2\varepsilon_n&\le\liminf_{k\to\infty} k_{\B^q}(z_n,x_{n,k})-k_{\B^q}(z_n,y_{n,k})\\
&\le\limsup_{k\to\infty}k_{\B^q}(x_{n,k},y_{n,k})\\
&\le \log\lambda+\varepsilon_n
\end{align*}

Let $\sigma_n\in \textrm{Aut}(\B^q)$ be such that $\sigma_n(z_n)=0$ and $\sigma_n(e_1)=e_1$. Notice that for every $n$ the sequences $\sigma_n(x_{n,k}),\sigma_n(y_{n,k})\to e_1$ as $k\to\infty$. We may therefore choose a sequence $K_n\ge k_n$ so that $x_n':=\sigma_n(x_{n,K_n})\to e_1$, $y_n':=\sigma_n(y_{n,K_n})\to e_1$ and 
$$
\log\lambda-3\varepsilon_n\le  k_{\B^q}(0,x'_n)-k_{\B^q}(0,y'_n)\le k_{\B^q}(x'_n,y'_n)\le\log\lambda+2\varepsilon_n.
$$
Finally notice that $x_n'\in E_{\B^q}(0,e_1,1)$ and that $y_n'\not\in E_{\B^q}(0,e_1,1)$, which contradicts Proposition \ref{limits} since $\varepsilon_n\to 0$.
\end{proof}

By the previous lemma there exists $n\in\mathbb N$, which from now on will be fixed, such that, up to passing to a subsequence of $t_k$ if necessary, the sequence $f^{m_{n,k}}\circ\varphi_n(t_k)\to z_0\in\Omega$. By Lemma \ref{AbateRaissy} we have that for all $j\in \N$ there exists $C_j>0$ such that 
$$k_\Omega(\varphi_n(t_k),f^j\circ\varphi_n(t_k))\le C_j, \quad \forall k\geq 0.$$
Therefore, since $\varphi_n(t_k)\to e_1$, the sequence $m_{n,k}$ is divergent.

The remaining of the proof is similar to \cite{ArGu} and \cite{O}, but we add it for the convenience of the reader.
Consider the sequence $(f^{m_{n,k}-1}\circ\varphi_n(t_k))$.
Since $$k_\Omega(f^{m_{n,k}}\circ\varphi_n(t_k), f^{m_{n,k}-1}\circ\varphi_n(t_k))\leq k_\Omega (f\circ\varphi_n(t_k),\varphi_n(t_k))\to\log\lambda,$$
we can extract a subsequence $k_1(h)$ such that
$f^{m_{n,k_1(h)}-1}\circ\varphi_n(t_{k_1(h)})\to z_1\in \Omega.$
Iterating this procedure, we obtain for every $\nu\geq 1$ a subsequence $k_{\nu+1}(h)$ of $k_\nu(h)$ such that
$$
f^{m_{n,k_{\nu+1}(h)}-\nu-1}\circ\varphi_n(t_{k_{\nu+1}(h)})\to z_{\nu+1}\in \Omega,
$$
and $f(z_{\nu+1})=z_{\nu}.$
Hence $(z_\nu)$ is a backward orbit. 
 
We now show that $z_\nu\to e_1$.
Recall that $f^{m_{n,k}-\nu}\circ\varphi_n(t_k)\in E_{\B^q}(z_n,e_1,1)$, which implies that $z_\nu\in \overline{E_{\B^q}(z_n,e_1,1)}\setminus\{e_1\}$. Therefore either $z_\nu\to e_1$ or there exists a subsequence $z_{\nu_m}\to z'\in \overline{E_{\B^q}(z_n,e_1,1)}\setminus\{e_1\}\subset\Omega$. In the second case for every $i\in\mathbb N$ we have that
$$
f^i(z')=\lim_{m\to\infty}f^i(z_{\nu_m})=\lim_{m\to\infty}z_{\nu_m-i}\in \overline{E_{\B^q}(z_n,e_1,1)}\setminus\{e_1\},
$$
and thus by \eqref{birba} it follows that the orbit of the point $z'$ is contained in $E_{\B^q}(0,e_1,R_0)$, contradicting Lemma \ref{birba2}.

We are left with showing that the step of $(z_\nu)$ is $\log{\lambda}$. Let $p\in \Omega$. We have that
$$k_\Omega(z_\nu,z_{\nu-1})= k_\Omega(z_\nu,f(z_\nu)) \geq   k_\Omega(p,z_\nu)-k_\Omega(p,f(z_\nu)),   $$
and since  $z_\nu\to e_1$, it follows that  $s_1(z_\nu)\geq \log\lambda$.
Moreover,
\begin{align*}
k_{\Omega}(z_{\nu},z_{\nu-1})&=\lim_{h\to\infty}k_{\Omega}(f^{m_{n,k_{\nu}(h)}-\nu}\circ\varphi_n(t_{k_{\nu}(h)}),f^{m_{n,k_{\nu}(h)}-\nu+1}\circ\varphi_n(t_{k_{\nu}(h)})) \\&\leq \lim_{h\to\infty} k_{\Omega}(\varphi_n(t_{k_{\nu}(h)}),f\circ\varphi_n(t_{k_{\nu}(h)}))\\
&= \log\lambda.
\end{align*}

This ends the proof of Theorem \ref{backward}.

\bibliographystyle{amsplain}

\end{document}